\theoremstyle{plain}
\newtheorem{thm}{Theorem}[section]
\newtheorem{lemma}[thm]{Lemma}
\newtheorem{cor}[thm]{Corollary}
\newtheorem{prop}[thm]{Proposition}
\theoremstyle{definition}
\newtheorem{defn}[thm]{Definition}
\newtheorem{remark}[thm]{Remark}
\newtheorem{Open questions}[thm]{Open questions}
\newtheorem{Open question}[thm]{Open question}
\newtheorem{Open problems}[thm]{Open problems}
\newtheorem{Open problem}[thm]{Open problem}
\definecolor{magenta}{rgb}{.5,0,.5} 
\definecolor{dred}{rgb}{.5,0,0} 
\definecolor{green}{rgb}{0,.5,0} 
\definecolor{blue}{rgb}{0,0,0.5} 
\definecolor{black}{rgb}{0,0,0} 
\definecolor{vdgreen}{rgb}{0,.3,0} 
\definecolor{vdred}{rgb}{.3,0,0} 
\definecolor{red}{rgb}{1,0,0} 
\definecolor{orange}{rgb}{1.00,0.50,0}
\def\Bbb{\mathbb}
\def\bar{\overline}
\def\R{\Bbb{R}}
\def\Z{\Bbb{Z}}
\def\N{\Bbb{N}}
\renewcommand{\N}{\Bbb{N}}
\def\ni{\noindent}
\def\Sol{\hbox{\rm Sol}}
\def\F+L{\hbox{$\textup{F}\!_+\textup{L}$}}
\def\ssm{\smallsetminus}
\renewcommand{\H}{\mathcal{H}}
\newcommand{\0}{\parbox{10pt}{0}}
\def\ms{\medskip}
\def\onto{{\kern3pt\to\kern-8pt\to\kern3pt}}
\def\<{\langle}
\def\>{\rangle}
\def\|{{\ |\ }}
\def\z{\mathbb{Z}}
 \def\e{\mathbf{e}}
\newcommand{\set}[1]{\left\{#1\right\}}
\newcommand{\abs}[1]{\left|#1\right|}
\renewcommand{\ni}{\noindent}
\renewcommand{\ms}{\medskip}
\def\*{^{\star}}
\newcommand{\SB}[2]{\mbox{\footnotesize{$\left(\!\!\!\begin{array}{c}{#1} \\
{#2}\end{array}\!\!\!\right)$}}}
\newcommand{\T}[3]{\mbox{${}_{#2} {}^{#1} {}_{#3}$}}
\begin{document}

\title[{Lamplighters, metabelian groups, and horocyclic products of trees}
]{Lamplighters, metabelian groups, \\ and horocyclic products of trees }

\author{Margarita Amchislavska and Timothy Riley}

\date \today

\begin{abstract}
\ni  
Bartholdi, Neuhauser and Woess proved that a family of metabelian groups including lamplighters  have a striking geometric manifestation as $1$-skeleta of  horocyclic products of trees.   The purpose of this article is to give an elementary account of this result, to widen the family addressed to include the infinite valence case (for instance $\Z \wr \Z$), and to make the translation between the algebraic and geometric descriptions explicit.  

In the  rank-2 case, where the groups concerned include a celebrated example of Baumslag and Remeslennikov, we give the translation by means of a combinatorial `lamplighter description'.  This elucidates our proof in the general case which proceeds by manipulating polynomials.

Additionally, we show that the  Cayley 2-complex of a suitable presentation of Baumslag and Remeslennikov's example is  a horocyclic product of three trees. 

 \ms

\footnotesize{\ni \textbf{2010 Mathematics Subject
Classification:  20F05, 20F16, 20F65}  \\ \ni \emph{Key words and phrases:} horocyclic product, lamplighter, metabelian}
\end{abstract}

\maketitle

 \section{Introduction}

Our conventions throughout will be $[a,b] = a^{-1}b^{-1} ab$ and $a^{nb} = b a^n b^{-1}$ for group elements $a$, $b$ and integers $n$.  Our group actions are on the right. 

\subsection{The original lamplighter group $(\Z/2\Z)\wr\Z$}

Denote \mbox{$(\Z/2\Z)\wr\Z$} by $\Gamma_1(2)$. As an abelian group the ring $(\Z/2\Z)[x,x^{-1}]$ is isomorphic to the additive group $\bigoplus_{i \in \Z} (\Z/2\Z)$ of finitely supported sequences of zeros and ones.  By definition  \mbox{$\Gamma_1(2) = \bigoplus_{i \in \Z} (\Z/2\Z)\rtimes \Z$}, and so can also  be expressed as $(\Z/2\Z)[x,x^{-1}]\rtimes \Z$, and this provides a convenient description of the action of the $\z$-factor, namely a generator of  the   $\Z$-factor  acts on $(\Z/2\Z)[x,x^{-1}]$ by multiplication by $x$. 

Elements  $\left( \sum_{j \in \Z}  f_{j} x^{j}, k \right) \in \Gamma_1(2)$ can be visualized as a street (the real line) with lamps at all integer locations, a lamplighter  located by lamp $k$, and, for each $f_j=1$, the lamp at $j$ is lit. We will call this the lamplighter model for $\Gamma_1(2)$. The identity element $(0,0)$ corresponds to all lights being turned off and the lamplighter at location $0$. Figure~\ref{element of gamma_1(2)} illustrates $(x^{-4}+1+x+x^3,5)\in\Gamma_1(2)$.

\begin{figure}[ht]
\centering
\includegraphics[scale=0.6]{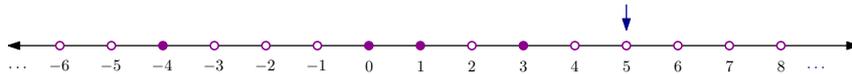}
\caption{An element $(x^{-4}+1+x+x^3,5)$ of $\Gamma_1(2)$. The lamps at positions $-4,0,1$ and $3$ are turned on and the lamplighter is standing by the lamp at location $5$.}
\label{element of gamma_1(2)}
\end{figure}

As we will show in Section~\ref{presentations section},
$$\left\langle  \ a, t \ \left| \ a^2=1,   \left[a,a^{t^k}\right]=1 \ (k \in \Z)  \  \right. \right\rangle$$
is a presentation for $\Gamma_1(2)$. 

Elements of $\Gamma_1(2)$ expressed as words on $a$ and $t$ can be visualized on the lamplighter model above by starting with the model for the identity element, reading off one letter at a time from left to right: upon reading $t$ we move the lamplighter one unit to the right (hence upon reading $t^{-1}$ we move one unit to the left), and upon reading $a^{\pm 1}$ we flip the switch on the lamp at which the lamplighter is currently located. For example, both $t^{-4}at^{4}atat^2at^2$ and $at^{-1}at^4at^{-7}at^3at^{2}at^4$ express the element pictured in Figure~\ref{element of gamma_1(2)}.

\subsection{Cayley graphs}

The \emph{Cayley graph} of a group $G$ with respect to a generating set $A$ is the graph which has elements of $G$ as its vertex set and, for every $g \in G$ and $a \in A$, has a directed edge   labeled $a$ from $g$ to $ga$.  
The presentation complex of a finitely presented group $G=\langle A ~|~R\rangle$ denoted by $P_G$ is a 2-dimensional cell complex which has a single vertex, one loop at the vertex for each generator of $G$, and one $2$-cell for each relation in the presentation glued along the corresponding edge-loop. The universal cover $\widetilde{P_G}$ of $P_G$ is called the \emph{Cayley $2$-complex} of $G$, and the $1$-skeleton of $\widetilde{P_G}$ gives the Cayley graph of $G$ with respect to this presentation. 

A group acts \emph{geometrically} on a metric space if the action is cocompact, by isometries, and properly discontinuous (that is, every two points have neighbourhoods such that only finitely many group elements translate one neighbourhood in such a way that it intersects the other).  For example, the action of a group $G$ on itself by left-multiplication naturally extends to such an action on a Cayley graph that is defined using a finite generating set.

\subsection{A primer on horocyclic products of trees}

Part of the infinite binary tree $\mathcal{T}_{\z/2\z}$ with every vertex having valence $3$ and equipped with a height function $h$ is shown in Figure~\ref{H_1(2)}. A horocyclic product is constructed from two copies of $\mathcal{T}_{\z/2\z}$ by taking the subset 
$$ \left. \mathcal{H}_1(\z/2\z):= \set{ \, (p_0, p_1) \in \mathcal{T}_{\z/2\z}\times \mathcal{T}_{\z/2\z} \, \right| \,  h(p_0)+h(p_1) =0 \, }$$
of $\mathcal{T}_{\z/2\z} \times \mathcal{T}_{\z/2\z}$. 
In Section~\ref{Horocyclic product defn section} we will give precise definitions and will generalize this construction to products of $n+1$ trees by taking  the subset of $(n+1)$-tuples of points in the tree whose  heights sum  to zero.  

\begin{figure}[ht!]
\centering
\includegraphics[scale=0.85]{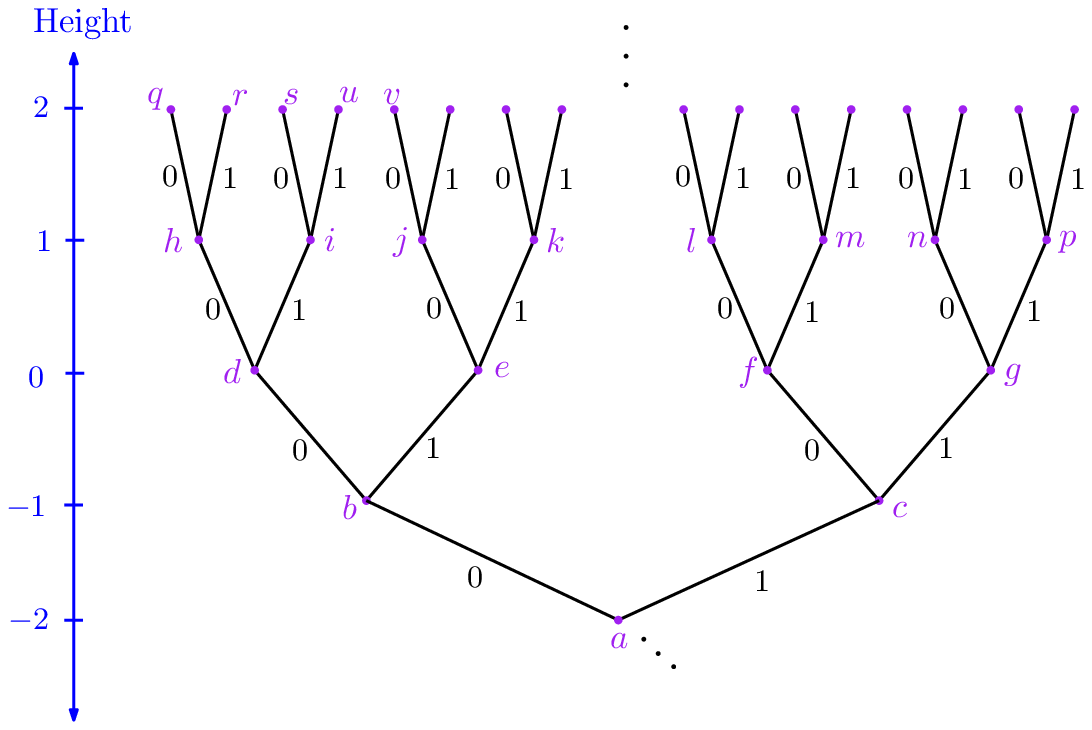}
\caption[A part of an infinite binary tree with a height function.]{A part of an infinite binary tree with a height function. Vertices in $\mathcal{H}_1(\z/2\z)$ include $(d,d)$, $(d,e)$, $(d,f)$, $(e,d)$, $(j,c)$, $(v,a)$, and $(a,u)$. Edges in $\mathcal{H}_1(\z/2\z)$ include $\big\{(d,e), (h,b)\big\}$, $\big\{(d,e), (b,k)\big\}$, and $\big\{(u,a), (i,c)\big\}$.}
\label{H_1(2)}
\end{figure}

The starting point for this article is that this striking generic construction turns out to give a Cayley graph of $\Gamma_1(2)$ --
\begin{prop} \label{starting prop}
The Cayley graph of $\Gamma_1(2)$ with respect to the generating set $\{ a, at \}$ is $\mathcal{H}_1(\z/2\z)$.
\end{prop}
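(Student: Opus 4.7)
The plan is to construct an explicit graph isomorphism $\Phi : \Gamma_1(2) \to \mathcal{H}_1(\Z/2\Z)$ via an algebraic parametrisation of the tree vertices. I would identify the vertices of $\mathcal{T}_{\Z/2\Z}$ at height $n \in \Z$ with elements of $R_{\geq n} := \bigoplus_{i \geq n} (\Z/2\Z) x^i \subset (\Z/2\Z)[x, x^{-1}]$: the unique parent of $p \in R_{\geq n}$ is obtained by truncating its coefficient at position $n$, and the two children at height $n-1$ arise by adjoining a fresh coefficient $\epsilon \in \{0,1\}$ at position $n-1$. A brief check confirms this realises $\mathcal{T}_{\Z/2\Z}$ as the promised $3$-valent tree with height function $h$.

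Next, I would define $\Phi$ by sending $(f, k) \in \Gamma_1(2)$ to the pair $(p_0, p_1)$ with
\[
p_0 := \sum_{i \geq k} f_i\, x^i, \qquad p_1 := \sum_{i < k} f_i\, x^{-i-1},
\]
regarding $p_0$ as a height-$k$ vertex of the first tree and $p_1$ as a height-$(-k)$ vertex of the second. Since these heights sum to zero, $(p_0, p_1) \in \mathcal{H}_1(\Z/2\Z)$. Bijectivity is immediate from the direct sum decomposition $(\Z/2\Z)[x, x^{-1}] = R_{\geq k} \oplus R_{<k}$ and the bijection $i \mapsto -i-1$ between $\{i : i < k\}$ and $\{j : j \geq -k\}$.

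The main step is then matching edges on both sides. Using the semidirect-product multiplication $(f, k)(g, \ell) = (f + x^k g, k + \ell)$, a direct computation shows, for instance, that right-multiplication by $at$ sends $(f, k) \mapsto (f + x^k, k+1)$, whose image under $\Phi$ has first coordinate $f_{\geq k+1}$ (the parent of $p_0$) and second coordinate $p_1 + (f_k + 1)\, x^{-k-1}$ (a child of $p_1$); hence this corresponds to an $\mathcal{H}_1$-edge of the expected ``parent of $p_0$, child of $p_1$'' type. An analogous computation for $(at)^{-1}$, which sends $(f,k) \mapsto (f + x^{k-1}, k-1)$, produces an edge of the dual type. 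The generator $a$ is treated by a similar, if slightly more intricate, analysis, using the interaction between the flip of $f_k$ and the tree adjacencies at both coordinates.

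The main obstacle is this last step: matching the three distinct generator-moves out of $(f, k)$ (after accounting for $a^2 = 1$) with the corresponding tree-adjacencies out of $\Phi(f, k)$ in $\mathcal{H}_1(\Z/2\Z)$. The pairing is governed by how flipping the coefficient $f_k$ simultaneously shifts the boundary between $p_0$ and $p_1$ and toggles which of the two available children one descends to in the relevant tree. Everything reduces to careful local bookkeeping using the decomposition $f = f_{\geq k} + f_{<k}$ at the lamplighter's position, comparing parent/child moves in the trees with lamp-flip and lamplighter-move descriptions on the group side.
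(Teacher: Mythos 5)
Your construction of $\Phi$ --- splitting the Laurent polynomial $f$ at the lamplighter's position $k$ into an upper part $p_0$ and a reflected lower part $p_1$, and then matching generator-moves against parent/child moves in the two trees --- is exactly the bijection used in the paper's own proof (the $n=1$ case of Theorem~\ref{main} in Section~\ref{rank 1}), and your computations for $at$ and $(at)^{-1}$ are correct. (One cosmetic mismatch: your trees branch downwards, with one parent above and two children below each vertex, whereas in the paper each vertex of $\mathcal{T}_R$ has one downward and $\abs{R}$ upward edges; since negating both heights simultaneously is an isomorphism of the horocyclic product, this is harmless.)

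The genuine gap is the step you defer: the generator $a$ cannot be ``treated by a similar, if slightly more intricate, analysis.'' Right-multiplication by $a=(1,0)$ sends $(f,k)$ to $(f+x^k,k)$, leaving $k$ unchanged, so $\Phi(g)$ and $\Phi(ga)$ have the same height in each coordinate: $p_1$ is literally unchanged, and $p_0$ moves to its sibling $p_0+x^k$, a vertex at tree-distance $2$. But every edge of $\mathcal{H}_1(\Z/2\Z)$ joins vertices whose heights differ by $1$ in each coordinate, so there is no edge of $\mathcal{H}_1(\Z/2\Z)$ for the $a$-edge of the Cayley graph to map to; no local bookkeeping can repair this. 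More bluntly, since $a^2=1$ the Cayley graph with respect to $\{a,at\}$ is $3$-regular (the neighbours of $g$ are $ga$, $gat$, and $gt^{-1}a$), while $\mathcal{H}_1(\Z/2\Z)$ is $4$-regular. The statement must be read with the generating set $\{t,at\}$, which is the set $\{\lambda_0,\lambda_1\}=\{(0,1),(1,1)\}$ actually used in Section~\ref{rank 1}, and with that change your argument closes up: $gt^{\pm1}$ is computed exactly as you computed $g(at)^{\pm1}$, except that the coefficient adjoined to the relevant $p_i$ is $f_k$ (resp.\ $f_{k-1}$) rather than $f_k+1$ (resp.\ $f_{k-1}+1$), so the four elements $gt$, $g(at)$, $gt^{-1}$, $g(at)^{-1}$ map bijectively onto the four $\mathcal{H}_1$-neighbours of $\Phi(g)$. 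That completed argument coincides with the paper's proof.
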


This result originates with P.~Neumann and R.~M\"oller in 2000.   They noticed that, with respect to a suitable generating set, the Cayley graph of $\Gamma_1(2)=(\Z / 2 \Z) \wr \Z$ is a \emph{highly-arc-transitive digraph} constructed by M\"oller  in \cite{Moller}, which is the horocyclic product $\mathcal{H}_1(\z/2\z)$ of two infinite binary trees \cite{MN}.  See also \cite{BaW, BW, Woess}.

Proposition~\ref{starting prop} is a special case (with $n=1$ and $R=\z/2\z$) of Theorem~\ref{main}   which will identify Cayley graphs of generalized lamplighter groups with the $1$-skeleta of horocyclic products of trees.  A mild generalization (allowing other rings in place of $\Z/2\Z$)   is proved in Section~\ref{rank 1}.

\subsection{Generalized lamplighter groups.}\label{general lamp groups}

Another group we can consider is $\Z \wr \Z$ which we denote by $\Gamma_1$. Again, as an abelian group the ring $\Z[x,x^{-1}]$ is isomorphic to the additive group $\bigoplus_{i \in \Z} \Z$ of $\Z$-indexed finitely supported sequences of integers. So $\Gamma_1$ can also  be expressed as $\Z[x,x^{-1}]\rtimes \Z$ where  a generator of  the   $\Z$-factor  acts on $\Z[x,x^{-1}]$ by multiplication by $x$. The model for $\Gamma_1$ is similar to that of $\Gamma_1(2)$, except each lamp has $\z$-worth of brightness levels. A presentation for $\Gamma_1$ is $\left\langle  \ a, t \ \left| \  \left[a,a^{t^k}\right]=1 \ (k \in \Z)  \  \right. \right\rangle$, which is similar to that of $\Gamma_1(2)$ except that $a$ has infinite order.

Similarly, for any commutative ring with unity $R$, we can construct a group $\Gamma_1(R)=R[x,x^{-1}]\rtimes \Z$ and consider the model where the lamps have $|R|$-worth of brightness levels. In this notation, $\Gamma_1(2)=\Gamma_1(\z/2\z)$ and $\Gamma_1=\Gamma_1(\z)$. The case where $n=1$ of Theorem~\ref{main} states that the horocyclic product of two $R$-branching trees $\mathcal{H}_1(R)$ (defined in Section~\ref{tree defn section}) is the Cayley graph of $\Gamma_1(R)$ with respect to a suitable generating set (proved in Section~\ref{rank 1}).

We can generalize these constructions further. The group $\Gamma_2$ is a celebrated example of Baumslag  \cite{Baumslag} and Remeslennikov \cite{Rem}
$$\Z \left[ x, x^{-1}, (1+x)^{-1}\right] \rtimes \Z^{2}$$ where, if the $\Z^2$-factor is $\langle t,s \rangle$, the  actions of $t$ and $s$ are by  multiplication by $x$ and $(1+x)$, respectively.   It was the first example of a finitely presented group with an abelian normal subgroup of infinite rank --- specifically, the derived subgroup $\left[ \Gamma_2, \Gamma_2\right]$. We will show in Proposition~\ref{the presentations for gamma 2} that one of the presentations for $\Gamma_2$ is $$ \left\langle \ a,s,t \ \left| \   [a,a^t]=1, \ [s,t]=1, \ a^s=aa^t \ \right. \right\rangle.$$
An analogous lamplighter model for general $\Gamma_2(R)=R \left[ x, x^{-1}, (1+x)^{-1}\right] \rtimes \Z^{2}$ will be discussed in Section~\ref{Gamma_2 lamplighter}. Restricting to the case where $n=2$, Theorem~\ref{main} states that the $1$-skeleton of the horocyclic product of three $R$-branching trees $\mathcal{H}_2(R)$ is the Cayley graph of $\Gamma_2(R)$ with respect to a suitable generating set (proved in Section~\ref{rank 2}).

We can generalize these constructions even further to obtain the family of groups $\Gamma_n(R)$ that figure in Theorem~\ref{main} defined as follows.


Suppose $R$ is any  commutative ring with unity.   
 
For   $n=1,2, \ldots$,  let 
$A_n(R)$ be the polynomial ring 
 $$R \left[ x, x^{-1}, (1+x)^{-1}, \ldots, (n -1+x)^{-1} \right].$$
 For  $\mathbf{h} = (h_0, \ldots, h_{n-1}) \in \Z^n$ and  $f \in A_n(R)$, define   
$$f \cdot \mathbf{h} \ := \  f x^{h_0} (1+x)^{h_1} \cdots (n-1+x)^{h_{n-1}}.$$ 
 Then
 $\Gamma_n(R)   := \ A_n(R) \rtimes \Z^n$ where   
the group operation is   $(f,\mathbf{h})(\hat{f},\hat{\mathbf{h}})   =   (f + \hat{f} \cdot {\mathbf{h}},\mathbf{h} + \hat{\mathbf{h}})$.   

 This definition can be conveniently repackaged as:     $$\Gamma_n(R) \ \cong \ \set{  \ \left. \left(\begin{array}{cc}x^{k_0}(1+x)^{k_1}\cdots (n-1+x)^{k_{n-1}}    & f \\0 & 1\end{array}\right) \  \right| \  k_0,  \ldots,  k_{n-1} \in \Z, \ f \in A_n(R) \  },$$
where the matrix multiplication naturally realizes the semi-direct product structure of the group.

 For brevity, define $\Gamma_n := \Gamma_n(\Z)$ and $\Gamma_n(m) := \Gamma_n(\Z/m\Z)$. 
 
It will prove natural for us to index the coordinates of $\Z^n$   by $0, \ldots, n-1$.  Accordingly, we use   $\e_0, \ldots, \e_{n-1}$ to denote the standard basis for $\Z^n$.


In higher rank, the examples originate with Baumslag, Dyer, and Stammbach in \cite{BD, BaS}.   Bartholdi, Neuhauser and Woess \cite{BNW} studied the family including  $\Gamma_{n}(m)$ for $n=1, 2, \ldots$ and $m \in \N$ such that $2, 3, \ldots, n-1$ are invertible in $\Z/m\Z$. And recently, Kropholler and Mullaney \cite{KM}, building on Groves and Kochloukova~\cite{GK}, studied $\Gamma_n(\Z[1/(n-1)!]) \rtimes \Z$ where a generator of the   $\Z$-factor  acts as multiplication by $(n-1)!$ on the $A_n(\Z[1/(n-1)!])$-factor in $\Gamma_n(\Z[1/(n-1)!])$ and  trivially on the $\Z^n$-factor.  To put it another way,  these groups are $A_n(\Z[1/(n-1)!]) \rtimes \Z^{n+1}$, defined like $\Gamma_n(\Z[1/(n-1)!])$, but with a generator of the additional $\Z$-factor acting on  $A_n(\Z[1/(n-1)!])$ by   multiplication by $(n-1)!$.  

 \subsection{Cayley graphs of generalized lamplighter groups.}

The main theorem we address in this article is:
  
\begin{thm} \label{main}
For $n =1, 2, \ldots$, if $2, \ldots, n-1$ are invertible in $R$, then  the 1-skeleton of $\mathcal{H}_n(R)$ is the Cayley graph  of $\Gamma_n(R)$ with respect to the generating set $$\set{ \left. \, (r, \e_j), \ (r, \e_j)(r, \e_k)^{-1} \  \right| \  r \in R, \  0 \leq  j, k  \leq  n-1  \textup{ and }   j < k \, }.$$   In particular, if $\abs{R} < \infty$, then  $\Gamma_n(R)$ acts geometrically on $\mathcal{H}_n(R)$.  
\end{thm}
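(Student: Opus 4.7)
My plan is to construct an explicit bijection $\Phi : \Gamma_n(R) \to V(\mathcal H_n(R))$ between vertex sets and then verify that each generator-labelled edge in the Cayley graph of $\Gamma_n(R)$ corresponds to exactly one edge of the horocyclic product. The conceptual idea is that $A_n(R)$ consists of rational functions with poles only at the $n$ affine points $x = -j$ for $0 \le j \le n-1$; together with the point at infinity this yields exactly $n+1$ places, one for each factor tree of $\mathcal H_n(R)$. This generalises the lamplighter picture: the address in the $j$-th tree will record the coefficients of the local Laurent expansion of the group element around the $j$-th place, and the height in that tree records the $(j+x)$-adic valuation.

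\textbf{Defining $\Phi$ and proving it is a bijection.} Given $(f,\mathbf h) \in \Gamma_n(R)$, the vector $\mathbf h \in \Z^n$ supplies initial heights at the $n$ finite places, and $f \in A_n(R)$ may be expanded at each of the $n+1$ places. For each $j \in \{0,\dots,n-1\}$ I would expand the relevant quantity as a Laurent series in the uniformizer $u_j := j+x$; the hypothesis that $2,\dots,n-1$ are invertible in $R$ guarantees that every other factor $(k+x)^{\pm 1}$ with $k \neq j$ reduces to a unit modulo $u_j$, so the coefficients of the expansion lie in $R$. The tail of this expansion determines a ray in the $R$-branching tree $\mathcal T_R^{(j)}$, and $\Phi(f,\mathbf h)_j$ is the vertex on this ray at the height given by the $u_j$-adic valuation. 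A parallel expansion at infinity supplies the remaining $(n+1)$-st coordinate. The heights sum to zero by the product formula for valuations of rational functions on $\mathbb P^1$. Injectivity of $\Phi$ is clear because the local expansions jointly determine $f$ and $\mathbf h$; for surjectivity I would reconstruct a group element from a compatible tuple of tree-addresses via partial-fraction decomposition in $A_n(R)$, again using the invertibility hypothesis to separate the $n$ finite places.

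\textbf{Generators to edges, and the main obstacle.} Right-multiplying by $(r,\e_j)$ sends $(f, \mathbf h)$ to $(f + r \cdot \mathbf h, \mathbf h + \e_j)$; tracking its effect on all $n+1$ local expansions simultaneously should shift the $j$-th tree coordinate down one rung (appending a single tree-letter determined by $r$) while pushing exactly one other coordinate---the infinity coordinate, in this case---up by one. The mixed generators $(r,\e_j)(r,\e_k)^{-1}$ with $j<k$ are designed to realise the remaining edges, those which swap rungs between two finite trees and leave the infinity coordinate fixed. The main obstacle I anticipate is precisely this simultaneous bookkeeping: verifying that the address change really is a single-letter append rather than a cascade of carries, and that the coordinate change really is a single edge rather than a path. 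This is where invertibility of $2,\dots,n-1$ in $R$ plays the decisive role, since the partial-fraction identities controlling the carrying involve differences $(k-j)^{-1}$ with $1 \le |k-j| \le n-1$. Once the edge correspondence is proved, the Cayley graph of $\Gamma_n(R)$ is identified with the 1-skeleton of $\mathcal H_n(R)$, and the geometric action in the case $|R| < \infty$ is automatic from left multiplication of $\Gamma_n(R)$ on its Cayley graph.
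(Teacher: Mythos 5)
Your proposal follows essentially the same route as the paper: the map $\Phi$ you describe---recording, for each of the $n+1$ places of the projective line, the principal part of a suitably twisted copy of $f$ together with the height supplied by the $\Z^n$-coordinate---is exactly the paper's bijection, and the paper likewise establishes bijectivity by showing that multiplication by the monomials $x^{q_0}(1+x)^{q_1}\cdots(n-1+x)^{q_{n-1}}$ acts bijectively on these coefficient sequences and then verifies generator-by-generator that each edge changes exactly one address by a single letter, with the invertibility of $2,\ldots,n-1$ entering precisely where you predict (in the partial-fraction identities and in the invertibility of the factors $\beta_j=\prod_{i\neq j}(i-j)^{h_i}$). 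The only cosmetic difference is that the paper replaces your valuation-theoretic framing (which is the Bartholdi--Neuhauser--Woess formalism it cites) by the explicit $R$-basis $\set{1,\,x^{j},\,x^{-j},\,(\ell+x)^{-j}}$ of $A_n(R)$ and elementary manipulation of polynomials.
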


For $R$ finite,  this theorem is due to Bartholdi, Neuhauser \& Woess \cite{BNW}.  (Instead of   working with $A_n(R)$ and insisting that  $2, \ldots, n-1$ are invertible in $R$, they work more generally with polynomials $R[x, ( \ell_0 +x)^{-1}, \ldots, (\ell_{n-1} +x)^{-1}]$ such that the pairwise differences $\ell_i - \ell_j$ are all invertible.  Our treatment could be extended to this generality if desired.)  We aim here to give as elementary, explicit and transparent a proof as possible for general  $\Gamma_{n}(R)$.  The proof in \cite{BNW}  proceeds via manipulations of formal Laurent series.  We will work with `lamplighter models' as far as possible---the cases $n=1$ and $n=2$---and use these models to illuminate a proof in the general case which involves suitably manipulating  polynomials.

Theorem~\ref{main} fits into a broader context which can be found in the introduction to \cite{Margulis} (as we thank C.~Pittet for pointing out).  In the case where $R$ is the field $\mathbb{F}_p$, the trees arise  from valuations on $\mathbb{F}_p(\!(x)\!)$ (cf.\ Section~4.2 of \cite{GK}), and this leads to 
$\Gamma_2(\mathbb{F}_p)$ being a cocompact lattice in $\Sol_5(\mathbb{F}_p(\!(x)\!))$ (Proposition~3.4 of \cite{CT}), and we  presume generalizes to  $\Gamma_n(\mathbb{F}_p)$ in $\Sol_{2n+1}(\mathbb{F}_p(\!(x)\!))$.   This provides the formalism adopted by  Bartholdi, Neuhauser \& Woess in \cite{BNW} in their proofs. However our perspective is that the theorem relates two elementary (and starkly different) objects: horocyclic products of trees and  a family of metabelian groups defined using polynomial rings, and there should be a  proof  which is intrinsic to those concepts and is correspondingly elementary.  We aim here to provide such a proof to clarify the relationship and explore how far the ideas can be pushed. 

The $n=1$ and  $n=2$ cases of the theorem motivate us to give (in Section~\ref{presentations section})  some group presentations which reflect the horocyclic product structure.  One such presentation  then features in this embellishment of an $n=2$ case of Theorem~\ref{main}:

\begin{thm} \label{Cayley complex} 
 $\mathcal{H}_2(\Z)$  is the Cayley 2-complex with respect to this presentation  of $\Gamma_2$: 
$$\left\langle \ \lambda_i, \mu_i, \nu_i \  (i \in \Z)   \ \left| \   \lambda_i = \nu_i \mu_i, \  \lambda_{i+j} = \mu_i\nu_j  \  (i,j \in \Z)  \ \right. \right\rangle.$$ 
\end{thm}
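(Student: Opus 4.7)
The strategy is threefold. First, I would realize the generators as concrete elements of $\Gamma_2 = A_2(\Z) \rtimes \Z^2$: set $\lambda_i := (i, \e_0)$, $\mu_i := (i, \e_1)$, and $\nu_i := (i, \e_0)(i, \e_1)^{-1}$. Using the semidirect product law $(f, \mathbf{h})(\hat{f}, \hat{\mathbf{h}}) = (f + \hat{f} \cdot \mathbf{h}, \mathbf{h} + \hat{\mathbf{h}})$ together with $f \cdot \e_1 = f(1+x)$, a direct computation gives $\nu_i = (i/(1+x), \e_0 - \e_1)$, and both relations $\lambda_i = \nu_i\mu_i$ and $\lambda_{i+j} = \mu_i\nu_j$ then reduce to elementary polynomial manipulations. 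To confirm these relations \emph{suffice} to present $\Gamma_2$, I would apply Tietze transformations: specializing $i=0$ and then $j=0$ in the second relation yields $\nu_j = \mu_0^{-1}\lambda_j$ and $\mu_i = \lambda_i\nu_0^{-1}$, so all $\mu_i, \nu_i$ with $i \neq 0$ may be eliminated. The residual relations should then collapse to the cocycle identity $\lambda_{i+j} = \lambda_i\lambda_0^{-1}\lambda_j$ together with $\mu_0\lambda_i\mu_0^{-1} = \lambda_i^2\lambda_0^{-1}$, which under the correspondence $t = \lambda_0$, $s = \mu_0$, $a = \lambda_1\lambda_0^{-1}$ should match the finite presentation of Proposition~\ref{the presentations for gamma 2}.

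Second, with the presentation justified, the $n=2$, $R=\Z$ case of Theorem~\ref{main} identifies the 1-skeleton of the Cayley 2-complex with the 1-skeleton of $\mathcal{H}_2(\Z)$, since $\set{\lambda_i, \mu_i, \nu_i \,|\, i \in \Z}$ is precisely the generating set $\set{(r, \e_j),\, (r, \e_j)(r, \e_k)^{-1}}$ of Theorem~\ref{main}.

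Third, I would match the 2-cells. The Cayley 2-complex attaches, at each group element, one triangular 2-cell with boundary $\nu_i\mu_i\lambda_i^{-1}$ and another with boundary $\mu_i\nu_j\lambda_{i+j}^{-1}$ for each $i, j \in \Z$. On the geometric side, $\mathcal{H}_2(\Z)$ inherits 2-cells from the product structure of $\mathcal{T}_\Z \times \mathcal{T}_\Z \times \mathcal{T}_\Z$ intersected with the horocyclic constraint $h_0 + h_1 + h_2 = 0$: each edge traverses two of the three tree factors (one up, one down), and the natural 2-cells are the triangles whose three edges each act on a distinct pair of trees. A case analysis, classifying triangles by which pair of trees each edge acts on and tracking the up/down height moves so the loop closes, should yield exactly the two families of triangles matching the two relation types. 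The main obstacle is this final step: pinning down the correspondence between each generator type $\lambda_i, \mu_i, \nu_i$ and a specific pair of trees, and verifying that the height-change bookkeeping around each triangle closes up precisely when a relation holds. Once this combinatorial matching is nailed down, the Cayley 2-complex agrees with $\mathcal{H}_2(\Z)$ on both 1- and 2-cells, completing the proof.
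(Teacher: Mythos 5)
Your first two steps track the paper's route: the presentation is item (iii) of Proposition~\ref{the presentations for gamma 2} (your Tietze reduction to the finite presentation is viable, and the computation $\nu_i = (i/(1+x), \e_0-\e_1)$ is correct), and the identification of $1$-skeleta is exactly the $n=2$ case of Theorem~\ref{main}. The gap is in the third step, which you correctly flag as the main obstacle but then leave unresolved---and the plan you sketch for it would not quite close it. First, the up/down height bookkeeping cannot by itself ``yield exactly the two families'': every closed triangle has one edge of each type $\lambda,\mu,\nu$, and closure of the three heights forces the \emph{same} sign pattern (one $\lambda^{+1}$, one $\mu^{-1}$, one $\nu^{-1}$, up to global inversion) in every case. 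What separates the two families is the cyclic order of the three edge-types around the triangle, and to see that exactly two orders occur---and that both do occur---you must analyse where a $2$-cell of $\mathcal{H}_2(\Z)$ sits inside a unit cube of $\mathcal{T}_{\Z}^3$. The paper writes an interior point in coordinates $(x_0,x_1,x_2)$ with $0<x_j<1$ and observes $x_0+x_1+x_2\in\{1,2\}$, giving two types of $2$-cell (the two hypersimplices of Section~\ref{cell-structure section}) whose boundaries read $\lambda_k\nu_j^{-1}\mu_i^{-1}$ and $\lambda_i^{-1}\nu_j\mu_k$ respectively. Second, and more seriously, heights are blind to subscripts: all $\lambda_i$ move the same pair of trees in the same directions regardless of $i$, so your bookkeeping only determines the boundary word up to the choice of $i,j,k$. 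Since the boundary of a $2$-cell is a closed loop in the Cayley graph, that word is trivial in $\Gamma_2$, and one must separately prove that $\lambda_k\nu_j^{-1}\mu_i^{-1}=1$ iff $k=i+j$ and that $\lambda_i^{-1}\nu_j\mu_k=1$ iff $i=j=k$ before concluding the boundary word is one of the \emph{defining} relators. The paper does this with the lamplighter model of Section~\ref{rank 2}; your proposal contains no substitute for this algebraic step.

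There is also a bookkeeping issue at the end: to conclude that the two $2$-complexes agree you need both directions and the right multiplicities---every loop reading a defining relator must bound a $2$-cell of $\mathcal{H}_2(\Z)$, and no edge-loop may bound two $2$-cells in either complex (else the complexes could disagree on how many $2$-cells are attached along a given relator loop). Both checks are short, but neither appears in your outline.
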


\subsection{The organization of this article}

In Section~\ref{significance} we explain the significance of the family $\Gamma_n(R)$.  They have   compelling  applications and properties   and  other manifestations and they bear comparison with other important families such as Bieri--Stallings groups, the Lie groups $\textup{Sol}_{2n+1}$, and Baumslag--Solitar groups.   
In Section~\ref{Horocyclic product defn section} we define the trees $\mathcal{T}_R$ and their horocyclic products $\mathcal{H}_n(R)$, and explain some of their features.      
We prove Theorem~\ref{main} in the case $n=1$ in Section~\ref{rank 1}.  This introduces some of the key ideas in a straight-forward setting.  In Section~\ref{rank 2}, we give a proof for the $n=2$ case which contains most of the ideas of the general proof, but  we are able to present them in purely combinatorial terms using a \emph{lamplighter  description} of  $\Gamma_{2}(R)$.  We  explain our proof for general $n$ in Section~\ref{rank n}.    In Section~\ref{presentations section} we   discuss presentations for   $\Gamma_{n}(R)$ and then we  prove   Theorem~\ref{Cayley complex} in Section~\ref{Cayley complex section}.

 \subsection{Acknowledgements}  We have  enjoyed discussions about these groups with a number of people, especially, Sean~Cleary, David~Fisher, Martin~Kassabov, Peter~Kropholler, Yash~Lodha, Jon~McCammond,  Christophe~Pittet, Melanie~Stein, Jennifer~Taback, and Peter~Wong.  We also thank  John~Baez and Allen~Knutson for recognizing that the cell structures considered in Section~\ref{cell-structure section} are assembled from hypersimplices.

\section{The significance of the family $\Gamma_n(R)$} \label{significance}

Here are some of the applications, properties, and cousins of the groups $\Gamma_n(R)$. 

Instances of the family $\Gamma_n(R)$ and the related horocyclic products have featured in some major breakthroughs.  Baumslag and Remeslennikov's construction of $\Gamma_2$ precipitated their theorem that every finitely generated metabelian group embeds  in a finitely presented metabelian group \cite{Baumslag5, Rem}.  

Grigorchuk, Linnell, Schick, and {\.Z}uk    showed that the $L^2$-Betti numbers of  Riemannian manifold with torsion-free fundamental group need not be integers (answering a strong version of a question of Atiyah  \cite{Atiyah}) by  constructing a $7$-dimensional such manifold with fundamental group $\Gamma_{2}(2)$ and third $L^2$-Betti number $1/3$ in \cite{GLSZ}. 
  
Diestel and Leader  in \cite{DL} put forward the  horocyclic product of an infinite 2-branching and an infinite 3-branching tree  as a candidate to answer a question of Woess as to whether there is a vertex-transitive graph  not quasi-isometric to a Cayley graph.   Eskin, Fisher and Whyte~\cite{EFW} verified this.  (Accordingly, the 1-skeleta of $\mathcal{H}_{n}(\Z/m\Z)$ of Section~\ref{horo section} are termed \emph{Diestel--Leader graphs} in \cite{BNW}.)  Woess recently wrote an account of this breakthrough and its history  \cite{Woess2}.  
  
Eskin, Fisher and Whyte  \cite{EFW} also classified lamplighter groups up to quasi-isometry. Dymarz~\cite{Dymarz} used lamplighter examples to show that quasi-isometric finitely generated groups need not be bilipshitz equivalent.  In both cases, the horocyclic product view-point  was essential to their analyses. 

A number of properties of these groups have been identified.   

Bartholdi \& Woess \cite{BaW} studied the asymptotic behaviour of the $N$-step return probabilities of a simple random walk on a horocyclic product of two regular (finitely) branching trees.
  Woess~\cite{Woess}  described  positive harmonic functions in terms of the boundaries of the two trees. 
 Bartholdi, Neuhauser \& Woess~\cite{BNW} identified the $\ell^2$-spectrum of the simple random walk operator and studied the Poisson boundary    for  a large class of group-invariant random walks on   horocyclic products of trees.

A group $G$ is  \emph{of type $\mathcal{F}_n$} if there exists a $K(G,1)$  (an Eilenberg--Maclane space---a CW-complex whose fundamental group is $G$ and which has contractible universal cover) with finite $n$-skeleton. All groups are $\mathcal{F}_0$, being finitely generated is equivalent to  $\mathcal{F}_1$, and being finitely presentable is equivalent to $\mathcal{F}_2$.
Bartholdi, Neuhauser \& Woess \cite{BNW} show that $\H_{n}(\Z/m\Z)$ is $(n-1)$-connected but not $n$-connected and deduce that $\Gamma_{n}(m)$  is  of type $\mathcal{F}_n$ but not of type $\mathcal{F}_{n+1}$ when $1, \ldots, n-1$  are invertible in $\Z/m\Z$.  Kropholler \& Mullaney \cite{KM} use Bieri--Neumann--Strebel invariants to prove that $\Gamma_n  (\Z[1/(n-1)!])   \rtimes \Z$ (as defined in Section~\ref{general lamp groups}) is  of type $\mathcal{F}_n$ but not of type $\mathcal{F}_{n+1}$. The Bieri--Stallings groups \cite{Bieri, Stallings} exhibit the same finiteness properties, and bear close comparison with the family  $\Gamma_{n}(2)$  in that both are  level sets in products of trees (just the height functions concerned differ). 

Cleary \& Taback   \cite{CT1} showed that, with respect to a standard generating set, $\Gamma_{1}(2)$ has \emph{unbounded dead-end depth}: there is no $L >0$ such that for every group element $g$, there is a  group element further from the identity than $g$ that is within a distance less than $L$ from $g$.   (Cf.\  Question 8.4 in   \cite{Bestvina},   which  Erschler observed can be resolved using $\Gamma_1(2)$.)    Cleary \& Riley \cite{CRwithcorrection} exhibited $\Gamma_{2}(2)$ as the first finitely presentable group known to have the same property.   By finding  a combinatorial formula for the word metric, Stein \& Taback \cite{ST}  showed that, with respect to generating sets for which the Cayley graphs are horocylcic products,  $\Gamma_{n}(m)$ have no regular language of geodesics and have unbounded dead-end depth.    We understand that Cleary has unpublished work and Davids \& Taback have work in progress on whether or not almost convexity holds for $\Gamma_2(2)$ with respect to certain generating sets.

De Cornulier \& Tessera showed that the  Dehn function of $\Gamma_{2}(2)$ grows quadratically \cite{CT}, and  Kassabov \& Riley \cite{KR2} that that of    $\Gamma_2$ grows exponentially.   
 
The horocyclic product construction can be applied to any family of spaces  with height functions.    A fruitful alternative to  $\mathcal{T}_{\Z/m\Z}$ is  the hyperbolic plane $\mathbb{H}^2$, viewed as the upper half of the complex plane, with height function given by $\log_q ( \textup{Im} \, z)$   for some fixed $q >1$.  The horocyclic product of $n$ copies of  $\mathbb{H}^2$ (each with the same $q>1$) is a manifold $\textup{Sol}_{2n-1}$.  (Varying $q$ is a dilation.) The horocyclic product of   $\mathcal{T}_{\Z/p\Z}$  and  $\mathbb{H}^2$ with parameter $q$  is termed \emph{treebolic space} in \cite{BSSW}.  When $p=q$ it is shown to be a model space for the Baumslag--Solitar group $\langle a,b \mid b^{-1}ab=a^p \rangle$---that is, the group acts on the space cocompactly  by  isometries.

These constructions and their parallels have been pursued particularly by Woess and his coauthors \cite{BNW, BaW,  BSSW,   BSW, BrW, Woess},  focusing   on stochastic processes, harmonic maps,  and  boundaries.  He gives an introduction in \cite{Woess2}.   Additionally, the boundaries of these various horocyclic products admit similar analyses, which is  why the work of Eskin, Fisher \& Whyte \cite{EF, EFW, EFW0, EFW2} encompasses both $\Sol_3$ and lamplighter groups. 
Dymarz~\cite{Dymarz2013} also exploits the parallels. 

The parallel is promoted to absolute agreement when one passes to asymptotic cones.  After all, the asymptotic cones of $\mathcal{T}_{\Z/m\Z}$  for $m \geq 2$ and of $\mathbb{H}^2$ are both the everywhere $2^{\aleph_0}$-branching $\R$-tree.  The height functions on $\mathcal{T}_{\Z/m\Z}$ and $\mathbb{H}^2$  induce a height function on this $\R$-tree in such a way that the asymptotic cones of a horocyclic product of $k$ spaces, each of which is either $\mathcal{T}_{\Z/m\Z}$ or $\mathbb{H}^2$, is the horocyclic product of $k$ $\R$-trees.  So, for instance, for $m \geq 2$,   the Baumslag--Solitar groups $\textup{BS}(1,m)$,   $\textup{Sol}_3$, and   $\Gamma_{2}(m)$  all have the same asymptotic cones.   (This observation is essentially in Bestvina~\cite{BestvinaPL}.)
  
Another striking manifestation, set out  in \cite[Remark 4.9]{BNW} (building on the $n=1$ case in  \cite{Nekrashevych}), of most $\Gamma_{n}(m)$ is as automata groups.

 \section{Horocyclic products of trees} \label{Horocyclic product defn section}
 
 \subsection{$R$-branching trees}\label{tree defn section}
 
We let $\mathcal{T}_R$ denote the  \emph{$R$-branching tree}. This is the simplicial tree in which every vertex has   $1+ \abs{R}$ neighbours.    Equip $\mathcal{T}_R$  with the  natural path metric  in which every edge has length one.    Any choice of infinite directed geodesic ray $\rho: \R \to \mathcal{T}_R$ with $\Z \subseteq \R$ mapping to the vertices along the ray  determines a height  (or \emph{Busemann}) function $h : \mathcal{T}_R \to  \R$ by  $$h(p) \ = \  \rho^{-1}(q) + d(p,q)$$ where $q$ is the point on the ray closest to $p$.       Figure~\ref{tree heights} gives some examples of calculations of heights.

 \begin{figure}[ht]
   \psfrag{m}{$-1$}  \psfrag{0}{$0$}   \psfrag{1}{$1$}   \psfrag{2}{$2$}  \psfrag{r}{$\rho$}    \psfrag{h}{$h$}     \psfrag{p}{$p$}   \psfrag{q}{$q$}   \psfrag{P}{$p'$}   \psfrag{Q}{$q'$} 
 \centerline{\epsfig{file=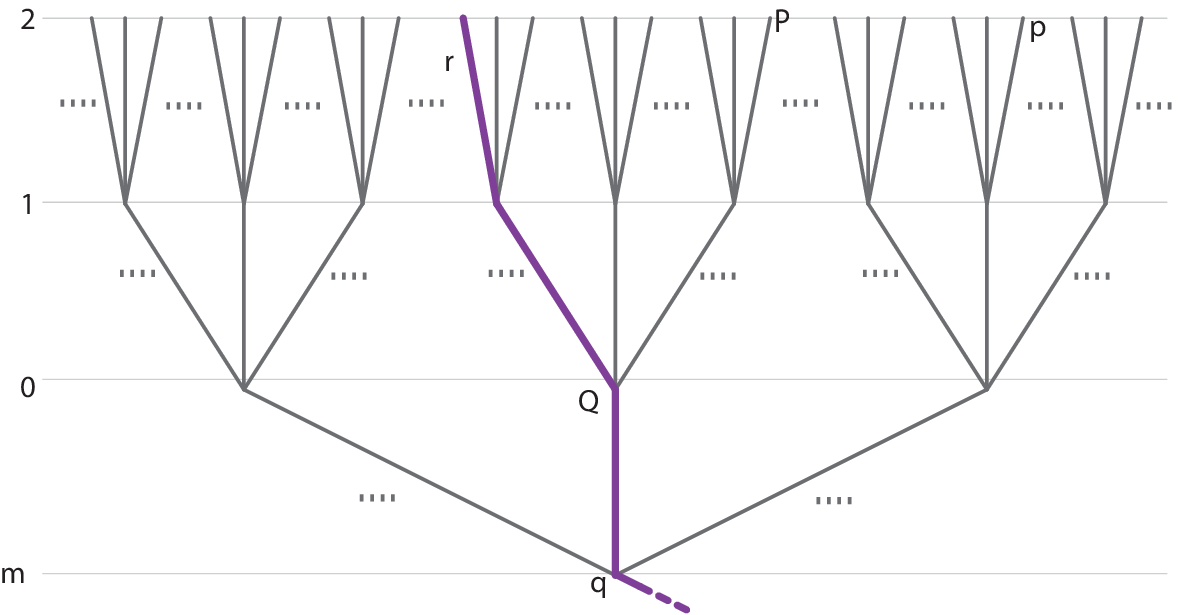}} \caption{The tree $\mathcal{T}_R$ with an infinite geodesic ray  $\rho$ determining a height function $h$.  For example,
$h(p)=\rho^{-1}(q) + d(p,q)=-1+3=2$ and $h(p')=\rho^{-1}(q') + d(p',q')=0+2=2$.} \label{tree heights}
\end{figure}

Label the edges emanating upwards from any given vertex in $\mathcal{T}_R$ by the elements of $R$ in such a way that the edges traversed by $\rho$ are all labeled $0$.   Then we can specify a unique \emph{address} for each vertex in $\mathcal{T}_R$ as follows.  

\begin{lemma}[Addresses of vertices in $\mathcal{T}_R$] \label{addresses}
Vertices $v$ in $\mathcal{T}_R$ are in bijective correspondence with pairs consisting of an integer (the height of $v$) and a finitely supported  sequence of elements of $R$ (the labels on the edges that a  downwards path starting at $v$ follows).  
\end{lemma}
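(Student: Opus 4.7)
The plan is to realize the claimed bijection via the downward-descent structure of $\mathcal{T}_R$. The central observation is that every vertex $v$ has exactly one neighbor $v^\downarrow$ of height $h(v)-1$ and $|R|$ neighbors of height $h(v)+1$. Writing $q$ for the closest point of $\rho$ to $v$, the neighbor of $v$ on the geodesic $[v,q]$ (or, when $v$ lies on $\rho$, the neighbor immediately below $v$ along $\rho$) plays the role of $v^\downarrow$: it lies closer to $\rho$ by one, so $h$ drops by $1$. Every other neighbor $v'$ keeps $q$ as its closest point on $\rho$ but satisfies $d(v',q)=d(v,q)+1$, whence $h(v')=h(v)+1$. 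This justifies defining the downward path $v=v_0,\ v_1=v_0^\downarrow,\ v_2=v_1^\downarrow,\ldots$ and reading off labels $r_i\in R$ on the edges $\{v_i,v_{i+1}\}$ using the given labeling of upward edges.

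Next I would verify finite support: the downward path reaches $\rho$ in exactly $d(v,q)$ steps, since each step either decreases the distance to $\rho$ (while the projection remains at $q$) or, once on $\rho$, descends along it. From that step onward every edge lies on $\rho$ and thus carries label $0$, so $(r_0,r_1,\ldots)$ has finite support, and the map $v\mapsto(h(v),(r_i))$ lands in the claimed target set.

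For injectivity I would assume $v\neq v'$ have equal height (else the first coordinates already differ); their downward paths both merge onto $\rho$ and hence eventually coincide, so let $k$ be the first index with $v_k=v'_k$. Then $v_{k-1}$ and $v'_{k-1}$ are distinct upward neighbors of this common vertex, so they sit on edges with distinct labels and produce $r_{k-1}\neq r'_{k-1}$. For surjectivity I would build a preimage of $(n,(r_i))$ with $r_i=0$ for $i\ge N$ by starting at $\rho(n-N)$ and climbing along the edges labeled $r_{N-1},r_{N-2},\ldots,r_0$ in turn; the resulting vertex has height $n$, and its downward path reproduces $(r_i)$. Independence from the choice of such $N$ follows because enlarging $N$ merely prepends steps along $\rho$, all labeled $0$. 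The only real content lies in the local up/down dichotomy established in the first paragraph; the rest is bookkeeping, with the mild pitfall being to fix a consistent convention for which edge of the downward path corresponds to which index of the sequence.
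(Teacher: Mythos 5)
Your argument is correct and is exactly the argument the paper has in mind: the authors state only that the lemma ``is easily proved,'' noting the finite-support point (the last non-zero label marks where the downward path becomes confluent with $\rho$), which is the same observation you make. Your write-up simply supplies the routine details (the one-down/$|R|$-up neighbour dichotomy, injectivity via the first point of coincidence of the two downward paths, and surjectivity by climbing from $\rho(n-N)$), so there is nothing to add.
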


This lemma is easily proved.  The sequences are finitely supported because   the last non-zero entry in the sequence indicates where the downwards path becomes confluent with $\rho$.     
 
\subsection{The horocyclic product of $R$-branching trees}  \label{horo section}
 
 The \emph{horocyclic product} of $n+1$ copies of $\mathcal{T}_R$ is $$\mathcal{H}_n(R) \  := \ \set{ \, (p_0, \ldots, p_n) \in \mathcal{T}_R^{n+1} \, \left| \,  \sum_{i=0}^n h(p_i) =0 \, \right.}.$$    
It is naturally an  $n$-complex:   $(p_0, \ldots, p_n)$ 
is in the $k$-skeleton if and only if  $$\abs{ \rule{0mm}{8pt} \set{ i \mid h(p_i) \in \Z}} \ \geq \  n- k.$$  Equivalently, if we view $\mathcal{T}^{n+1}_R$ as a cubical complex in the natural way, then the $k$-cells of $\mathcal{H}_n(R)$ are the intersections of the $(k+1)$-cells of $\mathcal{T}^{n+1}_R$  with $\mathcal{H}_n(R)$.

Figure~\ref{Tullia's picture} shows a horocyclic product of two 3-branching rooted trees of depth 2, and so a portion of $\mathcal{H}_{1}(\Z/3\Z)$.   Nine upwards- and nine downwards-3-branching trees are apparent in this graph.

 \begin{figure}[ht]
\centerline{\epsfig{file=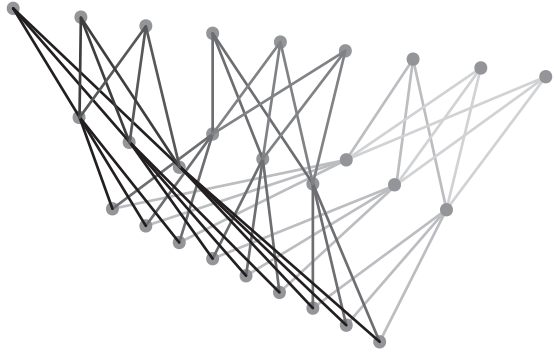}} \caption{A portion of $\mathcal{H}_{1}(\Z/3\Z)$, after a figure by   Dymarz in \cite{Dymarz}.}\label{Tullia's picture}
\end{figure} 
 
\subsection{Cell-structure}  \label{cell-structure section}
 
It will not be required in our proofs of theorems that follow, but we include a description here of the cell-structure of  $\mathcal{H}_n(R)$, which turns out to be attractively exotic and so adds to the lure of family groups $\Gamma_n(R)$.   Some of the details given here  were also identified in Section~4.1 of \cite{BNW}.  
 
To understand the cell-structure of  $\mathcal{H}_n(R)$ it helps to consider   the case $\mathcal{H}_{n}(1)$ where $R$ is the zero ring (with only one element---we do not insist $0 \neq 1$ in a ring), or equivalently  $\Z/1\Z$.   Recall that  $\mathcal{T}_1$   is simply the real line subdivided into unit intervals (known as the \emph{apeirogon}) and  $\mathcal{H}_{n}(1)$ is  the horocyclic product of $n+1$ copies of $\mathcal{T}_1$.    In other words   $\mathcal{H}_{n}(1)$ is the slice through  the standard tessellation of $\R^{n+1}$ by unit $(n+1)$-cubes by the hyperplane  $$H \ := \ \set{ \left.  (x_0, \ldots, x_n) \in \R^{n+1}  \, \right| \,  x_0 + \cdots + x_n =0}.$$

Given that the height-preserving  map  $\mathcal{T}_R \onto \mathcal{T}_1$ that  collapses the branching  induces  a map   $\mathcal{H}_{n}(R)  \onto \mathcal{H}_{n}(1)$, we can view  $\mathcal{H}_{n}(R)$ as many copies  of $\mathcal{H}_{n}(1)$  branching along  $\mathcal{H}_{n-1}(R)$ subcomplexes. 

But what is the cell-structure of $\mathcal{H}_{n}(1)$?  What tessellation  of $\R^n$ (alternatively called a \emph{honeycomb})  does it give?   

The first two examples are readily identified:  $\mathcal{H}_{1}(1)$ is  the apeirogon and  $\mathcal{H}_{2}(1)$ is the tessellation of $\R^2$  by equilateral triangles of side-length $\sqrt{2}$.

The vertices of  $\mathcal{H}_{n}(1)$  are the points where $H$ intersects the 1-skeleton of the tessellation of $\R^{n+1}$ by unit-cubes, in other words the points $(x_0, \ldots, x_n)$ such that $x_0 + \cdots + x_n =0$ and at least $n$ (therefore all) of the coordinates $x_i$ are integers.  So the vertex set of $\mathcal{H}_{n}(1)$ is  $\set{ \left. (x_0, \ldots, x_n) \in \Z^{n+1}  \,  \right| \, x_0 + \cdots + x_n =0 }$, which is known as  the \emph{$A_n$ lattice}.

 The  vectors $\set{ \left.  \e_0 - \e_j  \,  \right| \,  1 \leq j   \leq n}$ generate the parallelepiped 
  \begin{align*}
  P \  = \ \ & \set{ \left.  \sum_{j=1}^n r_j(\e_0-\e_j) \  \right|  \ 0\leq r_j\leq 1 } \\
   \ = \  \ &  \set{ \left.   (x_0, \ldots, x_n) \in \R^{n+1}   \, \right| \,     -1\leq x_1, \ldots, x_n \leq 0 \textup{ and } x_0 + \cdots + x_n =0  }       
 \end{align*}
whose translates $\mathbf{x} + P$, as $\mathbf{x}$ ranges over lattice points, tessellate $H$.   The span of any  $k$ vectors   in  $\set{  \e_0 - \e_j  \mid  1 \leq j   \leq n}$   is a subspace of  $\R^{n+1}$ over which all but $k+1$ coordinates are constantly zero, and so is a subset of the $(k+1)$-skeleton of the tessellation by unit cubes.   So, for every $k$, the $k$-cells of $P$ are a subset of the $k$-skeleton of $\mathcal{H}_{n}(1)$, and $\mathcal{H}_{n}(1)$ is the tessellation formed by the translates of some subdivision of $P$.  This subdivision  is   by \emph{hypersimplices} (also known as  \emph{ambo-simplices}).     

The \emph{$(k,n+1)$-hypersimplex}  (where $k=1,\ldots, n$) is the $n$-dimensional polytope defined in the following three linearly equivalent  ways \cite{Doliwa}.  
\begin{enumerate}
\item The convex hull of the midpoints of the $(k-1)$-cells of the regular $n$-simplex  $\set{ \left. (x_0, \ldots, x_n) \in \R^{n+1} \, \right| \, 0 \leq x_0, \ldots, x_n \leq 1 \textup{ and } x_0 + \cdots + x_n =1  }.$  
\item The convex hull of the $\SB{n+1}{k}$ points  in $\R^{n+1}$ that have $k$ coordinates all $1$ and the remaining $n+1-k$ all $0$.  
\item $\set{ \left. (x_0, \ldots, x_n) \in \R^{n+1} \, \right| \, 0 \leq x_0, \ldots, x_n \leq 1 \textup{ and } x_0 + \cdots + x_n =k  }.$ \label{hyper}
\end{enumerate}
Observe that $P$ is the intersection of $H$ with the union of the cubes $[k-1,k] \times [-1,0]^n$ where $k=1, \ldots, n$.  The intersection of $H$ with   $[k-1,k] \times [-1,0]^n$ is  $$\set{ \left. (x_0, \ldots, x_n) \in \R^{n+1} \, \right| \, k-1 \leq x_0 \leq  k, \ -1 \leq x_1, \ldots, x_n \leq 0  \textup{ and } x_0 + \cdots + x_n =0},$$ which is mapped to the
$(k,n+1)$-hypersimplex as given by (iii) by the linear equivalence $x_0 \mapsto k-x_0$ and  $x_i \mapsto -x_i$ for $i=1, \ldots, n$.  So (see \cite{Doliwa})  $P$ is assembled from  $(k,n+1)$-hypersimplices, one for each   $k = 1, \ldots, n$.        
For instance, in the case of $\mathcal{H}_{3}(1)$, the parallelepiped $P$  is assembled by attaching tetrahedra (a $(1,4)$- and a $(3,4)$-hypersimplex) to a pair of opposite faces of an octahedron (a $(2,4)$-hypersimplex).

This is the same cellular structure that is obtained from the $A_n$  lattice in $\R^n$ by taking the \emph{Delaunay polytopes} associated to the \emph{holes}.  See Section~4 of Conway--Sloane~\cite{CS}.    The \emph{holes} of a lattice are those points that are at maximal distance from lattice points.  A Delaunay polytope associated to a hole is the convex hull of the lattice points closest to the hole.

\section{The $n=1$ case of Theorem~\ref{main}} \label{rank 1}

Theorem~\ref{main} in the case $n=1$ states that   $\mathcal{H}_1(R)$ is the Cayley graph $\mathcal{C}$ of $\Gamma_1(R)$ with respect to the generating set $\set{ \lambda_r := (r,1) \mid r \in R}$.  This generating set is, in fact,  profligate---$\set{\lambda_0, \lambda_1}$ suffices to generate $\Gamma_1(R)$.  This case includes $\Gamma_1 = \Z \wr \Z$ and lamplighters $\Gamma_1(m) = (\Z/m\Z) \wr \Z$.  

\begin{proof}[Proof of Theorem~\ref{main} for $n=1$ (cf.\ \cite{BaW, BW, Woess})]
An  element  of $\Gamma_1(R) = R[x,x^{-1}]\rtimes \Z$ is a pair $(f, k)$  where $k \in \Z$ and $f= \sum f_jx^j$  with each $f_j \in R$ and only finitely many  are non-zero.  Recall from Lemma~\ref{addresses} that vertices in $\mathcal{T}_R$ are uniquely specified by their \emph{addresses}---pairs  consisting of  a finitely supported sequence of elements of $R$ (the edge-labels on the path proceeding downwards from the vertex) and an integer (the height).      

Let $\Phi$ be the bijection between $\Gamma_1(R)$ and the vertices of $\mathcal{H}_1(R)$ that sends $(f, k)$ to the pair of vertices $(u,v)$ with addresses   $((f_{k}, f_{k+1}, f_{k+2}, \ldots), -k)$  and $((f_{k-1}, f_{k-2}, f_{k-3}, \ldots),k)$, respectively.    So, in effect, $\Phi$ splits the bi-infinite sequence  of coefficients of $f$ apart at $k$ to give  two infinite sequences as shown in the middle of Figure~\ref{action on Gamma1}.  The sequence at the locations shaded pink give the address of $u$ and that shaded green gives the address of $v$.  

In $\mathcal{C}$, the edge labeled $\lambda_r$ emanating from $(f, k)$ leads to $(f, k)\lambda_r=(f+rx^{k}, k+1)$, which is mapped by $\Phi$ to  $(u',v')$ where $u'$ and $v'$ have addresses  $((  f_{k+1}, f_{k+2}, \ldots), -k-1)$ and $((f_{k} +r, f_{k-1}, f_{k-2},   \ldots), k+1)$, respectively---see the top of Figure~\ref{action on Gamma1}.  So, as $r$ varies over $R$,  $(u',v')$ varies  over all the vertices adjacent to $(u,v)$ that are reached by moving along the (unique) downwards edge in $\mathcal{T}_R$ emanating from $u$ and moving   along one of the $R$-indexed edges that emanate upwards from $v$.  

The inverse of  ${\lambda_r} = (r,1)$ is $(-rx^{-1}, -1)$ since $$(r,1)(-rx^{-1}, -1) \ = \ (r+(-rx^{-1})x^{1}, 1-1) \ = \ (0,0).$$

So, similarly, the family $(f, k){\lambda_r}^{-1} = (f-rx^{k-1}, k-1)$ with $r$ ranging over $R$,  is mapped by $\Phi$ to  $(u'',v'')$  where $u''$ and $v''$ have addresses  $((  f_{k-1}-r, f_{k}, f_{k+1}, \ldots), -k+1)$ and $((f_{k-2}, f_{k-3},   \ldots), k-1)$, respectively---see the bottom of Figure~\ref{action on Gamma1}.  These are the vertices obtained by moving along the one downwards edge in $\mathcal{T}_R$ from $v$ and moving from $u$  upwards  along one of the $R$-indexed family of edges.   

So, vertices  that are joined by an edge in $\mathcal{C}$ are mapped by   $\Phi$ to vertices  that are joined by an edge in $\mathcal{H}_1(R)$.  Moreover, every pair of vertices  that are joined by an edge in $\mathcal{H}_1(R)$ can be reached in this way.     So  $\Phi$ extends to a graph-isomorphism $\mathcal{C} \to \mathcal{H}_1(R)$, completing our proof.  
\end{proof}

 \begin{figure}[ht]
    \psfrag{e}{\tiny{${f_{k-5}}$}} 
   \psfrag{d}{\tiny{${f_{k-4}}$}} 
   \psfrag{c}{\tiny{${f_{k-3}}$}} 
   \psfrag{b}{\tiny{${f_{k-2}}$}} 
  \psfrag{a}{\tiny{${f_{k-1}}$}} 
  \psfrag{0}{\tiny{${f_{k}}$}} 
    \psfrag{+}{\tiny{${f_{k}}+r$}} 
  \psfrag{-}{\tiny{${f_{k-1}}-r$}} 
 \psfrag{1}{\tiny{${f_{k+1}}$}} 
  \psfrag{2}{\tiny{${f_{k+2}}$}} 
  \psfrag{3}{\tiny{${f_{k+3}}$}} 
  \psfrag{4}{\tiny{${f_{k+4}}$}} 
  \psfrag{5}{\tiny{${f_{k+5}}$}} 
   \psfrag{t}{${t}$}
      \psfrag{g}{$g$:}
         \psfrag{l}{$\lambda_r$}
   \psfrag{w}{$\lambda^{-1}_r$}
 \psfrag{p}{$g\lambda_r$:}
   \psfrag{m}{$g\lambda^{-1}_r$:}
 \centerline{\epsfig{file=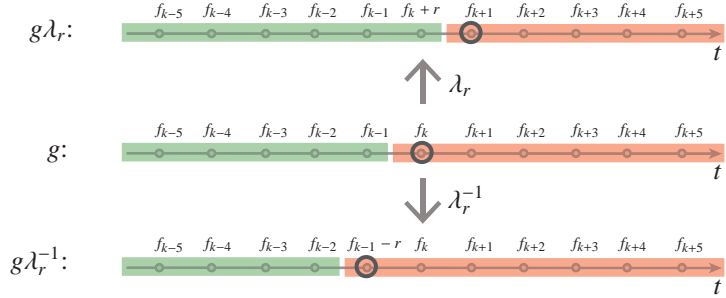}} \caption{Here we use the lamplighter description of $\Gamma_1$ to illustrate right-multiplication by the generators $\lambda_r$ and their inverses.  The middle line represents $g = (f,k)$ and the top and bottom represent $g \lambda_r$ and $g \lambda^{-1}_r$, respectively.} \label{action on Gamma1}
\end{figure}

\begin{remark}
Perhaps the one subtlety in the above proof is that the edge in $\mathcal{T}_R$ from $v$ to $v'$ is labeled by  $f_{k} +r$.  The first guess one might make  is that it would be the edge labeled $r$.   But that  would not work because $(u',v')$ has to have some ``memory'' of $f_k$, else there would be no way for $\Phi^{-1}((u',v') \lambda_r^{-1})$ to equal $\Phi^{-1}(u,v)$. 
\end{remark}

\begin{remark}
In this rank-$1$ case we could use any group $G$ in place of the ring $R$, and identify a  Cayley graph of the (restricted) wreath product $G \wr \Z$ as a horocyclic product.  Specifically, view  elements of $G \wr \Z$ as pairs $(p, k)$ where $k \in \Z$ and $p$ is a finitely supported function $\Z \to G$, and let $p_g$ denote  the map sending $1 \mapsto g$ and $i\mapsto 1_G$ for all $i\neq 1$.  Then the Cayley graph of   $G \wr \Z$
  with respect to the generating set $\set{ \lambda_g := (p_g,1) \mid g \in G}$ is the horocyclic product of two $G$-branching trees.   This appears to break down in higher rank where we would need $G$ to be abelian (e.g.\ to define the lamplighter description in Section~\ref{Gamma_2 lamplighter}).
  \end{remark}

\section{The $n=2$ case of Theorem~\ref{main}} \label{rank 2}

In this section we will prove Theorem~\ref{main} when $n=2$:  the 1-skeleton of $\mathcal{H}_2(R)$ is the Cayley graph  of $\Gamma_2(R)$ with respect to the generating set    $$\set{   \left. \lambda_r := (r, \e_0), \ \mu_r := (r, \e_1), \ \nu_r := \lambda_r {\mu_r}^{-1}  \, \right| \, r \in R \, }.$$

This case includes Baumslag and Remeslennikov's metabelian group, which is $\Gamma_2$.

\subsection{A lamplighter model for $\Gamma_2(R)$.}  \label{Gamma_2 lamplighter}

Recall that $$\Gamma_2(R)  \ = \ R \left[ x, x^{-1}, (1+x)^{-1}\right] \rtimes \Z^{2}$$ where, if the $\Z^2$-factor is $\langle t, s \rangle$, the  actions of $t$ and $s$ are multiplication by $x$ and $1+x$, respectively. 

We will use a \emph{lamplighter description} of $\Gamma_2$  developed  from \cite{BNW} and \cite{CRwithcorrection}.   A lamplighter is located at a  \emph{lattice point} in a skewed rhombic $\Z^2 = \langle t,s \rangle$  grid, as in Figure~\ref{propagation}. (The lattice points are the vertices of the tessellation of the plane by unit equilateral triangles.)  Each vertex has six closest neighbours---one in each of what we will call the $s$-, $s^{-1}$-, $t$-, $t^{-1}$-, $st^{-1}$- and $s^{-1}t$-directions---and can be specified using $t$- and $s$-coordinates.    A \emph{configuration}  $\mathcal{K}$  is a finitely supported assignment of an element of $R$ to each  lattice point.   
 
Figure~\ref{propagation} shows six examples of configurations where $R = \Z$.  Vertices where no element of $R$ is shown should be understood to be assigned zeroes.  As an example of the terminology in action, the integer at $(-2,1)$ in grid (5) is $4$ and its neighbours in the  $s$-, $s^{-1}$-, $t$-, $t^{-1}$-, $st^{-1}$- and $s^{-1}t$-directions are  $0$, $2$, $6$, $1$, $0$, and $-4$, respectively.  

We define  an equivalence relation $\sim$ on configurations by setting $\mathcal{K} \sim \mathcal{K}'$ when there is a finite sequence of configurations starting with $\mathcal{K}$ and ending with $\mathcal{K}'$ in which each configuration  differs from the next only in one  triangle of adjacent ring elements which is $\T{a}{b}{c}$ in one and is $\T{a-r}{b+r}{c+r}$ for some $r \in R$ in the other.  The six integer-configurations shown in Figure~\ref{propagation} are all equivalent, for example.

 \begin{figure}[ht]
 \psfrag{1}{\tiny{${1}$}}   \psfrag{2}{\tiny${2}$}  \psfrag{3}{\tiny${3}$} \psfrag{4}{\tiny${4}$}   \psfrag{5}{\tiny${5}$}   \psfrag{6}{\tiny${6}$}
  \psfrag{7}{\tiny${7}$}  \psfrag{u}{\tiny${10}$} 
  \psfrag{x}{\tiny${2-2}$}
 \psfrag{a}{\tiny${-1}$}   \psfrag{b}{\tiny${-2}$}  \psfrag{c}{\tiny${-3}$} \psfrag{d}{\tiny${-4}$}   \psfrag{e}{\tiny${-5}$}   \psfrag{f}{\tiny${-6}$} 
  \psfrag{y}{\tiny{$4+1$}}
  \psfrag{t}{${t}$}
  \psfrag{s}{${s}$}
    \psfrag{A}{(1)}     \psfrag{B}{(2)}     \psfrag{C}{(3)}    \psfrag{D}{(4)}    \psfrag{E}{(5)}    \psfrag{F}{(6)}
 \centerline{\epsfig{file=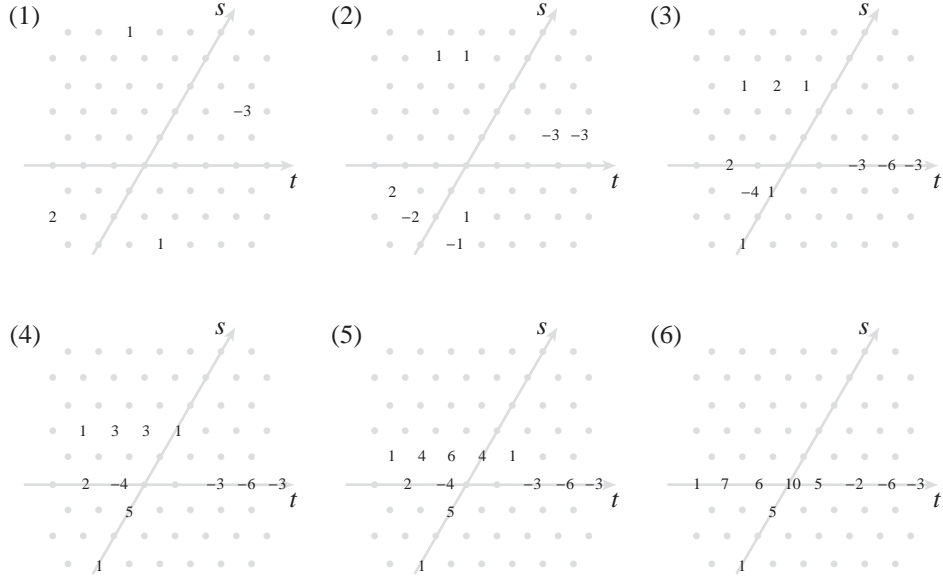}} \caption{An example of propagation to a configuration supported on $L_{0,0}$.} \label{propagation}
\end{figure}

An element $f = \sum_{i,j \in \Z} n_{i, j} x^{i}(1+x)^{j}$ of $R \left[ x, x^{-1}, (1+x)^{-1}\right]$ corresponds to 
the  configuration which  has  $n_{i, j}$ at $(i, j)$ for all $i,j \in \Z$.  A  motivating result for these definitions is---

\begin{lemma}  \label{polynomials versus configurations}
 Two such polynomials represent the same element of  $R \left[ x, x^{-1}, (1+x)^{-1}\right]$ if and only if their corresponding configurations are equivalent.  
 \end{lemma}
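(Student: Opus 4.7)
The plan is to handle the two directions separately. For the forward direction (equivalent configurations give equal polynomials), I check that the elementary move is polynomial-preserving: if the triangle $\T{a}{b}{c}$ occupies positions $(i, j+1), (i, j), (i+1, j)$ (top, bottom-left, bottom-right, respectively), then replacing it by $\T{a-r}{b+r}{c+r}$ changes the associated polynomial by $r\, x^i(1+x)^j\bigl(-(1+x) + 1 + x\bigr) = 0$. So $\sim$-equivalent configurations have the same image in $A_2(R)$.

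For the converse, I reduce to showing that if a finitely supported configuration $\mathcal{M}$ (say $\mathcal{M} = \mathcal{K} - \mathcal{K}'$) has vanishing associated polynomial in $A_2(R)$, then $\mathcal{M} \sim 0$. The strategy is to propagate $\mathcal{M}$ into a single row using the elementary moves. Setting $r = a$ in $\T{a}{b}{c} \to \T{a-r}{b+r}{c+r}$ gives the basic push-down rule: an entry $a$ at $(i, j+1)$ is $\sim$-equivalent to an entry $a$ at each of $(i, j)$ and $(i+1, j)$. An easy induction shows that applying $j + N$ such push-downs transforms a single entry $a$ at $(i, j)$ into $\binom{j+N}{k} a$ at position $(i+k, -N)$ for each $0 \leq k \leq j+N$. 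Choosing $N$ large enough that $\mathcal{M}$ is supported in rows $j \geq -N$, I propagate every entry down to obtain $\mathcal{M} \sim \widetilde{\mathcal{M}}$, where $\widetilde{\mathcal{M}}$ is supported only on row $-N$, with entries equal to the Laurent coefficients of $\sum_{i,j} m_{i,j}\, x^i(1+x)^{j+N} \in R[x, x^{-1}]$.

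To close the argument, I invoke the fact that $1+x$ is a non-zero-divisor in $R[x, x^{-1}]$: if $(1+x) f = 0$ with $f = \sum r_i x^i$, then $r_i = -r_{i-1}$ for all $i$, which forces $f = 0$ by finite support. Hence the localization map $R[x, x^{-1}] \hookrightarrow A_2(R)$ is injective. Since the associated polynomial of $\mathcal{M}$ vanishes in $A_2(R)$, multiplying by $(1+x)^N$ still gives $0$ in $A_2(R)$, hence $0$ in $R[x, x^{-1}]$---so every Laurent coefficient is zero and $\widetilde{\mathcal{M}}$ is the zero configuration. Therefore $\mathcal{M} \sim 0$, as required. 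The main obstacle will be the bookkeeping in the propagation step: verifying carefully by induction that iterating the push-down $j+N$ times produces exactly the claimed binomial distribution on row $-N$, and that this coincides term by term with the Laurent expansion of $\sum_{i,j} m_{i,j}\, x^i(1+x)^{j+N}$.
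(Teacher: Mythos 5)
Your proposal is correct. The forward direction (triangle moves preserve the associated polynomial, via $r\,x^i(1+x)^j\bigl(-(1+x)+1+x\bigr)=0$) is exactly the computation the paper records. Where you genuinely diverge is the converse. The paper's proof is a two-sentence assertion that the $R$-module relations among the elements $x^i(1+x)^j$ of $R[x,x^{-1},(1+x)^{-1}]$ are \emph{generated} by $(1+x)=1+x$, which is precisely the nontrivial content of the ``only if'' direction and is left unargued there. You supply an actual proof: reduce to showing that a configuration $\mathcal{M}$ with vanishing polynomial satisfies $\mathcal{M}\sim 0$ (legitimate, since the generating moves are translation-invariant under addition of configurations), push everything down to a single row $-N$ by the rule $\T{a}{b}{c}\sim\T{0}{b+a}{c+a}$, identify the resulting row with the Laurent coefficients of $(1+x)^N$ times the polynomial, and then kill those coefficients by observing that $1+x$ is a non-zero-divisor in $R[x,x^{-1}]$, so the localization map $R[x,x^{-1}]\to A_2(R)$ is injective. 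All of these steps check out, including the binomial bookkeeping ($m$ push-downs of an entry $a$ at $(i,j)$ yield $\binom{m}{k}a$ at $(i+k,j-m)$, matching the expansion of $x^i(1+x)^j=\sum_k\binom{m}{k}x^{i+k}(1+x)^{j-m}$) and the finite-support argument that $(1+x)f=0$ forces $f=0$. Your normal-form-on-a-row strategy is essentially the mechanism the paper only develops later (propagation, Lemma~\ref{projection properties}, Corollary~\ref{basis}, and the basis Lemma~\ref{basis lemma}); importing it here buys a self-contained and more rigorous proof of the lemma at the cost of some bookkeeping the paper defers.
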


\begin{proof}
The relations in $R \left[ x, x^{-1}, (1+x)^{-1}\right]$ are generated by  $(1+x)$ being the sum of the terms $1$ and $x$ in a manner that corresponds to the relations between configurations being generated by altering triangles of entries.  Indeed, multiplying $(1+x) =1+x$ through by $rx^{i}(1+x)^j$ gives $rx^{i}(1+x)^{j+1}=  rx^{i}(1+x)^{j} + rx^{i+1}(1+x)^{j}$, which corresponds to $\T{a+r}{b}{c} \sim \T{a}{b+r}{c+r}$ at a suitably located triangle of entries in a configuration.
\end{proof} 

The element  $g   =    (f, (k,l))   \in   \Gamma_2(R)$  corresponds to the lamplighter being located at $(k,l)$ and the configuration being that associated to $f$.   
 
 An appealing feature of this model is how it elucidates the way in which  $\Gamma_1(R)$ sits inside $\Gamma_2(R)$ (e.g. $\Z \wr \Z$ sits inside Baumslag and Remeslennikov's  group $\Gamma_2$) as the elements for which the lamplighter is on the $t$-axis and the configuration is equivalent to one that is supported on the $t$-axis.   

\begin{defn} \label{half-plane}
Using $t$- and $s$-coordinates, define the half-planes 
\begin{align*}
H_m^{\infty}    \  := \  & \set{ (p,q) \mid p+q \geq m },  \\
H_m^{0}    \ :=  \  & \set{ (p,q) \mid p \leq m },  \\ 
H_m^{1}    \ := \  & \set{ (p,q) \mid q \leq m }.    
\end{align*} 

 For example, Figure~\ref{Projections} displays  $H^{\infty}_{h_0+h_1}$, $H^{0}_{h_0-1}$ and $H^{1}_{h_1-1}$.  
 
 Our analyses will involve finding opportune representatives  in the equivalence classes of given configurations.  Indeed, we will in some instances (in Section~\ref{n=2 proof section}) be concerned  only with the part of a configuration in some  half-plane.  The following definition will then be useful.  
 
 \emph{Propagating to level $\ell$ in $H^{\infty}_m$} means converting a configuration to an equivalent configuration such that  the only non-zero entries in  $H^{\infty}_m$ are on the line with $s$-coordinate $\ell$.  This can always be done by moving the entries in  $H^{\infty}_m$ that are  above that line by using $\T{a}{b}{c}  \sim \T{0}{a+b}{a+c}$  and moving those below by using  $ \T{a}{b}{c} \sim \T{a+c}{b-c}{0}$.  
\emph{Propagating to level $\ell$ in $H^{0}_m$}  means converting to an equivalent configuration such that  the  only non-zero entries  in $H^{0}_m$  are on the line with $s$-coordinate $\ell$.  This can be done  using $\T{a}{b}{c}  \sim \T{0}{a+b}{a+c}$  and    $ \T{a}{b}{c} \sim \T{a+b}{0}{c-b}$  for entries above and below the line, respectively. 
And \emph{propagating to level $\ell$ in $H^{1}_m$}  means converting to an equivalent configuration such that  the  only non-zero entries  in
 $H^{1}_m$  are on the line with $t$-coordinate $\ell$.  This can be done using $\T{a}{b}{c} \sim \T{a+b}{0}{c-b}$ and  $\T{a}{b}{c}  \sim \T{a+c}{b-c}{0}$  for entries on the left and the right of the line, respectively.  
 
 In each case, propagation produces  a finitely supported sequence, namely the entries in level $\ell$ of the half-plane concerned.
   For example,  in Figure~\ref{propagation}  propagating the integer-configuration  (1) to level $0$  in $H^{\infty}_{0}$, $H^{0}_{-1}$ and $H^{1}_{-1}$  yields configurations which can be read off (6), specifically,   $10, 5, -2, -6, -3, 0, 0, \ldots$ in  $H^{\infty}_{0}$,  $6, 7, 1, 0, 0, \ldots$  in $H^{0}_{-1}$, and $5, 0, 1, 0, 0, \ldots$ in $H^{1}_{-1}$.   And in Figure~\ref{example Phi}, the configuration in the centre grid propagated to level $0$ yields  $5, 3, 4, 2, 0, 0, \ldots$ in $H^{\infty}_{3}$, $18, 5, 1, 0, 0, \ldots$ in $H^{0}_{0}$, and $2, 3, 0, 1, 0, 0, \ldots$ in $H^{1}_{1}$.  \end{defn}

\begin{figure}[ht]
\psfrag{s}{$s$} \psfrag{t}{$t$} \psfrag{i}{$h_0$} \psfrag{j}{$h_1$}   \psfrag{+}{$h_0+h_1$} 
\psfrag{8}{$\mathbf{a}^{\infty}$}
 \psfrag{0}{$\mathbf{a}^{0}$}
  \psfrag{1}{$\mathbf{a}^{1}$}
  \psfrag{a}{$\mathbf{b}^{\infty}$}
 \psfrag{b}{$\mathbf{b}^{0}$}
  \psfrag{c}{$\mathbf{b}^{1}$}
    \psfrag{x}{$H^{\infty}_{h_0+h_1}$}
 \psfrag{y}{$H^{0}_{h_0-1}$}
  \psfrag{z}{$H^{1}_{h_1-1}$}
 \centerline{\epsfig{width= 5 in, file=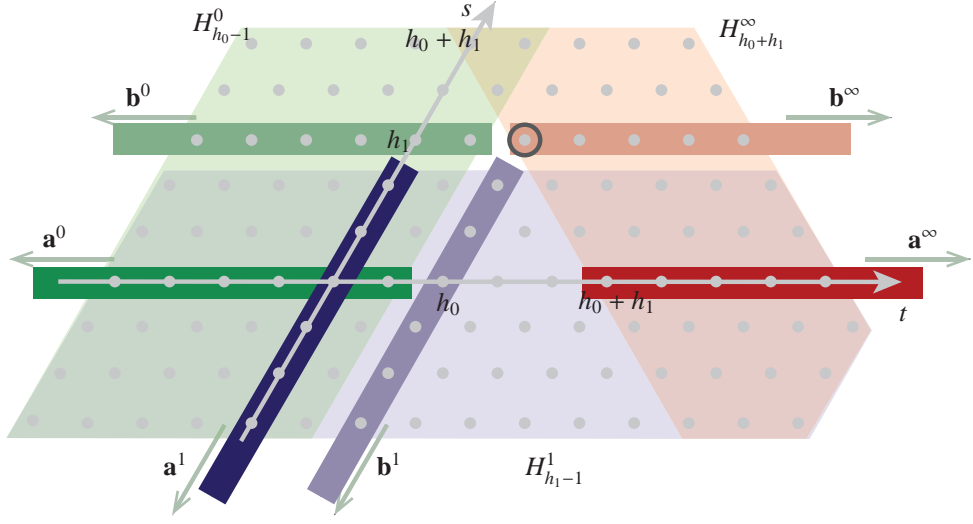}} \caption{Propagation in  the half-planes $H^{\infty}_{h_0+h_1}$, $H^{0}_{h_0-1}$ and   $H^{1}_{h_1-1}$.  Propagation to levels $h_1$, $h_1$ and $h_0$, respectively, is illustrated  using lighter colors. Propagation to level $0$ in each half-plane  is illustrated using darker colors. } \label{Projections}
\end{figure}

The following properties of propagation  may at first seem surprising because it is not immediately apparent that   
the entries outside $H^{\ast}_m$  are of no consequence for the sequence produced by propagation. 

\begin{lemma}  \label{projection properties}
For $\ast = \infty, 0,1$ and for all $\ell, \ell' \in \Z$ the following hold.  
\vspace*{-2mm}
\begin{enumerate}
\item  
Any two equivalent configurations which are both zero everywhere in $H^{\ast}_m$ aside from level $\ell$, are in fact  equal on level $\ell$ in $H^{\ast}_m$.  (So propagation of a configuration to level $\ell$ in $H^{\ast}_m$ determines a unique sequence and propagating any  two  equivalent configurations to level $\ell$ in $H^{\ast}_m$ produces the same sequence.)  
\item If propagating a configuration $\mathcal{K}$ to  level $\ell$ in $H^{\ast}_m$ produces the sequence $a_1, a_2, \ldots$, then  $a_p$, for $p=1, 2, \ldots$, depends only on the restriction of $\mathcal{K}$ to 
$$\begin{cases}
H^{\infty}_{m+p-1} & \mbox{ if } \ast = \infty  \\
H^{0}_{m-p+1} & \mbox{ if } \ast = 0 \\
H^{1}_{m-p+1} & \mbox{ if } \ast = 1.
\end{cases}$$  
\item The following defines a bijection on the set of  finitely supported integer sequences.  Given such a sequence, take the configuration which is everywhere-zero aside from level $\ell$ of $H^{\ast}_m$  where one reads the sequence, and obtain a new sequence by  propagating to level $\ell'$ in $H^{\ast}_m$.  Indeed, this map is inverted by propagating back to  level $\ell$.     
\end{enumerate}
\end{lemma}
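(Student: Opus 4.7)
My plan is to translate all three parts into statements about the polynomial ring $R[x,x^{-1},(1+x)^{-1}]$ via Lemma~\ref{polynomials versus configurations} and then to deduce them from elementary degree and valuation arguments. Writing $A(x)=\sum_{p} a_p\, x^{p}$ for the Laurent polynomial recording the level-$\ell$ entries within $H^{\ast}_m$ (or $A(1+x)$ in the case $\ast = 1$), and $C=\sum c_{i,j}\,x^i(1+x)^j$, indexed over $(i,j)\notin H^{\ast}_m$, for the off-$H^{\ast}_m$ entries, the existence part of propagation asserts that every element of the ring admits a decomposition of the shape $(1+x)^{\ell}A(x)+C$ for $\ast=\infty,0$, and $x^{\ell}A(1+x)+C$ for $\ast=1$. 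Part (i) is then the uniqueness of this decomposition, part (ii) is a local-dependence statement for the coefficients of $A$, and part (iii) is the induced bijection between sequences at two different levels.

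\textbf{Part (i).} Suppose $(1+x)^{\ell}A(x)+C=0$ in the ring and multiply by $x^M(1+x)^N$ for $M,N$ large enough that both terms land in $R[x]$. In case $\ast=\infty$, the constraint $i+j\le m-1$ bounds the $x$-degree of $x^M(1+x)^N C$ by $M+N+m-1$, while $x^M(1+x)^{N+\ell}A(x)$ has $x$-degree equal to $M+N+\ell+\deg A$ (the leading coefficient of the product is that of $A$, and $1$ is a non-zero-divisor). Hence $\deg A\le m-\ell-1$; but any nonzero monomial of $A$ has degree $\ge m-\ell$, so $A=0$. For $\ast=0$ the identical argument applies with ``degree'' replaced by ``$x$-valuation'': the off-$H^0_m$ part has $x$-valuation $\ge m+1$ while the level-$\ell$ part has $x$-valuation $\le m$. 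For $\ast=1$ use the $(1+x)$-adic valuation on $R[x]$, which is well-defined because $1+x$ is monic (hence a non-zero-divisor) and coprime to $x$ in $R[x]$ via $1=(1+x)-x$; the same dichotomy $\le m$ versus $\ge m+1$ forces $A=0$.

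\textbf{Part (ii).} By linearity (and (i)) it suffices to show: any configuration supported outside $H^{\infty}_{m+p-1}$, respectively outside $H^{0}_{m-p+1}$ or $H^{1}_{m-p+1}$, propagates to a sequence with $a_p=a_{p+1}=\cdots=0$. The support hypothesis directly bounds the relevant degree or valuation of the polynomial of the configuration; for instance in the case $\ast=\infty$, after multiplication by $x^M(1+x)^N$, the degree is $\le m+p-2+M+N$. The $C$-part of the level-$\ell$ decomposition still contributes only degree $\le m-1+M+N$, so by subtraction $x^M(1+x)^{N+\ell}A(x)$ has degree $\le m+p-2+M+N$, forcing $\deg A\le m-\ell+p-2$. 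Since $a_p$ is the coefficient of $x^{m-\ell+p-1}$ in $A$, it vanishes, and so do all subsequent entries. The cases $\ast=0,1$ are handled symmetrically with the appropriate valuation.

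\textbf{Part (iii).} Let $\phi_{\ell\to\ell'}$ denote the stated map; it is well-defined by (i), because the level-$\ell'$ entries of any decomposition of the input polynomial are uniquely determined. Given $a$, write $b=\phi_{\ell\to\ell'}(a)$, so that propagation exhibits an equality
\[
(1+x)^{\ell}A(x) \ = \ (1+x)^{\ell'}B(x) + C
\]
in the ring, where $A,B$ encode $a,b$ and $C$ is the off-$H^{\ast}_m$ residue. Rewriting this as $(1+x)^{\ell'}B(x)=(1+x)^{\ell}A(x)+(-C)$ exhibits a level-$\ell$ decomposition of $(1+x)^{\ell'}B(x)$, and this is precisely the polynomial from which $\phi_{\ell'\to\ell}$ starts, namely the configuration ``$b$ on level $\ell'$, zero elsewhere''. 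By the uniqueness in (i) we conclude $\phi_{\ell'\to\ell}(b)=a$. The main obstacle is exactly the subtle point here: propagation generically introduces nonzero entries outside $H^{\ast}_m$, so a priori one appears to lose information by forgetting them, but the uniqueness of the level-$\ell$ decomposition of the new polynomial $(1+x)^{\ell'}B(x)$ guarantees that the reverse propagation regenerates those entries (as $-C$) without needing them as explicit input.
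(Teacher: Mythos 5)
Your proof is correct, but it takes a genuinely different route from the paper's. The paper argues combinatorially: it recalls that the equivalence relation on configurations is generated by moves altering a single triangle of adjacent entries, checks that each such move leaves the propagated level-$\ell$ sequence unchanged (giving (i)), reads off (ii) from the directions in which entries travel under the two propagation rules, and disposes of (iii) by observing it when $\abs{\ell-\ell'}=1$. You instead pass through Lemma~\ref{polynomials versus configurations} to the ring $R[x,x^{-1},(1+x)^{-1}]$ and run a leading-coefficient/valuation argument: after clearing denominators by a monic monomial $x^M(1+x)^N$, the level-$\ell$ part and the off-half-plane part occupy disjoint ranges of degree (resp.\ $x$-adic or $(1+x)$-adic order), and since monic factors preserve extremal coefficients over any commutative ring with unity, the level-$\ell$ data is forced. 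Your argument is more self-contained and quantitative than the paper's sketch --- in particular (ii) falls out of the same degree bound rather than an appeal to ``directions of movement,'' and (iii) is a clean consequence of the uniqueness of the decomposition applied to $(1+x)^{\ell'}B(x)=(1+x)^{\ell}A(x)-C$ --- and it anticipates the polynomial-coefficient technique the paper only deploys later for general $n$ (Lemmas~\ref{First bijection lemma} and~\ref{Second bijection lemma}). What it gives up is the geometric picture of entries flowing toward a line, which the paper leans on again when computing $\Phi(g\zeta)$ in Section~\ref{n=2 proof section}. Two very minor points: for the bijectivity claim in (iii) you should note that the identity $\phi_{\ell'\to\ell}\circ\phi_{\ell\to\ell'}=\mathrm{id}$ also holds with $\ell$ and $\ell'$ interchanged (immediate by symmetry), and your justification of the $(1+x)$-adic order is most transparently phrased as the change of variables $R[x]=R[1+x]$ together with the fact that $x=(1+x)-1$ has unit lowest coefficient, so multiplication by $x^M$ does not raise the order.
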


\begin{proof}
We will explain only the case $\ast = \infty$. The cases $\ast = 0, 1$ are similar.  

For (\emph{i}), recall that the equivalence relation on configurations is generated by equivalences in which a triangle of only three adjacent entries is altered.   Such alterations   do not change the  sequence obtained by propagating to level $\ell$ in  $H^{\infty}_m$  by moving those above the level  using $\T{a}{b}{c}  \sim \T{0}{a+b}{a+c}$     and moving those below by using   $ \T{a}{b}{c} \sim \T{a+c}{b-c}{0}$.  
Consideration of the directions in which entries are moved  by these two types of equivalences  leads  to (\emph{ii}).  For (\emph{iii}) observe that the result is true when $\abs{\ell-\ell'}=1$.  
\end{proof}

\begin{cor}\label{basis}
For all $k,l  \in \Z$, each configuration is equivalent to a unique configuration   supported on
 $$L_{k,l} \ :=  \ \set{ \,  (i,l) \,  \mid \,  i \in \Z \, } \ \cup \ \set{ \, (k,l-1), (k,l-2), \ldots   \, },$$
 specifically, that obtained by simultaneously propagating to level  $l$ in $H^{\infty}_{k+l}$ and $H^{0}_{k-1}$ and to level  $k$ in   $H^{1}_{l-1}$.
\end{cor}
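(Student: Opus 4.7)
The plan is to treat $L_{k,l}$ as the disjoint union of three pieces, each being the target level of one of the three half-planes: $\{(p,l) : p \geq k\}$ is row $l$ inside $H^{\infty}_{k+l}$; $\{(p,l) : p \leq k-1\}$ is row $l$ inside $H^{0}_{k-1}$; and $\{(k,q) : q \leq l-1\}$ is column $k$ inside $H^{1}_{l-1}$. Moreover any $(p,q) \in \Z^2$ outside all three half-planes would satisfy $p+q \leq k+l-1$, $p \geq k$, and $q \geq l$, which is impossible; so the three half-planes cover the lattice.

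For \textbf{uniqueness}, suppose $\mathcal{K}'$ and $\mathcal{K}''$ are equivalent configurations both supported on $L_{k,l}$. In each of the three half-planes, each of $\mathcal{K}'$ and $\mathcal{K}''$ is supported on the corresponding target level, so Lemma~\ref{projection properties}(i) forces them to agree there. Since every vertex of $L_{k,l}$ lies on the target level of some half-plane and both configurations vanish off $L_{k,l}$, we conclude $\mathcal{K}' = \mathcal{K}''$.

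For \textbf{existence}, propagate $\mathcal{K}$ in three stages: first to level $l$ in $H^{\infty}_{k+l}$, then to level $l$ in $H^{0}_{k-1}$, and finally to level $k$ in $H^{1}_{l-1}$. Each stage can be carried out by moves on triangles lying inside the relevant half-plane, so it leaves entries outside that half-plane unchanged. Tracking supports: Stage~1 confines the support inside $H^{\infty}_{k+l}$ to row $l$. Stage~2 acts only inside $H^{0}_{k-1}$, so the region $p \geq k$ is untouched; combined with the fact that $\{q \geq l+1\} \subseteq H^{\infty}_{k+l} \cup H^{0}_{k-1}$, after Stages~1 and~2 there is no support above row $l$. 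Stage~3 acts only inside $H^{1}_{l-1}$, so it preserves row $l$ while confining the remaining support in $H^{1}_{l-1}$ to column $k$. Thus the resulting configuration is supported on $L_{k,l}$, and Lemma~\ref{projection properties}(ii) identifies its values on each piece with the stated propagated sequences.

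The main obstacle is justifying that each stage of propagation can be effected purely by moves on triangles contained in the relevant half-plane, including the awkward case of entries on the half-plane's boundary antidiagonal or column. For such an entry one must locate a triangle within the half-plane that allows the entry to be zeroed out while redistributing only to neighbours that remain inside; once this is done, iteration drives all in-half-plane support onto the target level without disturbing the progress made in earlier stages.
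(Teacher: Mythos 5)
Your uniqueness argument is correct and is exactly what the corollary's phrasing intends: the three target levels partition $L_{k,l}$, a configuration supported on $L_{k,l}$ is, within each of the three half-planes, supported on the corresponding target level, and Lemma~\ref{projection properties}(i) then pins down the values there. The existence half, however, rests on a claim that is false, and which you yourself flag as ``the main obstacle'' without resolving it: propagation to a level in $H^{\ast}_m$ \emph{cannot} in general be carried out by moves on triangles lying inside $H^{\ast}_m$. The elementary moves are supported on triangles $\set{(i,j+1),(i,j),(i+1,j)}$, and when you push an entry sitting on the boundary antidiagonal of $H^{\infty}_m$ (say at $(i,j)$ with $i+j=m$, $j>\ell$) down towards level $\ell$, the triangle used has its vertex $(i,j-1)$ outside the half-plane, so part of the entry leaks out. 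The only triangle containing such a boundary point that lies entirely in $H^{\infty}_m$ is the one with $(i,j)$ as its bottom-left vertex, and that move pushes mass \emph{up} to row $j+1$, not down. This is not just an inconvenience: there is an invariant obstruction. Choose weights $w$ on $H^{\infty}_0$ with $w(p,0)=0$ for $p\ge 0$, $w(-1,1)=1$, $w(p,q)=0$ for $q<0$, and extend upwards by the Pascal rule $w(i,j+1)=w(i,j)+w(i+1,j)$ wherever the triangle lies in $H^{\infty}_0$ (the boundary values $w(-q,q)$ are unconstrained and may be set freely). The functional $\mathcal{K}\mapsto\sum_{(p,q)\in H^{\infty}_0}w(p,q)\mathcal{K}(p,q)$ is then invariant under all in-half-plane moves, vanishes on any configuration whose restriction to $H^{\infty}_0$ lies on row $0$, but equals $1$ on the configuration consisting of a single $1$ at $(-1,1)$. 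So that configuration cannot be propagated to level $0$ in $H^{\infty}_0$ by in-half-plane moves, and your Stage~1, as justified, fails; consequently the assertions that Stage~2 leaves the region $p\ge k$ untouched and that Stage~3 preserves row $l$ are also unfounded (indeed the up-moves in $H^{0}_{k-1}$ deposit residues in column $k$, and the moves in $H^{1}_{l-1}$ deposit entries on row $l$).

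The staged argument can be repaired, but only by embracing the leakage and tracking where it lands: one checks that the entries expelled from $H^{\infty}_{k+l}$ during Stage~1 land in regions subsequently cleaned by Stages~2 and~3, and that the residues expelled by Stages~2 and~3 land in column $k$ below row $l$ and in row $l$ at $t$-coordinates $\ge k$ respectively --- both subsets of $L_{k,l}$, hence harmless. The paper itself sidesteps this bookkeeping: by Lemma~\ref{polynomials versus configurations} the existence statement is exactly the assertion that $\set{x^{k+i}(1+x)^l \mid i\in\Z}\cup\set{x^k(1+x)^{l+j}\mid j<0}$ spans $R[x,x^{-1},(1+x)^{-1}]$, which is the $n=2$ case of the spanning half of Lemma~\ref{basis lemma} (after multiplying by the unit $x^k(1+x)^l$); uniqueness then identifies the resulting configuration with the one described by the three propagations, via Lemma~\ref{projection properties}(i) as in your argument.
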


 \begin{figure}[ht]
 \psfrag{1}{\tiny{${1}$}}   \psfrag{2}{\tiny${2}$}  \psfrag{3}{\tiny${3}$} \psfrag{4}{\tiny${4}$}   \psfrag{5}{\tiny${5}$}   \psfrag{6}{\tiny${6}$}
  \psfrag{7}{\tiny${7}$}  \psfrag{u}{\tiny${10}$} \psfrag{l}{\tiny${11}$} \psfrag{?}{\tiny${18}$}  \psfrag{m}{\tiny${25}$}
  \psfrag{x}{\tiny${2-2}$}
 \psfrag{a}{\tiny${-1}$}   \psfrag{b}{\tiny${-2}$}  \psfrag{c}{\tiny${-3}$} \psfrag{d}{\tiny${-4}$}   \psfrag{e}{\tiny${-5}$}   \psfrag{f}{\tiny${-6}$} 
  \psfrag{y}{\tiny{$4+1$}}
  \psfrag{t}{${t}$}
  \psfrag{s}{${s}$}
    \psfrag{A}{(1)}     \psfrag{B}{(2)}     \psfrag{C}{(3)}    \psfrag{D}{(4)}    \psfrag{E}{(5)}    \psfrag{F}{(6)}
    \psfrag{i}{$i$} \psfrag{j}{$j$}   \psfrag{+}{$i+j$} 
\psfrag{X}{$\mathbf{a}^{\infty}$}
 \psfrag{Y}{$\mathbf{a}^{0}$}
  \psfrag{Z}{$\mathbf{a}^{1}$}
  \psfrag{p}{$\mathbf{b}^{\infty}$}
 \psfrag{q}{$\mathbf{b}^{0}$}
  \psfrag{r}{$\mathbf{b}^{1}$}
    \psfrag{x}{$H^{\infty}_{3}$}
 \psfrag{y}{$H^{0}_{0}$}
  \psfrag{z}{$H^{1}_{1}$} 
     \centerline{\epsfig{file=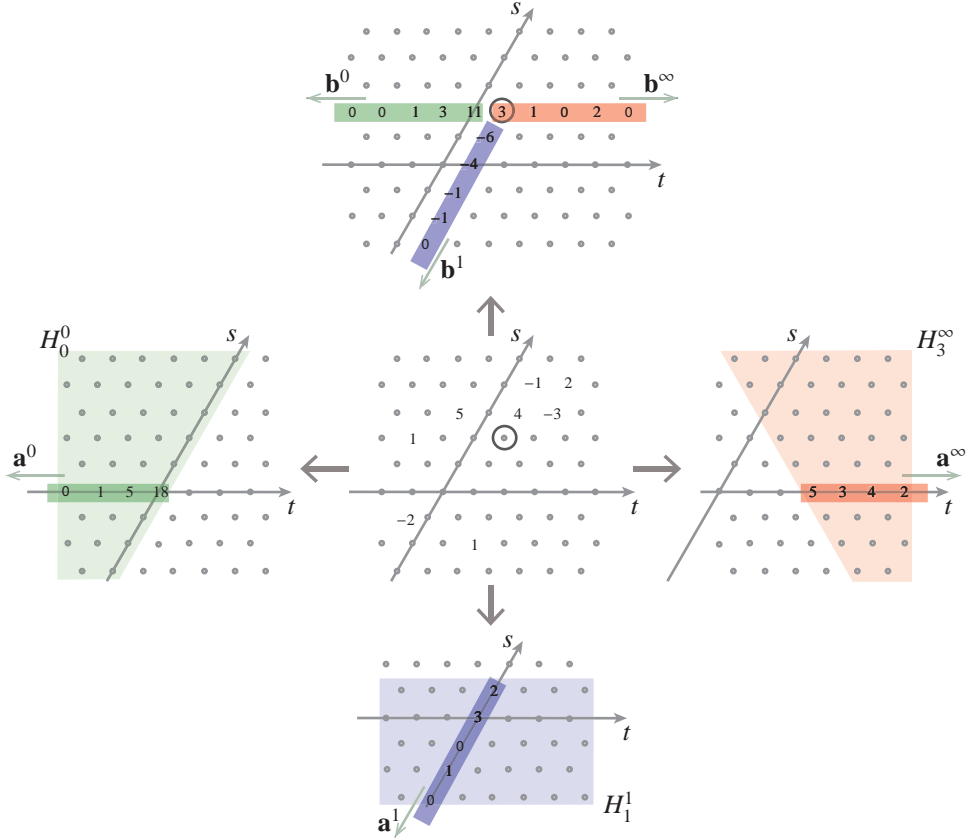}} \caption{An example of a calculation of $\Phi(g)$, where $g$ is the element of $\Gamma_2$ represented on the central grid.  The lamplighter is at   $(1,2)$, so $h_{\infty} = -1-2=-3$, $h_{0}=1$, and $h_1=2$.  The right, left, and lower grid illustrate the calculation of $\mathbf{a}^{\infty} = (5,3,4,2,0,0, \ldots)$,  $\mathbf{a}^{0} = (18,5,1,0,0, \ldots)$, and  $\mathbf{a}^{1} = (2,3,0,1,0,0, \ldots)$, respectively, by propagation to level $0$ in $H_3^{\infty}$, $H_0^{0}$, and $H_1^{1}$.   The upper grid illustrates a configuration   which is  supported on $L_{1,2}$, is equivalent to that of the central grid, and  yields the sequences   $\mathbf{b}^{\infty} = (3,1,0,2,0,0, \ldots)$,  $\mathbf{b}^{0} = (11,3,1,0,0, \ldots)$, and  $\mathbf{b}^{1} = (-6,-4,-1,-1,0,0, \ldots)$, which feature in our proof  of case $n=2$ of Theorem~\ref{main}. } \label{example Phi}
\end{figure}
 
In the light of Lemma~\ref{polynomials versus configurations}, when  $k= l=0$ this says that
 $$\set{ \left. \,  1, x^{j},  x^{-j}, (1+x)^{-j} \, \right| \,  j   = 1, 2, \ldots   \, }$$ is a basis for $R[x,x^{-1}, (1+x)^{-1}]$ over $R$.  (This is a special case of Lemma~\ref{basis lemma}.)
 
Figure~\ref{propagation} shows an example of such a propagation with $k=\ell=0$, and  the transition from the central grid to the top grid in Figure~\ref{example Phi}  is an example with $k=1$ and $\ell =2$.

\subsection{Proof of Theorem~\ref{main} in the case $n=2$} \label{n=2 proof section}
 
We are now ready to show that the 1-skeleton of   $\mathcal{H}_2(R)$  is the Cayley graph $\mathcal{C}$ of   $\Gamma_2(R)$ with respect to   $$\set{   \left. \lambda_r := (r, \e_0), \ \mu_r := (r, \e_1), \ \nu_r := \lambda_r {\mu_r}^{-1}  \, \right| \, r \in R \, }.$$ 

We will denote a vertex in $\mathcal{H}_2(R)$  by a triple of vertices in $\mathcal{T}_R$, each designated by their addresses in the sense of Lemma~\ref{addresses}.
First we will establish a bijection $\Phi$ from $\Gamma_2(R)$ to the vertices of $\mathcal{H}_2(R)$, defined by sending $g = (f, (h_0,h_1)) \in \Gamma_2(R)$ to the vertex $((\mathbf{a}^{\infty},h_{\infty}), (\mathbf{a}^0, h_0), (\mathbf{a}^1,h_1) )$ found as follows.  Represent   $f$ using the  lamplighter model as some configuration $\mathcal{K}$.  Let $h_{\infty} =  -h_0-h_1 $. Let $\mathbf{a}^{\infty}$, $\mathbf{a}^{0}$, and $\mathbf{a}^{1}$ be the sequences obtained by (independently) propagating  $\mathcal{K}$ to  level $0$ in the half-planes $H^{\infty}_{h_0+h_1}$,  $H^{0}_{h_0-1}$, and $H^{1}_{h_1-1}$,  respectively---see Figure~\ref{Projections} for a general illustration and  Figure~\ref{example Phi} for a particular example.  

Here is why $\Phi$ is a bijection.  Let  $\mathcal{K}'$  be the configuration of Corollary~\ref{basis} that is equivalent to $\mathcal{K}$  and is supported  on $L_{h_0,h_1}$.  As that corollary points out,  $\mathcal{K}'$ is determined by the sequences  $\mathbf{b}^{\infty}$, $\mathbf{b}^{0}$, and $\mathbf{b}^{1}$ obtained from $\mathcal{K}$ by  propagating  $H^{\infty}_{h_0+ h_{1}}$ and $H^{0}_{h_0-1}$ to level $h_1$,  and    $H^{1}_{h_1-1}$ to level  $h_0$.  But, given $h_0$ and $h_1$, the bijection of Lemma~\ref{projection properties}(\emph{iii})  tells us that $\mathbf{b}^{\infty}$, $\mathbf{b}^{0}$, and $\mathbf{b}^{1}$ are determined by (and determine) $\mathbf{a}^{\infty}$, $\mathbf{a}^{0}$, and $\mathbf{a}^{1}$, respectively. 
So, given any vertex $\mathbf{v} = ((\mathbf{a}^{\infty},h_{\infty}), (\mathbf{a}^0, h_0), (\mathbf{a}^1,h_1))$ in $\mathcal{H}_2(R)$, there is a unique $g= (f, (h_0,h_1))$ such that $\Phi(g) = \mathbf{v}$: specifically, take the $f$ corresponding to $\mathcal{K}'$.
(This is a special case of Proposition~\ref{it's a bijection}.) 
 
  Next we claim that for all $r \in R$,    
 \begin{align*}
\Phi(g\lambda_r) \ = \ & \left(   \,    \left(  \rule{0pt}{13pt} \left(a_2^{\infty}, a_3^{\infty}, \ldots \right), h_{\infty}-1 \right),  \  \     \left( \rule{0pt}{13pt}  \left(r + \alpha, a_1^0, a_2^0, \ldots \right), h_0+1 \right),    \ \  \left(  \rule{0pt}{13pt} \mathbf{a}^1, h_1\right) \,     \rule{0pt}{16pt}\right), \\
\Phi(g{\lambda_r}^{-1}) \ = \ & \left(   \,    \left(  \rule{0pt}{13pt} \left(-r+ \alpha', a_1^{\infty}, a_2^{\infty}, \ldots \right), h_{\infty}+1 \right),  \  \     \left( \rule{0pt}{13pt}  \left(a_2^0, a_3^0, \ldots \right), h_0-1 \right),    \ \  \left(  \rule{0pt}{13pt} \mathbf{a}^1, h_1\right) \,     \rule{0pt}{16pt}\right), \\
\Phi(g\mu_r) \ = \ & \left(   \,   \left(  \rule{0pt}{13pt} \left(a_2^{\infty}, a_3^{\infty}, \ldots \right), h_{\infty}-1 \right), \ \   \left( \rule{0pt}{13pt}  \mathbf{a}^0, h_0 \right),    \ \  \left(  \rule{0pt}{13pt}  \left( (-1)^{h_0}r + \beta, a^1_1, a^1_2, \ldots \right), h_1+1\right)  \,     \rule{0pt}{16pt}\right),   \\
\Phi(g{\mu_r}^{-1}) \ = \ & \left(   \,   \left(  \rule{0pt}{13pt} \left(-r+\beta', a_1^{\infty}, a_2^{\infty}, \ldots \right), h_{\infty}+1 \right), \ \   \left( \rule{0pt}{13pt}  \mathbf{a}^0, h_0 \right),    \ \  \left(  \rule{0pt}{13pt}  \left( a^1_2, a^1_3, \ldots \right), h_1-1\right)  \,     \rule{0pt}{16pt}\right),  \\
\Phi(g\nu_r) \ = \ & \left(   \,  \left(  \rule{0pt}{13pt} \mathbf{a}^{\infty}, h_{\infty} \right), \ \   \left( \rule{0pt}{13pt}  \left( r+\gamma, a_1^0, a_2^0, \ldots \right), h_0+1 \right),    \ \  \left(  \rule{0pt}{13pt} \left(a_2^1, a_3^1, \ldots \right), h_1-1\right) \,     \rule{0pt}{16pt}\right),  \\
\Phi(g{\nu_r}^{-1}) \ = \ & \left(   \,  \left(  \rule{0pt}{13pt} \mathbf{a}^{\infty}, h_{\infty} \right), \ \   \left( \rule{0pt}{13pt}  \left(  a_2^0, a_3^0, \ldots \right), h_0-1 \right),    \ \  \left(  \rule{0pt}{13pt} \left((-1)^{h_0}r+ \gamma', a_1^1, a_2^1, \ldots \right), h_1+1\right) \,     \rule{0pt}{16pt}\right) 
\end{align*}
where  $\alpha$, $\alpha'$, $\beta$, $\beta'$,  $\gamma$, and  $\gamma'$    depend only on $g$ (and not on $r$).

 \begin{figure}[ht]
 \psfrag{1}{\tiny{$\lambda_r$}}
  \psfrag{2}{\tiny{$\lambda^{-1}_r$}}
 \psfrag{5}{\tiny{$\mu_r$}}
  \psfrag{6}{\tiny{$\mu^{-1}_r$}}
 \psfrag{4}{\tiny{$\nu_r$}}
  \psfrag{3}{\tiny{$\nu^{-1}_r$}}
\psfrag{X}{\tiny{$\mathbf{b}^{\infty}$}}
 \psfrag{Y}{\tiny{$\mathbf{b}^{0}$}}
  \psfrag{Z}{\tiny{$\mathbf{b}^{1}$}}
  \psfrag{P}{\tiny{$\mathbf{a}^{\infty}$}}
 \psfrag{Q}{\tiny{$\mathbf{a}^{0}$}}
  \psfrag{R}{\tiny{$\mathbf{a}^{1}$}}
  \psfrag{x}{\tiny{$\bar{\mathbf{b}}^{\infty}$}}
 \psfrag{y}{\tiny{$\bar{\mathbf{b}}^{0}$}}
  \psfrag{z}{\tiny{$\bar{\mathbf{b}}^{1}$}}
  \psfrag{p}{\tiny{$\bar{\mathbf{a}}^{\infty}$}}
 \psfrag{q}{\tiny{$\bar{\mathbf{a}}^{0}$}}
  \psfrag{r}{\tiny{$\bar{\mathbf{a}}^{1}$}}
   \psfrag{s}{\tiny{$s$}}
  \psfrag{t}{\tiny{$t$}}
    \psfrag{i}{\tiny{$h_0$}}
  \psfrag{j}{\tiny{$h_1$}}
    \psfrag{+}{\tiny{$+r$}}
    \psfrag{-}{\tiny{$-r$}}
     \psfrag{A}{\tiny{$(-1)^{h_0}r+\gamma'$}}
     \psfrag{B}{\tiny{$(-1)^{h_0}r+\beta$}}
     \psfrag{C}{\tiny{$r+\alpha$}}
     \psfrag{D}{\tiny{$r+\gamma$}}
     \psfrag{E}{\tiny{$-r+\beta'$}}
     \psfrag{F}{\tiny{$-r+\alpha'$}}
     \centerline{\epsfig{file=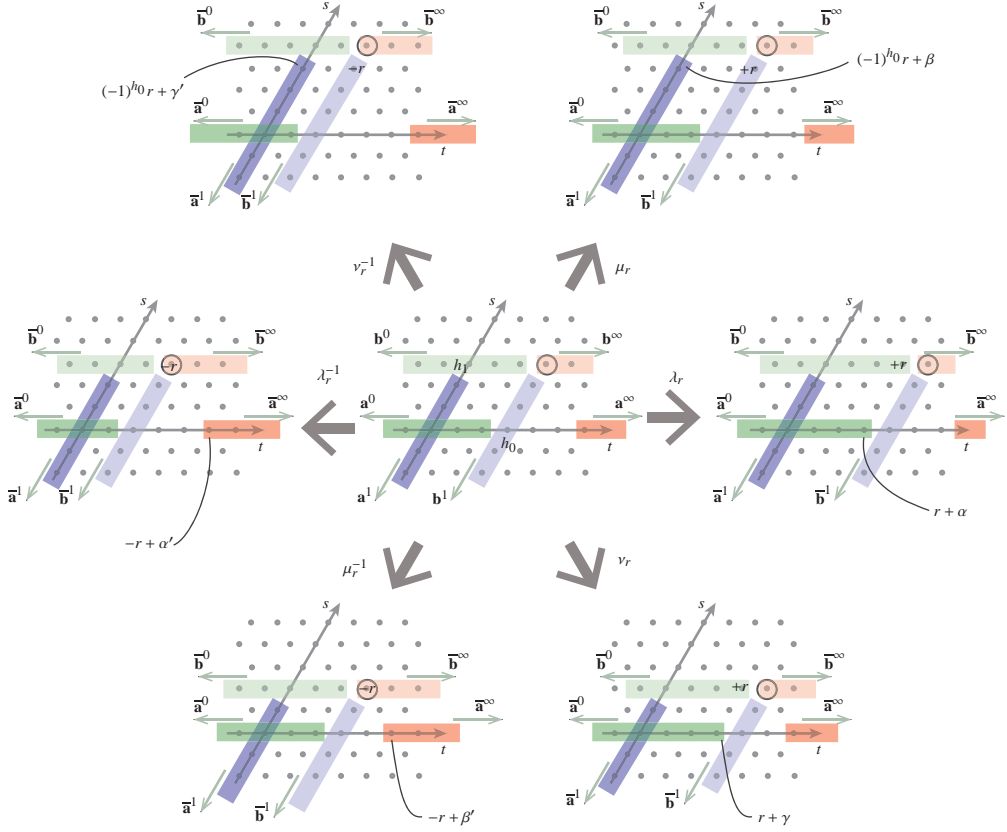}} \caption{Obtaining $\Phi(g\lambda_r)$, $\Phi(g\lambda^{-1}_r)$, $\Phi(g\mu_r)$, $\Phi(g\mu^{-1}_r)$, $\Phi(g\nu_r)$, and $\Phi(g\nu^{-1}_r)$  from $\Phi(g) =((\mathbf{a}^{\infty},h_{\infty}), (\mathbf{a}^0, h_0), (\mathbf{a}^1,h_1))$.  The sequences associated to the former are denoted here  by $\bar{\mathbf{a}}^{\infty}$, $\bar{\mathbf{a}}^{0}$, $\bar{\mathbf{a}}^{1}$, $\bar{\mathbf{b}}^{\infty}$, $\bar{\mathbf{b}}^{0}$, and $\bar{\mathbf{b}}^{1}$.  The central grid represents $g$ and the six outer grids represent  $g\lambda_r$,  $g\lambda^{-1}_r$,  $g\mu_r$,  $g\mu^{-1}_r$, $g\nu_r$, and  $g\nu^{-1}_r$, as indicated.} \label{multiply figure}
\end{figure}

As we will see, much of the explanation for these equations is contained in Figure~\ref{multiply figure}.  The central grid represents $g$: the lamplighter is at $(h_0, h_1)$ and the  
 sequences  ${\mathbf{a}}^{\infty}$, ${\mathbf{a}}^{0}$, ${\mathbf{a}}^{1}$, ${\mathbf{b}}^{\infty}$, ${\mathbf{b}}^{0}$, and ${\mathbf{b}}^{1}$ associated to $f$ are obtained from the locations indicated (in the manner set out earlier).  On right-multiplying $g$ by $\lambda_r$, $\lambda^{-1}_r$, $\mu_r$, $\mu^{-1}_r$, $\nu_r$, or $\nu^{-1}_r$, the lamplighter moves as shown and $r$ is added  to or subtracted from one entry in the configuration (also as shown).  The locations from which the   sequences  $\bar{\mathbf{a}}^{\infty}$, $\bar{\mathbf{a}}^{0}$, $\bar{\mathbf{a}}^{1}$, $\bar{\mathbf{b}}^{\infty}$, $\bar{\mathbf{b}}^{0}$, and $\bar{\mathbf{b}}^{1}$ associated to the new configurations  are obtained also shift as shown. 
 
Here is the justification for the first coordinates on the righthand sides of the six equations above.   

Here  is why the first coordinate of $\Phi(g\lambda_r)$ is   $\left(  \rule{0pt}{0pt} \left(a_2^{\infty}, a_3^{\infty}, \ldots \right), h_{\infty}-1 \right)$.   Since $$g \lambda_r \ = \ (f+ r \cdot (h_0,h_1), \  (h_0,h_1) + \mathbf{e}_0) \ = \  (f+ r x^{h_0}  (1+x)^{h_1}, (h_0+1,h_1)),$$  
the representation of $g\lambda_r$ in the lamplighter model  is obtained from that of $g$ by adding $r$ to  the entry in $\mathcal{K}$ at $(h_0,h_1)$  and moving the lamplighter to  $(h_0+1,h_1)$.    The second entry is $h_{\infty}-1$ because $(h_{\infty}-1) +  (h_{0}+1) + h_1 =0$, and $\bar{\mathbf{a}}^{\infty}$ is $\left(a_2^{\infty}, a_3^{\infty}, \ldots \right)$  by Lemma~\ref{projection properties}(\emph{ii}), since the sequence obtained by propagating $H^{\infty}_{h_0+h_1+1}$ to level $0$ is the same as that obtained by propagating $H^{\infty}_{h_0+h_1}$ to level $0$ and discarding the first entry.  
 
The first coordinate of  $\Phi(g\mu_r)$ can be identified likewise.  

Similarly,
since 
\begin{align*}
g \lambda_r^{-1} \ & = (f,(h_0,h_1))(-r\cdot (-\mathbf{e}_0),-\mathbf{e}_0)\\ & =\ (f- r \cdot (h_0,h_1)\cdot(- \mathbf{e}_0), \  (h_0,h_1) - \mathbf{e}_0) \\
& = \  (f- r x^{h_0-1}  (1+x)^{h_1}, (h_0-1,h_1)),
\end{align*}  

 the representation of $g{\lambda_r}^{-1}$ is obtained by moving the lamplighter left to  $(h_0-1,h_1)$ and subtracting $r$ from the entry there.  We claim that $\Phi(g{\lambda_r}^{-1})$ has first coordinate $$\left(  \rule{0pt}{0pt} \left(-r+ \alpha', a_1^{\infty}, a_2^{\infty}, \ldots \right), h_{\infty}+1 \right)$$ where $\alpha'$ depends only on $g$.  The second entry is $ h_{\infty}+1$ because $(h_{\infty}+1) + (h_0-1) +h_1 =0$.  
 All but the first entry of the sequence $\bar{\mathbf{a}}^{\infty}$ can again be identified by using Lemma~\ref{projection properties}(\emph{ii}). In propagation in  $H^{\infty}_{h_0+h_1-1}$, entries on the boundary line (that through $(h_0+h_1-1,0)$ and $(0,h_0+h_1-1)$) advance only along that line:  they are unchanged as they propagate and  they do not affect any other entries in the resulting sequence.  So the $r$ subtracted from the entry at  $(h_0-1,h_1)$ moves, undisturbed to $(h_0+h_1-1,0)$.    The $\alpha'$ is the first entry in the sequence  when the portion of $\mathcal{K}$ in $H^{\infty}_{h_0+h_1-1}$ is propagated to level $0$.  So it depends only on $g$.   

The first coordinate of  $\Phi(g{\mu_r}^{-1})$ can be identified likewise.  

Since $\nu_r = \lambda_r {\mu_r}^{-1}$,  
the representation of $g{\nu_r}$ is  obtained by  adding $r$ to the entry in $\mathcal{K}$ at $(h_0,h_1)$, moving  the lamplighter to  $(h_0+1,h_1)$, then moving  the lamplighter to  $(h_0+1,h_1-1)$, and then subtracting $r$ from the entry at $(h_0+1,h_1-1)$.   Equivalently, it is obtained by moving  the lamplighter to  $(h_0+1,h_1-1)$ and  adding $r$ to the entry   at  $(h_0,h_1-1)$.   So  the first coordinate of $\Phi(g{\nu_r})$ is  $\left( \mathbf{a}^{\infty}, h_{\infty} \right)$:  
the second entry is $h_{\infty}$ because $h_{\infty} + (h_0+1) + (h_0-1)=0$ and $\bar{\mathbf{a}}^{\infty}= {\mathbf{a}}^{\infty}$ because $\bar{\mathbf{a}}^{\infty}$ and ${\mathbf{a}}^{\infty}$ are both obtained by propagating in $H^{\infty}_{h_0+h_1}$, and the altered entry in the configuration is outside $H^{\infty}_{h_0+h_1}$.  

The first coordinate of  $\Phi(g{\nu_r}^{-1})$ is $\left( \mathbf{a}^{\infty}, h_{\infty} \right)$ likewise.

The entries in the second and third coordinates  are explained analogously except for $\Phi(g{\mu_r})$ and $\Phi(g{\nu^{-1}_r})$, where there is an added complication.   When, in the case of $\Phi(g{\mu_r})$,  the $r$ added at $(h_0, h_1)$ is propagated to  $(0, h_1 )$ it changes sign with each step and so becomes  $(-1)^{h_0}r$.  Similarly,  for $\Phi(g{\nu_r}^{-1})$, the $r$ subtracted from $(h_0-1, h_1)$  changes sign with each step as it  propagates to $(0, h_1)$, and so also   becomes  $(-1)^{h_0-1}(-r) = (-1)^{h_0}r$.

Finally, we explain why $\Phi$ extends to an isomorphism from the Cayley graph $\mathcal{C}$ to the 1-skeleton of $\mathcal{H}_2(R)$.

Suppose $g \in \Gamma_2(R)$.   The set of vertices $\mathcal{V}$ in $\mathcal{H}_2(R)$  that are reached by traveling from   $\Phi(g)$ along  a single edge partitions into six subsets:  travel along the unique downwards edge  in one  coordinate-tree,  travel upwards  along one of an $R$-indexed family of edges in another, and    remain  stationary in the last.   Since $\alpha$, $\alpha'$, $\beta$, $\beta'$,  $\gamma$, and $\gamma'$ only depend on $g$,   each of $g\lambda_r \mapsto  \Phi(g\lambda_r)$,    $g{\lambda_r}^{-1} \mapsto  \Phi(g{\lambda_r}^{-1})$, $g\mu_r \mapsto  \Phi(g\mu_r)$,    $g{\mu_r}^{-1} \mapsto  \Phi(g{\mu_r}^{-1})$, $g\nu_r \mapsto  \Phi(g\nu_r)$,  and  $g{\nu_r}^{-1} \mapsto  \Phi(g{\nu_r}^{-1})$   is a  map onto one such subset, and together they give a bijection from the neighbours of $g$ in $\mathcal{C}$ to $\mathcal{V}$.
 
There are no double-edges and no edge-loops in either graph:  for the 1-skeleton of $\mathcal{H}_2(R)$ this is straightforward from the definition, and it therefore follows from the above for the Cayley graph.   So $\Phi$ extends to an isomorphism between the two graphs, and this completes our proof.  
 
\begin{remark}
It may be tempting to try to express directly the group multiplication in $\Gamma_2(R)$ in terms of the representations of elements as triples of addresses of vertices in $\mathcal{T}_R$.   It is striking how spectacularly awkward this turns out to be, as the following special case of multiplication by a generator $\zeta \in \set{\lambda_r^{\pm 1}, \mu_r^{\pm 1}, \nu_r^{\pm 1}}$ illustrates.

We have $\Phi(g) = ((\mathbf{a}^{\infty},h_{\infty}), (\mathbf{a}^0, h_0), (\mathbf{a}^1,h_1))$.  To find 
$\Phi(g \zeta)$ we call on the sequences $\mathbf{b}^{\infty}$, $\mathbf{b}^{0}$ and $\mathbf{b}^{1}$.
Since the propagation (of the bijection established in Lemma~\ref{projection properties}(\emph{iii})) in a half-plane proceeds in the manner of Pascal's triangle, we can explicitly
express  $\mathbf{a}^{\ast}$  in terms of   $\mathbf{b}^{\ast}$ and $\mathbf{b}^{\ast}$  in terms of   $\mathbf{a}^{\ast}$:
\begin{align*}
a^{\ast}_p \ = \  &
\begin{cases}
\displaystyle{ \ \sum_{i=0}^m} (-1)^{\delta} b^{\ast}_{p+i} \SB{m}{i}  & \textup{if} \ m \geq 0 \\
\displaystyle{ \ \sum_{i=0}^{\infty}} (-1)^{\epsilon} b^{\ast}_{p+i} \SB{i-1-m}{i}  & \textup{if} \ m < 0,  
 \end{cases}  \\
b^{\ast}_p \ = \  &
\begin{cases}
\displaystyle{ \ \sum_{i=0}^{-m}} (-1)^{\delta} a^{\ast}_{p+i} \SB{-m}{i}  &  \textup{if} \ m \leq 0 \\
\displaystyle{ \ \sum_{i=0}^{\infty}} (-1)^{\epsilon} a^{\ast}_{p+i} \SB{i-1+m}{i}  & \textup{if} \ m > 0 ,
 \end{cases}  
\end{align*}

when $\ast=\infty,0$ and $m=h_1$, $\epsilon=i$, and $\delta=0$, and when $\ast=1$, and $m=h_0$, $\epsilon= |h_0|$, and $\delta=i+|h_0|$. The infinite sums make sense since all but finitely many entries of the sequences $\mathbf{a}^{\ast}$ and $\mathbf{b}^{\ast}$  are zero.

These formulae could be used to express  $\alpha$, $\alpha'$, $\beta$, $\beta'$,  $\gamma$, and  $\gamma'$ in terms of $\mathbf{a}^{\infty}$, $\mathbf{a}^{0}$, $\mathbf{a}^{1}$, $h_0$ and $h_1$:   obtain $\mathbf{b}^{\infty}$, $\mathbf{b}^{0}$, and $\mathbf{b}^{1}$ using the second formula, then shift them and add or subtract $r$ appropriately to get the $\bar{\mathbf{b}}^{\infty}$, $\bar{\mathbf{b}}^{0}$ and $\bar{\mathbf{b}}^{1}$ associated to $\Phi(g \zeta)$, and finally obtain  $\alpha$, $\alpha'$, $\beta$, $\beta'$,  $\gamma$, and  $\gamma'$ using the first formula.

For example, to calculate $\alpha'$ 
first obtain $\mathbf{b}^{\infty}$ and $b_1^{0}$ from  $\mathbf{a}^{\infty}$ and $\mathbf{a}^{0}$  using the second formula with $m=h_1$, 
then let $\bar{\mathbf{b}}^{\infty}=(b^0_1 -r, b^{\infty}_1, b^{\infty}_2, b^{\infty}_3, \ldots)$, 
then obtain $\bar{\mathbf{a}}^{\infty}$ from $\bar{\mathbf{b}}^{\infty}$ using the first formula with $m=h_1$, and then, as $-r + \alpha'=\bar{a}^{\infty}_1$, we have found $\alpha'$.

  The  complexity of the formulae that would result stands in marked contrast to the ``$f_k+r$'' in our proof in Section~\ref{rank 1} of Theorem~\ref{main} in the case where $n=1$.
\end{remark}

\begin{remark} \label{rank-2 polynomial remark}
Given that equivalence classes of configurations correspond to elements of $R[x,x^{-1}, (1+x)^{-1}]$, the above analysis can all be rephrased in terms of polynomials---the point-of-view we will take in the next section. 
In the light of Lemma~\ref{polynomials versus configurations}, Corollary~\ref{basis} amounts to the statement that for each pair $(k,l)\in \z^2$,
 $$\set{ \, \left. x^{k+i}(1+x)^l  \,   \right| \,  i \in \Z \, } \, \cup \, \set{ \, \left. x^k(1+x)^{j+l}   \, \right| \,  j=-1,-2, \cdots \, }$$ is a basis for  $R \left[ x, x^{-1}, (1+x)^{-1}\right]$ over $R$. 

 The sequence  $\mathbf{a}^{\infty}$ lists the coefficients of $x^{0}, x^{1}, \ldots$ in $x^{h_{\infty}}f$, 
when expressed as a linear combination of the basis   $$\set{ \, \left. x^{i}  \,   \right| \,  i \in \Z \, } \, \cup \, \set{ \, \left.  (1+x)^{j}   \, \right| \,  j=-1,-2, \cdots \, }.$$ Likewise,    $\mathbf{a}^{0}$   lists the coefficients of   $x^{-1}, x^{-2}, \ldots$ in $x^{-{h_0}}f$, and $\mathbf{a}^{1}$  lists those of $(1+x)^{-1}, (1+x)^{-2}, \ldots$ in $(1+x)^{-h_1}f$.

If we multiply $f$ by $x^{-h_0} (1+x)^{-h_1}$ to give   $\hat{f}$  (in effect,  shifting the origin from $(0,0)$ to $(h_0,h_1)$), then  $\mathbf{b}^{\infty}$ lists the coefficients of $x^{0}, x^{1}, \ldots$ in $\hat{f}$, and $\mathbf{b}^{0}$ lists the coefficients of $x^{-1}, x^{-2}, \ldots$,  and    $\mathbf{b}^{1}$ lists those of $(1+x)^{-1}, (1+x)^{-2}, \ldots$.
     \end{remark}

\section{The general case of Theorem~\ref{main}} \label{rank n}

The standing assumptions in this section are that $n$  is any fixed positive integer and $R$ is any commutative ring with unity in which $2$, $3$, \ldots, $n-1$ are invertible.     We will prove Theorem~\ref{main} in full generality: the 1-skeleton of $\mathcal{H}_n(R)$ is the specified Cayley graph.
 
 \subsection{Preliminaries}
 
Recall that  
 $$A_n(R) \ = \ R \left[ x, x^{-1}, (1+x)^{-1}, \ldots, (n -1+x)^{-1} \right].$$
  
 The following lemma generalizes Corollary~\ref{basis} and is vital to the proof of Theorem~\ref{main}.   Baumslag \& Stammbach  \cite{BaS} prove a very similar result as do Bartholdi, Neuhauser \& Woess \cite[Section~3]{BNW}. 
 We include a proof for completeness and because this and the    lemmas that follow are where the hypothesis that   $2, 3, \ldots, n-1$ are invertible is used.
 
 \begin{lemma}[\emph{adapted from} Baumslag \& Stammbach, Lemma~2.1, \cite{BaS}]  \label{basis lemma}
 $$\set{ \left. \,  1, x^{j},  x^{-j}, (1+x)^{-j}, \ldots, (n-1+x)^{-j} \, \right| \,  j   = 1, 2, \ldots   \, }$$ is a basis for $A_n(R)$ over $R$.  
\end{lemma}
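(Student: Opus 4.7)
I would establish this in two parts: spanning and linear independence.

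\emph{Spanning.} Every element of $A_n(R)$ is an $R$-linear combination of monomials $\prod_{i=0}^{n-1}(i+x)^{a_i}$ with $a_i\in\Z$. I would reduce each such monomial by iterating the partial-fraction identity
\[
\frac{1}{(i+x)(k+x)} \ = \ \frac{1}{k-i}\left(\frac{1}{i+x}-\frac{1}{k+x}\right),
\]
whose legitimacy relies precisely on $k-i\in\{\pm 1,\ldots,\pm(n-1)\}$ being invertible in $R$. A short double induction---first on the number of distinct factors occurring to negative powers, then on the total negative degree---reduces the monomial to a polynomial in $R[x]$ plus an $R$-linear combination of terms $(i+x)^{-j}$ with $j\ge 1$. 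For the single-denominator case $N/(i+x)^b$, polynomial division (which works because $(i+x)^b$ is monic), together with the re-expansion of any polynomial of degree $<b$ in the basis $\{1,(i+x),\ldots,(i+x)^{b-1}\}$, completes the reduction. The leftover polynomial part lies in the $R$-span of $\{1,x,x^2,\ldots\}$, which consists of basis elements.

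\emph{Linear independence.} For each $i_0\in\{0,1,\ldots,n-1\}$ I would introduce the Laurent expansion homomorphism
\[
\phi_{i_0}\colon A_n(R)\longrightarrow R(\!(y)\!),\qquad x\mapsto y-i_0.
\]
This is a well-defined ring map because every factor $(i+x)$ goes to a unit of $R(\!(y)\!)$: the factor $(i_0+x)$ becomes $y$, and for $i\ne i_0$ the leading constant $i-i_0\in\{\pm 1,\ldots,\pm(n-1)\}$ is invertible in $R$, so $(i-i_0+y)^{-1}=(i-i_0)^{-1}\sum_{k\ge 0}\bigl(-y/(i-i_0)\bigr)^k$ lies in $R[\![y]\!]$. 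The map is injective because the localizing set of $A_n(R)$ consists of monic polynomials (hence non-zero-divisors in $R[x]$), and the substitution $x\mapsto y-i_0$ gives an $R$-algebra isomorphism $R[x]\cong R[y]$ followed by the inclusion $R[y]\hookrightarrow R(\!(y)\!)$.

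Now suppose a finite relation
\[
c_0+\sum_{j\ge 1}c_j x^j+\sum_{i=0}^{n-1}\sum_{j\ge 1}d_{i,j}(i+x)^{-j}\ =\ 0
\]
holds in $A_n(R)$. Applying $\phi_{i_0}$, the polynomial terms $c_j(y-i_0)^j$ are polynomials in $y$, each $d_{i,j}(i-i_0+y)^{-j}$ with $i\ne i_0$ is a power series in $y$, and $d_{i_0,j}(i_0+x)^{-j}$ maps to $d_{i_0,j}\,y^{-j}$. Thus the principal (negative-power) part of the image in $R(\!(y)\!)$ is $\sum_{j\ge 1}d_{i_0,j}\,y^{-j}$, and the uniqueness of Laurent coefficients forces $d_{i_0,j}=0$ for every $j\ge 1$. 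Running $i_0$ through $\{0,\ldots,n-1\}$ kills every $d_{i,j}$, after which the remaining identity $c_0+\sum_{j\ge 1}c_j x^j=0$ lies in $R[x]\subset A_n(R)$, so all $c_j=0$ by the standard linear independence of $\{x^j\}_{j\ge 0}$ in $R[x]$.

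\emph{Main obstacle.} The principal-part extraction in the independence step is very clean once the expansion maps are in place, so I expect the main technical work to be the spanning step, where one has to verify the partial-fraction reduction runs cleanly over a general commutative ring; this is precisely where the invertibility of $2,\ldots,n-1$ in $R$ is used, and that hypothesis is also exactly what is needed to set up the Laurent expansions $\phi_{i_0}$.
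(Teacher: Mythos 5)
Your proposal is correct. The spanning half is essentially the paper's argument: the same partial-fraction identity $(l+x)^{-1}(m+x)^{-1} = (m-l)^{-1}(l+x)^{-1} + (l-m)^{-1}(m+x)^{-1}$ drives an induction reducing arbitrary monomials to the claimed basis, followed by division by the monic $(m+x)^k$ to handle polynomial numerators; only the bookkeeping of the induction differs. The independence half is where you genuinely diverge. The paper stays inside $R[x]$: it multiplies the putative relation through by $x^{d_0}(1+x)^{d_1}\cdots(n-1+x)^{d_{n-1}}$, compares top coefficients to kill the $\mu_j$, then repeatedly reads off constant terms and divides by $x$ (and then changes variable to work at the other poles) to kill the $\lambda_{i,j}$ one at a time. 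You instead build, for each pole $i_0$, a ring homomorphism $\phi_{i_0}\colon A_n(R)\to R(\!(y)\!)$ via $x\mapsto y-i_0$, and read the $d_{i_0,j}$ off as the principal-part coefficients. Your check that $\phi_{i_0}$ is well defined is exactly right --- it is the universal property of localization plus invertibility of $i-i_0$ --- and the uniqueness of coefficients in $R(\!(y)\!)$ (valid over any commutative $R$) finishes it. Morally the two arguments are the same coefficient extraction; yours packages it more systematically and avoids the iterated divide-by-$x$ bookkeeping, at the cost of importing formal Laurent series --- the very machinery (used by Bartholdi, Neuhauser and Woess) that the authors state they are deliberately avoiding in favour of a proof intrinsic to the polynomial rings. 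As a standalone proof of the lemma, though, it is complete and correct.
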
 

\begin{proof}
First we show that the given set spans.  

Suppose $S \subseteq \set{0, 1, \ldots, n-1}$. 

For $l \in S$, let $$\lambda_l  \  :=  \ \prod_{i \in S \ssm \set{l}} (i-l)^{-1},$$ understanding this product to be $1$ when $S \ssm \set{l} = \emptyset$.   This is well defined because $2, 3, \ldots, n-1$ are invertible.   Then, by induction on $n$, 
 $$\prod_{l \in S} (l+x)^{-1}  \ = \ \sum_{l \in S}  \lambda_{l} (l+x)^{-1}$$ 
  in $A_n(R)$, the  crucial calculation for the induction step being that 
	$$(l+x)^{-1} (m +x)^{-1} \ = \ (m-l)^{-1} (l+x)^{-1} + (l- m)^{-1} (m+x)^{-1}$$ for all  $m\in\{1, 2, \ldots, n-1\}$ and $l \in  \set{0, 1, \ldots, m-1}$.   
 So  $\prod_{l \in S} (l+x)^{-1}$ is in the span. 

Next consider $x^{h_0}(1+x)^{h_1} \cdots (n-1+x)^{h_{n-1}}$ where each $h_i$ is a non-positive integer.  We show it too is in the span by inducting on $\sum_{i=0}^{n-1} \abs{h_i}$.  The base case is immediate and the previous paragraph gives the induction step:   let $S = \set{ i \mid h_i <0 }$ and  $$\varepsilon_i  \ = \ \begin{cases}  1 &  \text{ if }   i \in S  \\  0 &  \text{ if }  i \notin S. \end{cases}$$ for each $i$, then  
\begin{align*}
x^{h_0}(1+x)^{h_1} & \cdots (n-1+x)^{h_{n-1}} \\ 
 & =  \ \left(\rule{0pt}{10pt}x^{h_0+ \varepsilon_0}(1+x)^{h_1 + \varepsilon_1} \cdots (n-1+x)^{h_{n-1}+ \varepsilon_{n-1}} \right) \prod_{l \in S} (l+x)^{-1} \\ 
 & =  \ \left(\rule{0pt}{10pt}x^{h_0 + \varepsilon_0}(1+x)^{h_1 + \varepsilon_1} \cdots (n-1+x)^{h_{n-1} + \varepsilon_{n-1}} \right) \sum_{l\in S} \lambda_{l} (l+x)^{-1}.  
\end{align*}

To complete the proof that the given set spans it is enough to show that $p(x)(m+x)^{-k}$ is in the span whenever $p(x) \in R[x]$,   $m\in \set{0, \ldots, n-1}$, and $k > 0$.  After all, any element of $A_n(R)$ is an $R$-linear combination of products of powers of $x, (1+x), \ldots, (n-1+x)$ and so by the previous result is an $R$-linear combination of some such $p(x)(m+x)^{-k}$.  Well, write $p(x) = (m+x) q(x) +s$ for some $q(x) \in R[x]$ and $s \in R$.  Then  
\begin{align*}
p(x)(m+x)^{-k} &  
  \ = \   \left(\rule{0pt}{10pt} q(x) +s(m+x)^{-1} \right)(m+x)^{-k +1}
\end{align*} 
which by induction on $\abs{k}$ is in the span.  

For linear independence, suppose $$0 \ = \ \sum_{j =0}^{d_{\infty}} \mu_j x^j + \sum_{i=0}^{n-1} \sum_{j=1}^{d_i} \lambda_{i,j} (i+x)^{-j}$$ in $A_n(R)$  for some $\mu_j, \lambda_{i, j} \in R$. Multiplying through by $x^{d_0}(1+x)^{d_1}\cdots (n-1+x)^{d_{n-1}}$   and comparing coefficients we see that $0=\mu_0=\mu_1=\cdots = \mu_{d_\infty}$. The constant term on the right hand side is $\lambda_{0, d_0}\cdot 1^{d_1}\cdot 2^{d_2}\cdot\ldots\cdot (n-1)^{d_{n-1}}$. As $2, \ldots, (n-1)$ are invertible in $R$, we must have  $\lambda_{0, d_0}=0$.
Repeatedly  dividing through by $x$ and analyzing the constant term gives $\lambda_{0, j}=0$ for all $j$. Viewing the resulting polynomial as a polynomial in $x-1$ rather than $x$ and applying the same technique yields $\lambda_{1, j}=0$ for all $j$. Then viewing it as a polynomial in $x-2$, then $x-3$, and so on, gives $\lambda_{i, j}=0$ for all $i,j$.
\end{proof}

In the light of this lemma we will, in the remainder of this section and the next talk about \emph{the $(\ast +x)^{-j}$ or the   $x^{j}$ coefficient of a $p \in A_n(R)$}, meaning the coefficient of that term when $p$ is expressed as a linear combination of    the basis established in Lemma~\ref{basis lemma}. 

   \begin{lemma} \label{First bijection lemma}  
 Suppose $\ast \in \set{0, 1, \ldots, n-1}$ and $q_0, \ldots, q_{n-1} \in \Z$, and $q_{\ast} =0$.  
  Given $\lambda_{\ast,1}, \lambda_{\ast,2}, \ldots$ in $R$, all but finitely many of which are zero, take $p$ to be any element of $A_n(R)$ such that the 
 coefficients of $(\ast + x)^{-1}, (\ast + x)^{-2}, \ldots$ are   $\lambda_{\ast,1}, \lambda_{\ast,2}, \ldots$.   Let  $\lambda'_{\ast,1}, \lambda'_{\ast,2}, \ldots$ be the coefficients of $(\ast + x)^{-1}, (\ast + x)^{-2}, \ldots$  in  $$p'  \  := \  x^{q_0} (1+x)^{q_1} \cdots (n-1+x)^{q_{n-1}} p.$$   
 Then  $\lambda'_{\ast,1}, \lambda'_{\ast,2}, \ldots$ depend only on  $\lambda_{\ast,1}, \lambda_{\ast,2}, \ldots$ and    $$(  \lambda_{\ast,1},  \ \lambda_{\ast,2}, \ \ldots  )   \ \mapsto \  (  \lambda'_{\ast,1}, \ \lambda'_{\ast,2}, \  \ldots   )$$ is a   bijection from the set of finitely supported sequences of elements of $R$ to itself.   Moreover, if
 $0= \lambda_{\ast,2} =  \lambda_{\ast,3} =  \cdots$, then
 \begin{align*}
  ( \lambda'_{\ast,1},  \  \lambda'_{\ast,2},   \ \lambda'_{\ast,3}, \  \ldots) = \left(  \lambda_{\ast,1} \prod_{i \in \set{0, \ldots, n-1 } \ssm \set{\ast} } (i-\ast)^{q_i}, \  0, \  0,  \ \ldots \right).
  \end{align*}
 
\end{lemma}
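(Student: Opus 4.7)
\emph{Proof plan.} Let $M := x^{q_0}(1+x)^{q_1}\cdots(n-1+x)^{q_{n-1}}$; since $q_\ast=0$, $M$ is a product of factors $(i+x)^{\pm 1}$ with $i \in \{0,\ldots,n-1\}\ssm\{\ast\}$. Let $B_\ast$ denote the $R$-submodule of $A_n(R)$ spanned by the basis elements of Lemma~\ref{basis lemma} other than $(\ast+x)^{-1},(\ast+x)^{-2},\ldots$. By the uniqueness part of that lemma, an element of $A_n(R)$ lies in $B_\ast$ if and only if all of its $(\ast+x)^{-j}$ coefficients vanish. Accordingly, I would decompose $p=p_\ast+r$ with $p_\ast:=\sum_{j\geq 1}\lambda_{\ast,j}(\ast+x)^{-j}$ and $r\in B_\ast$, and recast the entire lemma as a statement about how $M$ acts on $B_\ast$ and on the $R$-span of $\{(\ast+x)^{-j}\}_{j\geq 1}$.

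The central technical claim, and the main obstacle, is that $B_\ast$ is closed under multiplication by $(i+x)^{\pm 1}$ for every $i\neq\ast$ in $\{0,\ldots,n-1\}$. This is where the hypothesis that $2,\ldots,n-1$ are invertible enters: it ensures that $l-i$ is a unit whenever $l,i\in\{0,\ldots,n-1\}$ are distinct. I would verify closure on each basis element of $B_\ast$. For polynomials this is trivial; for $(l+x)^{-k}$ with $l\neq\ast$ and $k\geq 1$ it reduces to the identities
\begin{align*}
(i+x)(l+x)^{-k} &= (i-l)(l+x)^{-k} + (l+x)^{-(k-1)}, \\
(i+x)^{-1}(l+x)^{-1} &= (l-i)^{-1}\bigl[(i+x)^{-1}-(l+x)^{-1}\bigr]\ \text{ when } i\neq l,
\end{align*}
iterated to handle higher powers of $(l+x)^{-1}$. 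Since $M$ is a product of such factors, it follows that $Mr\in B_\ast$ for every $r\in B_\ast$.

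Granted this closure, $Mp=Mp_\ast+Mr$ has the same $(\ast+x)^{-j}$ coefficients as $Mp_\ast$, so they depend only on $\lambda_{\ast,1},\lambda_{\ast,2},\ldots$ as required. Denote the resulting map by $\Phi_M\colon(\lambda_{\ast,j})\mapsto(\lambda'_{\ast,j})$. The same analysis applies to $M^{-1}=x^{-q_0}(1+x)^{-q_1}\cdots(n-1+x)^{-q_{n-1}}$, whose $\ast$-exponent is also zero, yielding a map $\Phi_{M^{-1}}$. To see $\Phi_{M^{-1}}\circ\Phi_M=\textup{id}$, set $q_\ast:=\sum \Phi_M(\lambda)_j(\ast+x)^{-j}$; then $Mp_\ast-q_\ast\in B_\ast$ by construction, so $p_\ast-M^{-1}q_\ast=M^{-1}(Mp_\ast-q_\ast)\in B_\ast$ by closure, meaning the $(\ast+x)^{-j}$ coefficients of $M^{-1}q_\ast$ coincide with those of $p_\ast$, namely $\lambda$. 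The other composite is handled symmetrically, giving bijectivity.

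For the ``moreover'' assertion, take $p_\ast=\lambda_{\ast,1}(\ast+x)^{-1}$ and set $g(x):=\prod_{i\neq\ast}(i+x)^{q_i}$, so $Mp_\ast=\lambda_{\ast,1}(\ast+x)^{-1}g(x)$. I would prove by induction on the number of $(i+x)^{\pm 1}$-factors of $g$ that
\[
(\ast+x)^{-1}g(x)\ =\ g(-\ast)\,(\ast+x)^{-1}+h(x)\quad\text{for some } h\in B_\ast.
\]
The base case $g=1$ is immediate; the inductive step uses the identities $(i+x)(\ast+x)^{-1}=(i-\ast)(\ast+x)^{-1}+1$ and $(i+x)^{-1}(\ast+x)^{-1}=(i-\ast)^{-1}(\ast+x)^{-1}+(\ast-i)^{-1}(i+x)^{-1}$ (for $i\neq\ast$), together with the closure of $B_\ast$ established above. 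Reading off coefficients yields $\lambda'_{\ast,1}=\lambda_{\ast,1}g(-\ast)=\lambda_{\ast,1}\prod_{i\neq\ast}(i-\ast)^{q_i}$ and $\lambda'_{\ast,j}=0$ for $j\geq 2$, completing the plan.
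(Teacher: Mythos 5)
Your plan is correct, and it organizes the argument differently from the paper. The paper reduces at once to the single-factor case $p'=(i+x)p$, expands $p$ in the basis of Lemma~\ref{basis lemma}, and computes the effect on \emph{every} coefficient via $(i+x)(l+x)^{-j}=(l+x)^{-j+1}+(i-l)(l+x)^{-j}$; this yields the explicit recurrence $\lambda'_{\ast,j}=\lambda_{\ast,j+1}+(i-\ast)\lambda_{\ast,j}$, from which dependence on $(\lambda_{\ast,j})$ alone is read off, and bijectivity is obtained by explicitly back-solving this triangular system (using finite support and the invertibility of $i-\ast$). You instead isolate the complementary submodule $B_\ast$ and prove it is invariant under multiplication by $(i+x)^{\pm1}$ for $i\neq\ast$; this gives well-definedness of $\Phi_M$ for the whole product $M$ at once, and bijectivity comes for free from the symmetric roles of $M$ and $M^{-1}$ rather than from inverting a recurrence. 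The underlying partial-fraction identities are the same in both arguments, and the hypothesis that $2,\ldots,n-1$ are invertible enters at the same point (units $l-i$ for distinct $l,i\in\set{0,\ldots,n-1}$). What the paper's route buys is the explicit formula for the map (the Pascal's-triangle propagation exploited elsewhere); what yours buys is a cleaner, computation-free proof of bijectivity. Your treatment of the ``moreover'' clause via evaluation of $g$ at $-\ast$ is a nice repackaging of the paper's iterated observation $\lambda'_{\ast,1}=\lambda_{\ast,1}(i-\ast)$. One small point to tighten: ``for polynomials this is trivial'' covers closure of $B_\ast$ under multiplication by $(i+x)$, but closure under $(i+x)^{-1}$ applied to $1$, $x^j$, and (when $\ast\neq 0$) $x^{-j}$ still needs the division-algorithm step $p(x)=(i+x)q(x)+s$ together with $(i+x)^{-1}\in B_\ast$; this is elementary and already appears in the paper's proof of Lemma~\ref{basis lemma}, but it should be stated rather than dismissed.
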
  
  
 \begin{proof}
 It is enough to prove this in the special case $p'=(i+x)p$ where one of $q_0, \ldots, q_{n-1}$, denoted $q_i$, is $1$ and all others are $0$, for a general instance can  be reached by composing a suitable sequences of instances of this special case (and its `inverse').   Note that $i\neq \ast$, and so  we will be able to invert $(i - \ast)$.

 Express 
 \begin{align}
 p \  = \  & \sum_{j =0}^{\infty} \mu_j   x^j + \sum_{l=0}^{n-1} \sum_{j=1}^{\infty} \lambda_{l, j} (l+x)^{-j}, \label{p} \\
 p' \ = \  &  \sum_{j =0}^{\infty} \mu'_j x^j + \sum_{l=0}^{n-1} \sum_{j=1}^{\infty} \lambda'_{l,j} (l+x)^{-j} \label{p'}
 \end{align}
  where each $\mu_j, \mu'_j, \lambda_{l, j}, \lambda'_{l, j} \in R$ (and only finitely many are non-zero)---that is,  as  linear combinations of the basis established in Lemma~\ref{basis lemma}.  We prove the special case by calculating $(\mu'_0, \mu'_1, \ldots )$  and $(\lambda'_{l,1}, \lambda'_{l,2}, \ldots )$.  
  
 For $i, l \in \set{0, \ldots, n-1}$, 
$$(i +x)  \sum_{j =0}^{\infty} \mu_j x^j  \ =  \    i \mu_0   + (\mu_0+i \mu_1) x^1 + (\mu_1+i \mu_2) x^2 + \cdots$$ 
and, as $(i+x)(l+x)^{-j}  \ = \    (l+x)^{-j+1} + (i - l)(l+x)^{-j}$,
\begin{align}
(i +x)  \sum_{j=1}^{\infty} \lambda_{l,j} (l+x)^{-j}   \ = \ \  &  \sum_{j=1}^{\infty} \lambda_{l,j} (l+x)^{-j+1} +  \sum_{j=1}^{\infty} \lambda_{l,j}  (i - l) (l+x)^{-j} \nonumber \\
 \ = \ \  &  \lambda_{l,1} +     \sum_{j=1}^{\infty} \left(   \lambda_{l,j+1}    +    \lambda_{l,j}(i - l)   \right) (l+x)^{-j}.  \label{use}
\end{align}
So 
 $$(\lambda'_{\ast,1}, \lambda'_{\ast,2}, \ldots)  \  =  \ \left( \lambda_{\ast,2}    +   \lambda_{\ast,1} (i - \ast),  \ \lambda_{\ast,3}    +    \lambda_{\ast,2} (i - \ast), \  \ldots \right),$$
and evidently the only coefficients from \eqref{p}  this depends on are $\lambda_{\ast,1}, \lambda_{\ast,2}, \ldots$.  
Also we find that if  
 $0= \lambda_{\ast,2} =  \lambda_{\ast,3} =  \cdots$, then
 \begin{align*}
  ( \lambda'_{\ast,1}, \ \lambda'_{\ast,2}, \ \lambda'_{\ast,3}, \ \ldots) = \left(  \lambda_{\ast,1}   (i-\ast), \ 0, \  0, \ \ldots \right),
  \end{align*}
which leads to the final claim.
  To see that 
$$(  \lambda_{\ast,1}, \ \lambda_{\ast,2}, \  \ldots  )   \ \mapsto \  (  \lambda'_{\ast,1}, \  \lambda'_{\ast,2}, \ \ldots   )$$
is invertible when $i \neq \ast$, consider any $m$ such that  $ \lambda'_{\ast, q} =0$ for all $q>m$.  Then $0= \lambda_{\ast, m+1} = \lambda_{\ast, m+2}= \cdots$ as otherwise the sequence  $\lambda_{\ast,1}, \lambda_{\ast,2}, \ldots$ would not be finitely supported.  And 
\begin{align*}
\lambda_{\ast,m}  \ = \  \ &  (i-\ast)^{-1} \lambda'_{\ast,m}  \\
\lambda_{\ast,m-1}  \ = \ \ & (i-\ast)^{-1} (\lambda'_{\ast,m-1} - \lambda_{\ast, m}) \\
 \vdots \ \ \ \\
\lambda_{\ast,1}  \ = \  \ & (i-\ast)^{-1}(\lambda'_{\ast,1} - \lambda_{\ast, 2}). 
\end{align*}
\end{proof}

 \begin{lemma} \label{Second bijection lemma}  
 Suppose $q_0, \ldots, q_{n-1} \in \Z$ and $\sum_i q_i=0$.  
  Given $\mu_0, \mu_1, \ldots$ in $R$, all but finitely many of which are zero, take $p$ to be any element of $A_n(R)$ such that  the 
 coefficients of $x^0, x^1, \ldots$ are   $\mu_0, \mu_1, \ldots$.   Let  $\mu'_0, \mu'_1, \ldots$ be the coefficients of $x^0, x^1, \ldots$  in         $$p'  \  := \  x^{q_0} (1+x)^{q_1} \cdots (n-1+x)^{q_{n-1}} p.$$   
 Then $\mu'_0, \mu'_1, \ldots$ depend only on $\mu_0, \mu_1, \ldots$ and  
 $$(\mu_0, \ \mu_1, \ \ldots )   \ \mapsto \  (\mu'_0,  \ \mu'_1, \ \ldots )$$
is a   bijection from the set of finitely supported sequences of elements of $R$ to itself.      Moreover, if $0= \mu_1 =  \mu_2 = \cdots$, then   $(\mu'_0,   \mu'_1, \mu'_2, \ldots) = (\mu_0,  0, 0, \ldots)$.
\end{lemma}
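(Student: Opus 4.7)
The plan is to embed $A_n(R)$ into the ring $R(\!(x^{-1})\!)$ of formal Laurent series at infinity, by expanding each negative-exponent basis element as
$$(l+x)^{-k} \ = \ x^{-k}(1+lx^{-1})^{-k} \ = \ \sum_{m \geq 0} \binom{-k}{m} l^m \, x^{-k-m}.$$
Under this (ring-homomorphic) map, every basis vector of the form $(l+x)^{-k}$ lands in strictly negative Laurent degree, while each $x^j$ for $j \geq 0$ maps to itself. Consequently, for $p \in A_n(R)$ written in the basis of Lemma~\ref{basis lemma}, the coefficients $\mu_0, \mu_1, \ldots$ of $x^0, x^1, \ldots$ in that basis are exactly the nonnegative Laurent coefficients of the image of $p$ at infinity.

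The key observation is that the multiplier
$$m(x) \ := \ x^{q_0}(1+x)^{q_1}\cdots(n-1+x)^{q_{n-1}}$$
has Laurent expansion $m(x) = 1 + c_1 x^{-1} + c_2 x^{-2} + \cdots$ at infinity, because each factor $(l+x)^{q_l}$ expands as $x^{q_l}(1 + O(x^{-1}))$ and $\sum_l q_l = 0$ by hypothesis. The Cauchy product in $R(\!(x^{-1})\!)$ then yields, for each $j \ge 0$,
$$\mu'_j \ = \ \sum_{k \ge 0} c_k \, \mu_{j+k},$$
a finite sum (with $c_0 = 1$). The main step---showing that $\mu'_j$ depends only on the sequence $\mu_0, \mu_1, \ldots$ and not on the $(l+x)^{-k}$-coefficients of $p$---now becomes transparent: the $(l+x)^{-k}$ basis elements all contribute only to strictly negative Laurent degrees, and multiplication by $m(x) = 1 + O(x^{-1})$ cannot lift those contributions into nonnegative degree. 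This is where the hypothesis $\sum q_i = 0$ is doing its real work.

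The bijection then follows by applying the same analysis to the inverse multiplier $m(x)^{-1} = x^{-q_0}(1+x)^{-q_1}\cdots(n-1+x)^{-q_{n-1}}$, whose exponents also sum to zero; the induced maps on finitely supported sequences compose to the identity in both directions. For the final ``moreover'' claim, if $\mu_1 = \mu_2 = \cdots = 0$, the formula collapses to $\mu'_0 = c_0 \mu_0 = \mu_0$ and $\mu'_j = 0$ for $j \ge 1$. The only routine verification needed is that the embedding $A_n(R) \hookrightarrow R(\!(x^{-1})\!)$ is well-defined as a ring map, which follows at once from the explicit power-series expansion above (and notably does not itself re-invoke the invertibility of $2, \ldots, n-1$, that hypothesis being already folded into the existence of the basis of Lemma~\ref{basis lemma}).
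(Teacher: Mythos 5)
Your proof is correct, but it takes a genuinely different route from the paper's. The paper first uses $\sum_i q_i = 0$ to reduce to the elementary multiplier $x^{-1}(i+x)$ and then computes entirely inside the basis of Lemma~\ref{basis lemma}: the identity $x^{-1}(l+x)^{-j} = l^{-j}x^{-1} - l^{-j}(l+x)^{-1} - \cdots - l^{-1}(l+x)^{-j}$ shows that the $(l+x)^{-j}$-part of $p$ contributes nothing to the $x^0, x^1, \ldots$ coefficients of $p'$, which yields the explicit recursion $\mu'_j = \mu_j + i\mu_{j+1}$; bijectivity is then obtained by back-substitution, and it is there that the paper invokes the invertibility of $i$ (to rule out preimage sequences that are not finitely supported). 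You instead pass to $R(\!(x^{-1})\!)$ and treat the general multiplier in one stroke: the observation that $m(x) = 1 + O(x^{-1})$ precisely because $\sum_i q_i = 0$ makes the role of that hypothesis transparent, the degree bookkeeping replaces the partial-fraction identity, and bijectivity comes for free from the symmetry $m \leftrightarrow m^{-1}$ rather than from back-substitution --- so, as you note, your argument does not re-invoke the invertibility of $2, \ldots, n-1$ beyond its use in establishing the basis. The one ironic caveat is that the authors state in the introduction that they are deliberately avoiding the formal-Laurent-series formalism of Bartholdi--Neuhauser--Woess in favour of direct polynomial manipulation, so your proof is closer in spirit to the argument they set out to replace. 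Two minor presentational points: the well-definedness of $A_n(R) \to R(\!(x^{-1})\!)$ is most cleanly justified by the universal property of localization (each of $x, 1+x, \ldots, n-1+x$ maps to a unit), and your argument only ever uses that this map is a ring homomorphism, not that it is injective, so calling it an embedding is an unnecessary (though true) claim.
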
  
  
\begin{proof}
We follow a similar approach to our proof of Lemma~\ref{First bijection lemma}.  This time, as $\sum_i q_i=0$, it is  enough to prove the result in the special case $p'=x^{-1}(i+x)p$ where $q_0 =-1$, $q_i =1$ and all $q_j=0$ for all $j \neq 0,i$.

Again, consider $p$ and $p'$ expressed as in \eqref{p} and \eqref{p'}.
The crucial calculations this time are that 
$$x^{-1}(i +x)  \sum_{j =0}^{\infty} \mu_j x^j  \ =  \    i \mu_0x^{-1}   + (\mu_0+i \mu_1) x^0 + (\mu_1+i \mu_2) x^1 + \cdots$$
and for $l \in \set{0, 1,  \ldots, n-1}$, using \eqref{use},
\begin{align*}
x^{-1}(i +x)  \sum_{j=1}^{\infty} \lambda_{l,j} (l+x)^{-j}   \ = \ \  &   \lambda_{l,1} x^{-1} +     \sum_{j=1}^{\infty} \left(   \lambda_{l,j+1}    +   (i - l) \lambda_{l,j}  \right) x^{-1} (l+x)^{-j} 
\end{align*}
which  has no $x^0, x^1, \ldots$ terms when written as a linear combination of the basis elements since, by induction on $j$  and when $l\neq 0$,  $$x^{-1} (l+x)^{-j}  \ = \  l^{-j} x^{-1} - l^{-j}(l+x)^{-1} - l^{-j+1}(l+x)^{-2} -  \cdots -  l^{-1} (l+x)^{-j}.$$

So 
$$(\mu'_0, \ \mu'_1,  \ \ldots  )  \ = \  (     \mu_0 + i \mu_1,  \ \mu_1 + i \mu_2, \  \ldots  ),$$ 
and the final claim of the lemma is evident. To see that $$(\mu_0, \  \mu_1, \ \ldots )   \ \mapsto \  (\mu'_0,  \ \mu'_1,  \ \ldots )$$ is invertible, recall that $i\in\{1,2,\ldots, n-1\}$ (so $i$ is invertible), and consider any $m$ such that  $ \mu'_{q} =0$ for all $q>m$.  Then $0= \mu_{ m+1} = \mu_{ m+2}= \cdots$ as otherwise we would have $\mu_{q+1} = -i^{-1}\mu_q$ for all   $q>m$ and so  the sequence  $\mu_{0}, \mu_{1}, \mu_{2}, \ldots$ would not be finitely supported.  So 
\begin{align*}
\mu_{m}  \ = \  \ &  \mu'_{m}  \\
\mu_{m-1}  \ = \ \ &  \mu'_{m-1} - i\mu_{ m} \\
 \vdots \ \ \ \\
\mu_{0}  \ = \  \ & \mu'_{0} - i\mu_{1}. 
\end{align*}
\end{proof} 

\begin{cor} \label{coefficients cor}
If $k_\ast =0$, then the coefficients of $(\ast +x)^{-1},  (\ast +x)^{-2}, \ldots$  in  $x^{k_0} (1+x)^{k_1} \cdots (n-1+x)^{k_{n-1}}$ are all zero.     
\end{cor}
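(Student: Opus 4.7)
The plan is to derive Corollary~\ref{coefficients cor} as a direct specialization of Lemma~\ref{First bijection lemma} applied to the element $p = 1 \in A_n(R)$. First I would unpack what the statement says in the basis language of Lemma~\ref{basis lemma}: we want to show that when $x^{k_0}(1+x)^{k_1}\cdots(n-1+x)^{k_{n-1}}$ is written as an $R$-linear combination of the basis $\{1, x^{j}, x^{-j}, (1+x)^{-j}, \ldots, (n-1+x)^{-j} \mid j \geq 1\}$, none of the $(\ast+x)^{-j}$ basis elements appears with a nonzero coefficient.

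Next I would set up the application of Lemma~\ref{First bijection lemma}. Take $q_i := k_i$ for each $i$; since $k_\ast = 0$, the hypothesis $q_\ast = 0$ holds. Choose $p := 1$. Expressed in the basis of Lemma~\ref{basis lemma}, $p = 1$ has all $(\ast+x)^{-j}$ coefficients equal to zero, so the input sequence $(\lambda_{\ast,1}, \lambda_{\ast,2}, \ldots)$ is the zero sequence. Then $p' = x^{k_0}(1+x)^{k_1}\cdots(n-1+x)^{k_{n-1}} \cdot 1$ is precisely the monomial of interest, and its $(\ast+x)^{-j}$ coefficients form the output sequence $(\lambda'_{\ast,1}, \lambda'_{\ast,2}, \ldots)$.

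To conclude, I would invoke the ``moreover'' clause of Lemma~\ref{First bijection lemma}. The hypothesis $0 = \lambda_{\ast,2} = \lambda_{\ast,3} = \cdots$ is satisfied trivially, and taking further $\lambda_{\ast,1} = 0$, the clause yields
\[
(\lambda'_{\ast,1}, \lambda'_{\ast,2}, \lambda'_{\ast,3}, \ldots) \ = \ \Bigl(0 \cdot \prod_{i \neq \ast}(i-\ast)^{k_i},\ 0,\ 0,\ \ldots\Bigr) \ = \ (0, 0, 0, \ldots),
\]
which is exactly the content of the corollary. (Alternatively, one could note that multiplication by a fixed invertible polynomial is $R$-linear on $A_n(R)$, and the bijection of Lemma~\ref{First bijection lemma} is the restriction of this linear map to the ``$\ast$-part'' of the basis expansion; being linear, it sends the zero sequence to the zero sequence.)

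There is no real obstacle here: all the substance has been packaged into Lemma~\ref{First bijection lemma}, and the only point requiring care is verifying that the hypothesis $q_\ast = 0$ of that lemma is supplied by the hypothesis $k_\ast = 0$ of the corollary. In particular, one does not need the full bijectivity of the map in Lemma~\ref{First bijection lemma}, only that the zero input produces the zero output, which the ``moreover'' clause makes explicit.
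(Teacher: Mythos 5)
Your proposal is correct and follows exactly the paper's own argument: the paper also deduces the corollary from the final ("moreover") statement of Lemma~\ref{First bijection lemma} in the special case $p=1$ (so that $\lambda_{\ast,j}=0$ for all $j$) with $q_l=k_l$ for all $l$. The only difference is that you have spelled out the verification of the hypothesis $q_\ast=0$ and the vanishing of the output sequence in more detail than the paper does.
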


\begin{proof}
This is the final statement of Lemma~\ref{First bijection lemma} in the special case $p=1$  (and hence $\lambda_{\ast,j}=0$ for all $j$), and $q_l=k_l$ for all $l$.
\end{proof}

\begin{cor} \label{another coefficients cor}
If $k_\ast =-1$, then the coefficient  of $(\ast +x)^{-1}$  in  $x^{k_0} (1+x)^{k_1} \cdots (n-1+x)^{k_{n-1}}$ is 
$$\prod_{i \in \set{0, \ldots, n-1 } \ssm \set{\ast} } (i-\ast)^{k_i}.$$
\end{cor}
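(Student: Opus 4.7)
The plan is to derive this directly from Lemma~\ref{First bijection lemma} by choosing the input polynomial cleverly. Whereas Corollary~\ref{coefficients cor} used $p=1$ in that lemma, here I would take $p = (\ast + x)^{-1}$, so that the only non-zero coefficient among $\lambda_{\ast,1}, \lambda_{\ast,2}, \ldots$ is $\lambda_{\ast,1} = 1$.

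To match the exponent data, set $q_\ast := 0$ and $q_i := k_i$ for each $i \in \{0,\ldots,n-1\}\ssm\{\ast\}$. These values satisfy the hypothesis $q_\ast = 0$ of Lemma~\ref{First bijection lemma}, and the resulting
$$p' \ = \ x^{q_0}(1+x)^{q_1}\cdots (n-1+x)^{q_{n-1}} \cdot (\ast+x)^{-1}$$
is precisely $x^{k_0}(1+x)^{k_1}\cdots (n-1+x)^{k_{n-1}}$, since the factor $(\ast+x)^{0}$ from $q_\ast = 0$ contributes nothing while multiplication by $(\ast+x)^{-1}$ supplies the exponent $k_\ast = -1$.

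Since $\lambda_{\ast,2} = \lambda_{\ast,3} = \cdots = 0$, the \emph{moreover} clause of Lemma~\ref{First bijection lemma} applies, yielding
$$\lambda'_{\ast,1} \ = \ \lambda_{\ast,1} \prod_{i \in \{0,\ldots,n-1\}\ssm\{\ast\}} (i - \ast)^{q_i} \ = \ \prod_{i \in \{0,\ldots,n-1\}\ssm\{\ast\}} (i-\ast)^{k_i},$$
which is exactly the coefficient of $(\ast+x)^{-1}$ in $p'$. There is no real obstacle: the only thing to check carefully is that the bookkeeping of exponents lines up so that $p'$ really equals the target polynomial, which it does because $q_\ast + (-1) = -1 = k_\ast$ and $q_i = k_i$ for $i \neq \ast$.
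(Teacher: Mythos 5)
Your proposal is correct and is essentially identical to the paper's own proof: both take $p=(\ast+x)^{-1}$ (so $\lambda_{\ast,1}=1$ and all other $\lambda_{\ast,j}=0$), set $q_\ast=k_\ast+1=0$ and $q_i=k_i$ for $i\neq\ast$, and invoke the final (``moreover'') clause of Lemma~\ref{First bijection lemma}. Nothing further is needed.
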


\begin{proof}
This is the final statement of Lemma~\ref{First bijection lemma} in the special case $p=(\ast +x)^{-1}$ (so $\lambda_{\ast,1}=1$ and $\lambda_{\ast,j}=0$ for all $j\neq 1$), $q_\ast=k_\ast+1=0$ and $q_l=k_l$ for all $l\neq \ast$.
\end{proof}

\begin{cor} \label{second coefficients cor}
If $k_{\infty} := - \sum_{i=0}^{n-1} k_i > 0$, then the coefficients of $x^0, x^1, \ldots$  in  $x^{k_0} (1+x)^{k_1} \cdots (n-1+x)^{k_{n-1}}$ are all zero.     
\end{cor}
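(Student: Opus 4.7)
My plan is to obtain Corollary~\ref{second coefficients cor} as a one-line consequence of the ``moreover'' clause in Lemma~\ref{Second bijection lemma}. The idea is to factor out enough negative powers of $x$ from the expression $x^{k_0}(1+x)^{k_1}\cdots(n-1+x)^{k_{n-1}}$ so that the remaining product has exponents summing to $0$, and then invoke that lemma with this remaining product as the ``multiplier'' and the extracted pure power of $x$ as the starting element $p_0$.

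Concretely, I would take $p_0 := x^{-k_\infty}$, $q_0 := k_0 + k_\infty$, and $q_i := k_i$ for $i = 1, \ldots, n-1$. The assumption $k_\infty = -\sum_{i} k_i$ immediately gives $\sum_i q_i = 0$, which is the hypothesis of Lemma~\ref{Second bijection lemma}, and by construction
\[
x^{q_0}(1+x)^{q_1}\cdots(n-1+x)^{q_{n-1}}\, p_0 \ = \ x^{k_0}(1+x)^{k_1}\cdots(n-1+x)^{k_{n-1}}
\]
is precisely the element whose coefficients the corollary addresses. Since $k_\infty \geq 1$, the element $p_0 = x^{-k_\infty}$ is itself one of the basis elements $x^{-j}$ ($j \geq 1$) of Lemma~\ref{basis lemma}; in particular, in its expansion in that basis, every $\mu_j$ with $j \geq 0$ is zero, so both $\mu_0 = 0$ and $\mu_1 = \mu_2 = \cdots = 0$ hold.

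The ``moreover'' clause of Lemma~\ref{Second bijection lemma} applied to $p_0$ then yields $(\mu'_0, \mu'_1, \mu'_2, \ldots) = (\mu_0, 0, 0, \ldots) = (0, 0, 0, \ldots)$, which is exactly the conclusion of the corollary. There is essentially no obstacle: the only small point to verify is that the ``moreover'' hypothesis is stated on the coefficients $\mu_j$ for $j \geq 1$ and does not preclude the case $\mu_0 = 0$, so it applies cleanly here. It is worth remarking that the hypothesis $k_\infty > 0$ is genuinely needed: if $k_\infty = 0$, the same lemma (applied instead with $p = 1$) shows the $x^0$ coefficient of the product is $1 \neq 0$.
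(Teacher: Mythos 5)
Your proposal is correct and is essentially identical to the paper's own proof: the paper also applies the final statement of Lemma~\ref{Second bijection lemma} with $q_0 = k_0 + k_\infty$, $q_i = k_i$ for the other indices, and $p = x^{-k_\infty}$, noting that $k_\infty > 0$ forces all the $\mu_j$ to vanish. Nothing further is needed.
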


\begin{proof}
This is the final statement of Lemma~\ref{Second bijection lemma}  with $q_0 = k_0 + k_{\infty}$ and $q_i = k_i$ for all other $i$ (so $\sum_{i=0}^{n-1} q_i=0$ as required) in the special case $p=x^{-k_{\infty}}$  (and since $k_{\infty}>0$, we have $\mu_j=0$ for all $j$).
\end{proof}
 
\begin{cor} \label{third coefficients cor}
If $\sum_{i=0}^{n-1} k_i = 0$, then  in  $x^{-k_0} (1+x)^{-k_1} \cdots (n-1+x)^{-k_{n-1}}$ 
 the coefficient  of $x^0$ is $1$ and  the coefficients of $x^1, x^2, \ldots$ are all zero.     
\end{cor}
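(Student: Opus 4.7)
The plan is to obtain this corollary as an immediate consequence of the final statement of Lemma~\ref{Second bijection lemma}, in exact parallel to the way Corollary~\ref{second coefficients cor} was deduced.

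Concretely, I would apply Lemma~\ref{Second bijection lemma} with $q_i := -k_i$ for $i = 0, 1, \ldots, n-1$. The hypothesis $\sum_{i=0}^{n-1} k_i = 0$ gives $\sum_{i=0}^{n-1} q_i = 0$, as required by the lemma. Then I would choose $p := 1 \in A_n(R)$, so that in the expansion \eqref{p} one has $\mu_0 = 1$ and $\mu_1 = \mu_2 = \cdots = 0$ (and all the $\lambda_{l,j}$ vanish). With these choices,
\[
p' \ = \ x^{q_0}(1+x)^{q_1} \cdots (n-1+x)^{q_{n-1}} \cdot 1 \ = \ x^{-k_0}(1+x)^{-k_1}\cdots (n-1+x)^{-k_{n-1}}
\]
is precisely the polynomial whose coefficients we wish to compute.

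The final sentence of Lemma~\ref{Second bijection lemma} then applies verbatim: since $0 = \mu_1 = \mu_2 = \cdots$, we conclude that the sequence of coefficients of $x^0, x^1, x^2, \ldots$ in $p'$ is $(\mu_0, 0, 0, \ldots) = (1, 0, 0, \ldots)$, which is exactly the statement of the corollary.

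There is no real obstacle here; the only thing to double-check is the bookkeeping of signs and indices, namely that setting $q_i = -k_i$ (rather than $q_i = k_i$) correctly produces the target polynomial and that the hypothesis $\sum k_i = 0$ is equivalent to $\sum q_i = 0$. Both are trivially verified, so the proof is essentially a one-line invocation of Lemma~\ref{Second bijection lemma}.
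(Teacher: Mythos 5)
Your proposal is correct and coincides exactly with the paper's own proof: both invoke the final statement of Lemma~\ref{Second bijection lemma} with $p=1$ (so $\mu_0=1$ and $\mu_j=0$ for $j\neq 0$) and $q_i=-k_i$ for all $i$, the hypothesis $\sum_i k_i=0$ supplying the required condition $\sum_i q_i=0$. Nothing further is needed.
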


\begin{proof}
This is the final statement of Lemma~\ref{Second bijection lemma} in the special case $p=1$  (so $\mu_0=1$ and $\mu_j=0$ for all $j\neq 0$) and $q_i=-k_i$ for all $i$.
\end{proof} 
 
 \begin{lemma} \label{shift coefficients}
For $p \in A_n(R)$, 
\begin{enumerate}
\item the coefficients  of $x^{0}, x^1, \ldots$ in $p$ equal those of $x^1, x^2, \ldots$ in $xp$, \label{case i}
\item  the coefficients  of $(\ast+x)^{-1}, (\ast+x)^{-2}, \ldots$ in $(\ast+x)p$ equal those of $(\ast+x)^{-2}, (\ast+x)^{-3}, \ldots$ in $p$.  
\end{enumerate}  
 \end{lemma}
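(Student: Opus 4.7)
The plan is to write $p$ explicitly in the basis from Lemma~\ref{basis lemma} and then multiply through by $x$ or $(\ast+x)$, tracking exactly which basis elements each product contributes to. Concretely, write
\[ p \ = \ \sum_{j=0}^\infty \mu_j x^j \ + \ \sum_{l=0}^{n-1} \sum_{j=1}^\infty \lambda_{l,j}(l+x)^{-j} \]
where only finitely many coefficients are non-zero, and compute each product term by term.

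For (i), multiplying through by $x$ sends $\mu_j x^j \mapsto \mu_j x^{j+1}$, which is already a basis element contributing $\mu_j$ to the coefficient of $x^{j+1}$. The remaining pieces are $\lambda_{l,j}\cdot x(l+x)^{-j}$. When $l=0$ these are $\lambda_{0,j} x^{1-j}$, which for $j=1$ is a constant (contributing to $x^0$) and for $j\geq 2$ is a non-positive power (contributing to $x^{-k}$ basis elements). When $l\neq 0$, I would use $x(l+x)^{-j} = (l+x)^{-j+1} - l(l+x)^{-j}$, which for $j=1$ is $1 - l(l+x)^{-1}$ and for $j\geq 2$ is a linear combination of the $(l+x)^{-k}$ basis elements. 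In every case, none of these contribute to any $x^k$ with $k\geq 1$, so the coefficient of $x^{k+1}$ in $xp$ is precisely $\mu_k$, proving (i).

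For (ii), fix $\ast\in\{0,\ldots,n-1\}$ and do the same bookkeeping for $(\ast+x)p$. The terms $(\ast+x)\mu_j x^j = \ast\mu_j x^j + \mu_j x^{j+1}$ contribute only to $x^k$ basis elements, not to $(\ast+x)^{-k}$. For the rational terms with $l\neq \ast$, the identity $(\ast+x)(l+x)^{-j} = (l+x)^{-j+1} + (\ast-l)(l+x)^{-j}$ shows these contribute only to $(l+x)^{-k}$ basis elements (and to $1=x^0$ in the $j=1$ case), not to $(\ast+x)^{-k}$ with $k\geq 1$. Finally, the terms with $l=\ast$ give $(\ast+x)\lambda_{\ast,j}(\ast+x)^{-j} = \lambda_{\ast,j}(\ast+x)^{-(j-1)}$, so for $j\geq 2$ this contributes $\lambda_{\ast,j}$ to the coefficient of $(\ast+x)^{-(j-1)}$, and for $j=1$ it contributes to $x^0$ only. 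Setting $k=j-1$, the coefficient of $(\ast+x)^{-k}$ in $(\ast+x)p$ equals $\lambda_{\ast,k+1}$, which is exactly the coefficient of $(\ast+x)^{-(k+1)}$ in $p$, proving (ii).

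The one subtlety to watch for is the boundary case $j=1$, where multiplying by $x$ or by $(\ast+x)$ can promote a pole into a non-negative power of $x$; but in both parts of the lemma these spillover terms land in the $x^0$ slot (or elsewhere), never in the ranges $x^1, x^2, \ldots$ or $(\ast+x)^{-1}, (\ast+x)^{-2},\ldots$ we are tracking, so they cause no trouble. The main obstacle is therefore purely one of careful bookkeeping rather than any real difficulty; linear independence of the basis (Lemma~\ref{basis lemma}) ensures that once the products are expanded into basis elements, reading off coefficients is unambiguous.
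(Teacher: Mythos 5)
Your proposal is correct and follows essentially the same route as the paper: the paper's proof also expands $p$ in the basis of Lemma~\ref{basis lemma} and observes that $x(l+x)^{-j} = (l+x)^{-j+1} - l(l+x)^{-j}$ contributes no $x^1, x^2, \ldots$ terms, while $(\ast+x)(l+x)^{-j} = (l+x)^{-j+1} + (\ast-l)(l+x)^{-j}$ for $l \neq \ast$ and $(\ast+x)x^j = \ast x^j + x^{j+1}$ contribute no $(\ast+x)^{-1}, (\ast+x)^{-2}, \ldots$ terms. Your write-up just carries out this bookkeeping in slightly more explicit detail, including the $j=1$ boundary cases.
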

 
\begin{proof}
Calculate in the manner of our proof of Lemma~\ref{First bijection lemma}.  The crucial point for (\textit{i}) is that $x(l+x)^{-j}  \ = \    (l+x)^{-j+1}  - l (l+x)^{-j}$ has no  $x^1, x^2, \ldots$  terms when $j \geq 1$.  The crucial points for  (\textit{ii}) are that  $(\ast+x)(l+x)^{-i}  \ = \    (l+x)^{-i+1} + (\ast - l)(l+x)^{-i}$  and $(\ast +x) x^j = \ast x^j + x^{j+1}$ have no  $(\ast+x)^{-1}, (\ast+x)^{-2}, \ldots$ terms when $l \in \set{0, 1, \ldots, n-1} \ssm \set{\ast}$ and $i \geq 1$ and when $j \geq 0$.
\end{proof}

\subsection{The bijection $\Phi$ between  $\Gamma_n(R)$ and the vertices of $\mathcal{H}_n(R)$} \label{general bijection}

 Define a map $\Phi$   from $\Gamma_n(R) = A_n(R) \rtimes \Z^n$ to the vertices of $\mathcal{H}_n(R)$ by
 $$(f, (h_0, \ldots, h_{n-1})) \ \mapsto \ ((\mathbf{a}^{\infty},h_{\infty}), (\mathbf{a}^0, h_0), \ldots,  (\mathbf{a}^{n-1},h_{n-1}) )$$ 
 where $h_{\infty} := -  h_0 -  \cdots - h_{n-1}$ and the sequences  $\mathbf{a}^{\infty},  \mathbf{a}^{0}, \ldots,  \mathbf{a}^{n-1}$  will be defined as follows (guided by Remark~\ref{rank-2 polynomial remark}). They  list the  coefficients of  elements of  $A_n(R)$, expressed as linear combinations of  the basis from Lemma~\ref{basis lemma}, specifically, for  $\ast=0, \ldots, n-1$,
 \begin{itemize}
 \item $\mathbf{a}^{\infty}$ lists the coefficients of $x^{0}, x^{1}, \ldots$ in $x^{h_{\infty}}f$, and 
 \item    $\mathbf{a}^{\ast}$ lists the coefficients of   $(\ast+x)^{-1}, (\ast+x)^{-2}, \ldots$ in $(\ast+x)^{-h_{\ast}}f$.  
 \end{itemize}
 
Our proof that $\Phi$ is a bijection will  involve 
\begin{equation*}
 \hat{f} \  := \  x^{-h_0} (1+x)^{-h_1} \cdots (n-1+x)^{-h_{n-1}}  f \label{fhat}
 \end{equation*}
and further sequences $\mathbf{b}^{\infty},  \mathbf{b}^{0}, \ldots,  \mathbf{b}^{n-1}$  defined by: 
 \begin{itemize}
\item $\mathbf{b}^{\infty}$ lists the coefficients of $x^{0}, x^{1}, \ldots$ in  $\hat{f}$, and
 \item  $\mathbf{b}^{\ast}$ lists the coefficients of   $(\ast+x)^{-1}, (\ast+x)^{-2}, \ldots$ in $\hat{f}$.
 \end{itemize}

 \begin{prop}  \label{it's a bijection}
  $\Phi$ is a bijection.
 \end{prop}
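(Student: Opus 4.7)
The plan is to factor $\Phi$ through the auxiliary polynomial $\hat{f} = x^{-h_0}(1+x)^{-h_1}\cdots(n-1+x)^{-h_{n-1}} f$ and its associated sequences $\mathbf{b}^{\infty}, \mathbf{b}^{0}, \ldots, \mathbf{b}^{n-1}$, and to build the bijection in two pieces.  The first piece is the correspondence $f \leftrightarrow (\mathbf{b}^{\infty}, \mathbf{b}^{0}, \ldots, \mathbf{b}^{n-1})$ (given the heights), which will come from Lemma~\ref{basis lemma}.  The second piece is the coordinate-by-coordinate correspondence $(\mathbf{b}^{\infty}, \mathbf{b}^{0}, \ldots, \mathbf{b}^{n-1}) \leftrightarrow (\mathbf{a}^{\infty}, \mathbf{a}^{0}, \ldots, \mathbf{a}^{n-1})$, which will come from a single application each of Lemmas~\ref{First bijection lemma} and~\ref{Second bijection lemma}.

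For the first piece I would observe that Lemma~\ref{basis lemma} says $\{1, x^j : j \geq 1\} \cup \{(l+x)^{-j} : 0 \leq l \leq n-1, \, j \geq 1\}$ is a basis for $A_n(R)$ over $R$, and by definition the entries of $\mathbf{b}^{\infty}$ are the coefficients of $1, x, x^2, \ldots$ in $\hat{f}$, while the entries of $\mathbf{b}^{l}$ are the coefficients of $(l+x)^{-j}$ for $j \geq 1$.  Thus $\hat{f}$ is uniquely recovered from $(\mathbf{b}^{\infty}, \mathbf{b}^{0}, \ldots, \mathbf{b}^{n-1})$, and $f = x^{h_0}(1+x)^{h_1}\cdots(n-1+x)^{h_{n-1}} \hat{f}$ is uniquely recovered from $\hat{f}$ and the heights.

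For the second piece, I note that $x^{h_{\infty}} f = x^{h_{\infty} + h_0}(1+x)^{h_1}\cdots(n-1+x)^{h_{n-1}} \hat{f}$, in which the exponents sum to zero since $h_{\infty} = -\sum_i h_i$; Lemma~\ref{Second bijection lemma} (with $p = \hat{f}$, $q_0 = h_{\infty} + h_0$, and $q_i = h_i$ for $i > 0$) then produces a bijection $\mathbf{b}^{\infty} \leftrightarrow \mathbf{a}^{\infty}$.  Similarly, for each $\ast \in \{0, \ldots, n-1\}$ one has $(\ast+x)^{-h_{\ast}} f = \prod_{i \neq \ast}(i+x)^{h_i}\,\hat{f}$, in which the exponent of $(\ast+x)$ is zero, so Lemma~\ref{First bijection lemma} (with $p = \hat{f}$, $q_{\ast} = 0$, and $q_i = h_i$ for $i \neq \ast$) produces a bijection $\mathbf{b}^{\ast} \leftrightarrow \mathbf{a}^{\ast}$.

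Assembling: $\Phi$ lands in $\mathcal{H}_n(R)$ because its assigned heights satisfy $h_{\infty} + h_0 + \cdots + h_{n-1} = 0$ by definition.  Conversely, given any vertex $((\mathbf{a}^{\infty}, h_{\infty}), (\mathbf{a}^0, h_0), \ldots, (\mathbf{a}^{n-1}, h_{n-1}))$ of $\mathcal{H}_n(R)$, the heights $(h_0, \ldots, h_{n-1})$ determine the $\mathbb{Z}^n$-component, and inverting the bijections above yields first a unique $(\mathbf{b}^{\infty}, \mathbf{b}^{0}, \ldots, \mathbf{b}^{n-1})$, hence a unique $\hat{f}$, hence a unique $f$.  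The main obstacle is purely bookkeeping: keeping the exponents straight and checking that the hypotheses $q_{\ast} = 0$ for Lemma~\ref{First bijection lemma} and $\sum_i q_i = 0$ for Lemma~\ref{Second bijection lemma} are met by the chosen tuples — but both are immediate from $h_{\infty} = -\sum_i h_i$.
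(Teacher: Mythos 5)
Your proposal is correct and follows essentially the same route as the paper: both factor $\Phi$ through $\hat{f}$ and the sequences $\mathbf{b}^{\infty}, \mathbf{b}^{0}, \ldots, \mathbf{b}^{n-1}$, recover $\hat{f}$ from these via Lemma~\ref{basis lemma}, and invoke Lemmas~\ref{First bijection lemma} and~\ref{Second bijection lemma} once per coordinate to pass between the $\mathbf{a}$- and $\mathbf{b}$-sequences. The only (immaterial) difference is that you apply the two lemmas with $p=\hat{f}$ and the $q_i$ of opposite sign to the paper's choice, which is fine since each lemma asserts a bijection.
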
 

\begin{proof} 
Suppose  $\mathbf{v} = ((\mathbf{a}^{\infty},h_{\infty}), (\mathbf{a}^0, h_0), \ldots,  (\mathbf{a}^{n-1},h_{n-1}))$ is a vertex of $\mathcal{H}_n(R)$ and so  $h_{\infty} = -  h_0 -  \cdots - h_{n-1}$.  We will explain that there is a unique  $g=(f, (h_0, \ldots, h_{n-1}))$ with $\Phi(g) = \mathbf{v}$.

The idea is to find the sequences $\mathbf{b}^{\infty}$, $\mathbf{b}^{0}$, \ldots, $\mathbf{b}^{n-1}$, for then we can recover $\hat{f}$ (and therefore $f$) from them since they  list all its coefficients when expressed as a linear combination of  the basis from Lemma~\ref{basis lemma}.   

For  $\ast = \infty$, this is possible (and unique) by   Lemma~\ref{Second bijection lemma}  applied
 with  $p = x^{h_{\infty}}f$ and $p' = \hat{f}$ (and so  $q_0 = -(h_{\infty}+ h_0)$, and $q_i=- h_i$ for $i=1, \ldots, n-1$).   It establishes a bijection taking $(\mu_0,  \mu_1,   \ldots)    =    \mathbf{a}^{\infty}$,  which lists the coefficients of $x^{0}, x^{1}, \ldots$ in  $x^{h_{\infty}}f$, to $\mathbf{b}^{\infty}   =    (\mu'_0, \mu'_1,  \ldots)$, which lists  the  coefficients of $x^{0}, x^{1}, \ldots$ in   $\hat{f}$.  Likewise, for $\ast = 0, 1, \ldots, n-1$, apply Lemma~\ref{First bijection lemma}  
with  $p = (\ast + x)^{-h_{\ast}}f$ and $p' = \hat{f}$  (and  so $q_i=-h_i$  for $i=0, 1, \ldots, n-1$ except that $q_{\ast} =0$).  It establishes a bijection taking  
$(\lambda_{\ast,1},   \lambda_{\ast,2},    \ldots )   =   \mathbf{a}^{\ast}$,  
which lists the coefficients of  $(\ast+x)^{-1}, (\ast+x)^{-2}, \ldots$
in $(\ast + x)^{-h_{\ast}}f$, to  $\mathbf{b}^{\ast}   =   (\lambda'_{\ast,1},   \lambda'_{\ast,2},   \ldots )$, which  lists the coefficients of  $(\ast+x)^{-1}, (\ast+x)^{-2}, \ldots$ in  $\hat{f}$.
\end{proof}

\subsection{Extending $\Phi$}

Next we  show  that $\Phi$ extends to a graph-isomorphism from the Cayley graph  $\mathcal{C}$ of $\Gamma_n(R)$ with respect to the generating set $$\set{ \, \left. (r, \e_i), \ (r, \e_j)(r, \e_k)^{-1}  \, \right| \, r \in R, \  0 \leq i, j, k  \leq  n-1  \textup{ and }   j < k \, }$$   
 to the 1-skeleton of $\mathcal{H}_n(R)$.  

Recall that we denote the  standard basis for  $\Z^n$ by  $\mathbf{e}_0, \ldots, \mathbf{e}_{n-1}$.
So, if  $\mathbf{h} = (h_0, \ldots, h_{n-1}) \in \Z^n$, then  $\mathbf{h} + \mathbf{e}_i \ = \  (h_0, \ldots, h_{i-1}, h_i +1, h_{i+1}, \ldots,  h_{n-1})$.  Recall that for such $\mathbf{h}$ and for $f \in A_n(R)$, $$f \cdot \mathbf{h} \ = \  f x^{h_0} (1+x)^{h_1} \cdots (n-1+x)^{h_{n-1}}.$$  
(Warning: $f \cdot \mathbf{0} =f$ and  $f \cdot (\mathbf{h} + \mathbf{h}')$ equals $(f \cdot \mathbf{h}) \cdot \mathbf{h}'$, and not in general  $f\cdot \mathbf{h} + f \cdot \mathbf{h}'$.)  Also recall that the group operation on $\Gamma_n(R)$ is   $(f,\mathbf{h})(\hat{f},\hat{\mathbf{h}})   =   (f + \hat{f} \cdot {\mathbf{h}},\mathbf{h} + \hat{\mathbf{h}})$.   

Suppose $g   =   (f, \mathbf{h} ) \in \Gamma_n(R)$ where $f \in A_n(R)$ and $\mathbf{h}   \in \Z^n$. 
We  show below  that post-multiplying $g$ by the elements of the generating set  and their inverses gives

\begin{align}
g(r, \mathbf{e}_j) \ = \ & \left( f+ r \cdot \mathbf{h},  \ \mathbf{h} + \mathbf{e}_j \right), \label{j} \\
g(r, \mathbf{e}_j)^{-1} \ = \ &   \left( f -r \cdot  (\mathbf{h}- \mathbf{e}_j),  \ \mathbf{h} - \mathbf{e}_j \right),     \label{-j}  \\
 g (r, \e_j)(r, \e_k)^{-1}  \ = \  &    \left( f +  (k-j) r \cdot (\mathbf{h} - \mathbf{e}_k ), \ \mathbf{h} +  \mathbf{e}_j -\mathbf{e}_k \right),   \label{j-k} \\ 
  g  (r, \e_k)(r, \e_j)^{-1}   \ = \ &     \left( f +  (j-k) r \cdot (\mathbf{h} - \mathbf{e}_j ), \ \mathbf{h} +  \mathbf{e}_k -\mathbf{e}_j \right)      \label{k-j}
 \end{align}
for all $r \in R$ and all $j,k\in\{0, \ldots, n-1\}$.
The explanation is that \eqref{j} is immediate from how group multiplication is defined,  \eqref{-j} uses that
 $$(r, \mathbf{e}_j)^{-1}   \ =  \   ( -r \cdot(-\mathbf{e}_j), - \mathbf{e}_j),$$ the key calculation for \eqref{j-k} is that $$ r\cdot \mathbf{h} - r \cdot (\mathbf{h} + \mathbf{e}_j - \mathbf{e}_k ) \ = \ r  \left( 1- \frac{j+x}{k+x} \right) \cdot  \mathbf{h} \ = \ r  \frac{k-j}{k+x}  \cdot  \mathbf{h}     \  = \ (k-j) r \cdot (\mathbf{h} - \mathbf{e}_k ),$$ and \eqref{k-j} is     immediate from  \eqref{j-k}.

Suppose$$\Phi(g) \ =  \ ((\mathbf{a}^{\infty},h_{\infty}), (\mathbf{a}^0, h_0), \ldots,  (\mathbf{a}^{n-1},h_{n-1}) ).$$ 
We claim next that     $\Phi$ maps
 \begin{align*}
 g(r, \mathbf{e}_j) \ \mapsto \ & \left(   \,    \left(  \rule{0pt}{13pt} \left(a_2^{\infty}, a_3^{\infty}, \ldots \right), h_{\infty}-1 \right),  \  \ldots,   \left( \rule{0pt}{13pt}  \left( \alpha_j  + r \beta_j, a_1^j, a_2^j, \ldots \right), h_j+1 \right),    \    \ldots  \,     \rule{0pt}{16pt}\right), \\
g(r, \mathbf{e}_j)^{-1} \ \mapsto \ & \left(   \,    \left(  \rule{0pt}{13pt} \left(  \alpha'_j   - r \beta'_j, a_1^{\infty}, a_2^{\infty}, \ldots \right), h_{\infty}+1 \right),  \  \ldots,      \left( \rule{0pt}{13pt}  \left(a_2^j, a_3^j, \ldots \right), h_j-1 \right),    \ \ldots       \rule{0pt}{16pt}\right), \\
g (r, \e_j)(r, \e_k)^{-1} \ \mapsto \ & \left(    \, \ldots,   \,  \left(  \rule{0pt}{13pt}  \left(   \alpha_{jk}  + r \beta_{jk}, a^j_1, a^j_2, \ldots \right), h_j+1\right),   \ \ldots, \left(  \rule{0pt}{13pt} \left(a_2^{k}, a_3^{k}, \ldots \right), h_{k}-1 \right),  \, \ldots    \rule{0pt}{16pt}\right),   \\
g(r, \e_k)(r, \e_j)^{-1}  \ \mapsto \ & \left(    \, \ldots,   \left(  \rule{0pt}{13pt} \left(a_2^{j}, a_3^{j}, \ldots \right), h_{j}-1 \right),  \,   \ \ldots,   \left(  \rule{0pt}{13pt}  \left(  \alpha'_{jk}  +  r \beta'_{jk}, a^k_1, a^k_2, \ldots \right), h_k+1\right),    \, \ldots    \rule{0pt}{16pt}\right),  
\end{align*}
where  the pairs indicated by ellipses are unchanged from the corresponding $(\mathbf{a}^i, h_i)$   in $\Phi(g)$,  and in terms of linear combinations of the basis  established in Lemma~\ref{basis lemma},
\begin{align*}
& \alpha_j  \text{ is the coefficient of }  (j+x)^{-1}  \text{ in }  (j+x)^{-h_{j}-1} f,  \\
& \alpha'_j     \text{ is the coefficient of }   x^0  \text{ in }     x^{h_{\infty}+1} f,     \\
& \alpha_{jk}    \text{ is the coefficient of }    (j+x)^{-1}  \text{ in }  (j+x)^{-h_{j}-1} f,    \\
& \alpha'_{jk}    \text{ is the coefficient of }  (k+x)^{-1}   \text{ in }   (k+x)^{-h_{k}-1} f,   \\
& \beta_j  = \prod_{i \in \set{0, \ldots, n-1 } \ssm \set{j}}  (i-j)^{h_i}, \text{the coefficient of }   (j+x)^{-1}  \text{ in }   (j+x)^{-h_{j}-1} \cdot \mathbf{h},\\
& \beta'_j = 1,    \text{the coefficient of }   x^0  \text{ in }   x^{h_{\infty}+1} \cdot (\mathbf{h} - \mathbf{e}_j), \\
& \beta_{jk} =  \prod_{i \in \set{0, \ldots, n-1 } \ssm \set{j} }  (i-j)^{h_i},  \text{the coefficient of }  (j+x)^{-1}   \text{ in }  (k-j) (j+x)^{-h_{j}-1} \cdot (\mathbf{h}-\mathbf{e}_k),   \\
& \beta'_{jk} =   \prod_{i \in \set{0, \ldots, n-1 } \ssm \set{k} } (i-k)^{h_i},   \text{the coefficient of }  (k+x)^{-1}  \text{ in }   (j-k)(k  +x)^{-h_{k}-1} \cdot (\mathbf{h}-\mathbf{e}_j).
\end{align*}

(The values of the coefficients   $\beta_j, \beta_{jk}$ and $\beta'_{jk}$  are as stated as a consequence of Corollary~\ref{another coefficients cor}  and $\beta'_j$ as a consequence of Corollary~\ref{third coefficients cor}.)

Here is why.   First note that the second entries (those involving $h_\infty, h_1, \ldots, h_{n-1}$) of all the coordinates are correct: they can be read off the vectors in the second coordinates of the righthand sides of \eqref{j}--\eqref{k-j}. 
Secondly, note that the case of  $\Phi(g(r, \mathbf{e}_j)(r, \mathbf{e}_k)^{-1})$ is identical to that of $\Phi(g(r, \mathbf{e}_k)(r, \mathbf{e}_j)^{-1})$, save that $j$ and $k$ are interchanged.  So we will only address the former.

Here is  why the  $(\mathbf{a}^i, h_i)$  indicated by ellipses in the above four equations are indeed the same as the corresponding  $(\mathbf{a}^i, h_i)$ in $\Phi(g)$. We compare the $(\ast+x)^{-1}, (\ast+x)^{-2}, \ldots$ coefficients of the appropriate polynomials.  

\textit{Case  $\Phi(g(r, \mathbf{e}_j))$.}  The polynomials in question are   $(\ast+x)^{-h_{\ast}}(f + r\cdot \mathbf{h})$ and $(\ast+x)^{-h_{\ast}}f$.  The relevant coefficients agree when $\ast \notin \{\infty,  j\}$ since those of   $$(\ast+x)^{-h_{\ast}}  r\cdot \mathbf{h} \ = \  r \prod_{l \in \set{0, 1, \ldots, n-1} \ssm \set{\ast}}  (l+x)^{h_l}$$ are all zero by Corollary~\ref{coefficients cor}. 

\textit{Case   $\Phi(g(r, \mathbf{e}_j)^{-1})$.} Similarly, the relevant coefficients of  
  $$(\ast+x)^{-h_{\ast}} (-r\cdot (\mathbf{h} - \mathbf{e}_j ))  \ = \  -r (j+x)^{-1} \prod_{l \in \set{0, 1, \ldots, n-1} \ssm \set{\ast}}  (l+x)^{h_l}$$  are all zero  when  $\ast \notin \{\infty,  j\}$ by the same corollary.
   
\textit{Case  $\Phi(g(r, \mathbf{e}_j)(r, \mathbf{e}_k)^{-1})$.}   Similarly, when  $\ast \notin \{\infty,  j, k\}$ the relevant coefficients of  $(\ast+x)^{-h_{\ast}} (k-j) r \cdot (\mathbf{h} - \mathbf{e}_k )$  are all zero.   And, for the $\ast= \infty$ case, the   coefficients of $x^0, x^1, \ldots$  in   $x^{h_{\infty} } (k-j) r \cdot (\mathbf{h} - \mathbf{e}_k )$  are all zero by 
Corollary~\ref{second coefficients cor}  (with $k_0=h_\infty+h_0$, $k_k=h_k-1$ and $k_l=h_l$ for all other $l$) since $h_{\infty} + h_0 + \cdots + h_{n-1} -1  = -1 <0$.

  Now we turn to the coordinates which differ after multiplication by a generator.  
 
\textit{Why the $\infty$-coordinate of $\Phi(g(r, \mathbf{e}_j))$ is $\left( \rule{0pt}{13pt} \left(a_2^{\infty}, a_3^{\infty}, \ldots \right), h_{\infty}-1 \right)$.}  
We need to determine the coefficients of   $x^0, x^1, \ldots$ in  $x^{h_{\infty} -1} (f+ r \cdot \mathbf{h})$. Those of     $x^{h_{\infty} -1} r \cdot \mathbf{h}$ are all zero 
by Corollary~\ref{second coefficients cor}.   Lemma~\ref{shift coefficients}(\textit{i}) tells us that the    coefficients of $x^0, x^1, \ldots$ in    $x^{h_{\infty} -1} f$ equal those of  $x^1, x^2, \ldots$ in  $x^{h_{\infty}} f$, and so  are  $a_2^{\infty}, a_3^{\infty}, \ldots$ by definition.

\textit{Why the $j$-coordinate of $\Phi(g(r, \mathbf{e}_j)^{-1})$ is   $\left( \rule{0pt}{13pt}  \left(a_2^j, a_3^j, \ldots \right), h_j-1 \right)$.} The  $(j+x)^{-1}, (j+x)^{-2}, \ldots$ coefficients of  $(j+x)^{-h_{j}+1} (f -  r\cdot (\mathbf{h} - \mathbf{e}_j))$  are  $a_2^j, a_3^j, \ldots$ since those of   $(j+x)^{-h_{j}+1}    r\cdot (\mathbf{h} - \mathbf{e}_j) = (j+x)^{-h_{j}}    r \cdot  \mathbf{h}  $ are all zero by Corollary~\ref{coefficients cor} and those of   $(j+x)^{-h_{j}+1} f$ equal the  $(j+x)^{-2}, (j+x)^{-3}, \ldots$ coefficients of  $(j+x)^{-h_{j}} f$ by Lemma~\ref{shift coefficients}(\textit{ii}).
 
 \textit{Why the  $k$-coordinate of $\Phi(g (r, \e_j)(r, \e_k)^{-1})$ is  $\left(  \rule{0pt}{13pt} \left(a_2^{k}, a_3^{k}, \ldots \right), h_{k}-1 \right)$.} The $(k+x)^{-1}, (k+x)^{-2}, \ldots$ coefficients of  $(k+x)^{-h_{k}+1} (f + (k-j) r\cdot (\mathbf{h} - \mathbf{e}_k))$ are $a_2^{k}, a_3^{k}, \ldots$ similarly to the previous case.

\textit{Why the $j$-coordinate of $\Phi(g(r, \mathbf{e}_j))$ is  $\left( \rule{0pt}{13pt}  \left( \alpha_j + r\beta_j, a_1^j, a_2^j, \ldots \right), h_j+1 \right)$.}  We need to check that the  $(j+x)^{-1}, (j+x)^{-2}, \ldots$ coefficients of   $(j+x)^{-h_{j}-1} (f+ r \cdot \mathbf{h})$ are $ \alpha_j + r\beta_j, a_1^j, a_2^j, \ldots$. The $(j+x)^{-2}, (j+x)^{-3}, \ldots$ coefficients are $a_1^j, a_2^j, \ldots$  since those of   $(j+x)^{-h_{j}-1}    r\cdot \mathbf{h} = (j+x)^{-1} ((j+x)^{-h_j}   r \cdot  \mathbf{h})  $ are all zero by Corollary~\ref{coefficients cor} and those of   $(j+x)^{-h_{j}-1} f$ equal the  $(j+x)^{-1}, (j+x)^{-2}, \ldots$ coefficients of  $(j+x)^{-h_{j}} f$ by Lemma~\ref{shift coefficients}(\textit{ii}) for the same reasons as in earlier cases.   Its $(j+x)^{-1}$-coefficient is  $\alpha_j +  r\beta_j$ by definition.

\textit{Why the $\infty$-coordinate of $\Phi(g(r, \mathbf{e}_j)^{-1})$ is  $\left(  \rule{0pt}{13pt} \left(  \alpha'_j  -  r \beta'_j, a_1^{\infty}, a_2^{\infty}, \ldots \right), h_{\infty}+1 \right)$.}  The $x^0, x^1, \dots$ coordinates of 
$x^{h_{\infty}+1} (f -  r\cdot (\mathbf{h} - \mathbf{e}_j))$ are  $ \alpha'_j  - r \beta'_j, a_1^{\infty}, a_2^{\infty}, \ldots$ for similar reasons.

\textit{Why the  $j$-coordinate of  $\Phi(g (r, \e_j)(r, \e_k)^{-1})$  is $\left(  \rule{0pt}{13pt}  \left(   \alpha_{jk}  + r \beta_{jk}, a^j_1, a^j_2, \ldots \right), h_j+1\right)$.}  The  $(j+x)^{-1}, (j+x)^{-2}, \ldots$ coefficients of  $(j+x)^{-h_{j}-1} (f + (k-j) r\cdot (\mathbf{h} - \mathbf{e}_k))$ are $\alpha_{jk}  + r \beta_{jk}, a^j_1, a^j_2, \ldots$ likewise.

The set of vertices $\mathcal{V}$ in $\mathcal{H}_n(R)$  that are reached by traveling from   $\Phi(g)$ along  a single edge partitions into $(n+1) n$ subsets:  travel along the unique downwards edge  in one of the $n+1$ coordinate-trees,  travel upwards  along one of an $R$-indexed family of edges in another, and    remain  stationary in the rest.

As we have seen, for each element $x$ of the generating set 
$$\set{ \, \left. (r, \e_i), \ (r, \e_j)(r, \e_k)^{-1}  \, \right| \, r \in R, \  0 \leq i, j, k  \leq  n-1  \textup{ and }   j < k \, }$$ 
the location of $\Phi(gx)$ and  $\Phi(gx^{-1})$ falls in one of these subsets.  Thereby the union of this generating set together with the set of the inverses of its elements has $(n+1) n$   subsets which correspond to the  $(n+1) n$ subsets of $\mathcal{V}$.    Indeed, each subset contains one $R$-indexed family of generators or inverse-generators.

 Since $\alpha_j$ and $\beta_j$ do not depend on $r$  and $\beta_j$ is invertible (since $2, 3, \ldots, n-1$ are invertible), for fixed $j$, the map  $r  \mapsto \alpha_j + r \beta_j$ is a bijection $R \to R$.  So $g (r, e_j)  \mapsto  \Phi(g(r, e_j))$ is a bijection between a subset of the neighbours of $g$   in the Cayley graph $\mathcal{C}$ and   one of these subsets of $\mathcal{V}$.  
 
Likewise, because $ \beta'_j,  \beta_{jk},  \beta'_{jk}$ are invertible (since $2, 3, \ldots, n-1$ are invertible),  
\begin{align*}
 r & \mapsto \alpha'_j - r \beta'_j, \\ 
r & \mapsto \alpha_{jk} +  r \beta_{jk},  \\ 
r  & \mapsto \alpha'_{jk} + r \beta'_{jk}
\end{align*}
 are all bijections $R \to R$.     So as  $\alpha'_j$, $\alpha_{jk}$, $\alpha'_{jk}$,  $\beta'_j$, $\beta_{jk}$, and $\beta'_{jk}$ do not depend on $r$, there are similar bijections between subsets of neighbours of $g$ and
 subsets of $\mathcal{V}$.  Combined, these  bijections give a bijection  from the neighbours of $g$ in $\mathcal{C}$ to the neighbours of $\Phi(g)$ in $\mathcal{V}$.
 
There are no double-edges and no edge-loops in either graph:  for the 1-skeleton of $\mathcal{H}_n(R)$ this is straightforward from the definition, and it therefore follows from the above for $\mathcal{C}$.   So $\Phi$ extends to an isomorphism from $\mathcal{C}$ to the  1-skeleton of $\mathcal{H}_n(R)$, completing our proof.

\section{Presentations of $\Gamma_1$,  $\Gamma_1(m)$,  $\Gamma_2$ and  $\Gamma_2(m)$}    \label{presentations section}

In this section we give presentations of $\Gamma_1$,  $\Gamma_1(m)$,  $\Gamma_2$ and  $\Gamma_2(m)$ which reflect their descriptions as horocyclic products of trees.  Our presentations for  $\Gamma_2$ include one which we will prove in Section~\ref{Cayley complex section} to have Cayley 2-complex $\mathcal{H}_2(\Z)$.

Our conventions are that $[a,b] = a^{-1}b^{-1} ab$ and $a^{nb} = b a^n b^{-1}$ 
 for group elements $a$, $b$ and integers $n$.  Our group actions are on the right. 

\begin{prop} \label{the presentations}
Presentations for the group $$\Gamma_1 \ = \     \Z \wr \Z   \  \cong \  \Z[x,x^{-1}]\rtimes \Z  \   =  \  \set{  \ \left. \left(\begin{array}{cc}x^{k}  & f \\0 & 1\end{array}\right) \  \right| \  k  \in \Z, \ f \in \Z[x,x^{-1}] \  }$$  include 
\begin{enumerate}[label=(\roman*)]
\item $\left\langle  \ a, t \ \left| \   \left[a,a^{t^k}\right]=1 \ (k \in \Z)  \  \right. \right\rangle$, \label{i}
\item  $\left\langle  \ \lambda, \mu \ \left| \   \lambda^k \left(\lambda^{-1} \mu \lambda^{-1}\right)^k = \mu^k \lambda^{-k} \ (k \in \Z)  \
 \right. \right\rangle$,  \label{ii}
\item     $\left\langle  \ \lambda_i \  (i \in \Z) \ \left| \  {\lambda_i}^k{\lambda_j}^{-k} =  {\lambda_{-j}}^k{\lambda_{-i}}^{-k}  \ (i, j, k \in \Z)  \  \right. \right\rangle.$ \label{iii}
\end{enumerate}
These are related  via $\lambda=t$, $\mu=at$, and $\lambda_i = a^it$.  
   \end{prop}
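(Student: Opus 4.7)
My plan is to prove (i) directly from the semi-direct product structure of $\Gamma_1$, and to deduce (ii) and (iii) from (i) by Tietze transformations. For (i), I would let $G$ denote the group with the stated presentation and let $N$ be the normal closure of $a$ in $G$. Since every relation involves only conjugates of $a$, the quotient $G/N$ is the infinite cyclic group $\langle t\rangle$. The subgroup $N$ is generated by $\{a^{t^k}:k\in\Z\}$, and the relations assert precisely that these pairwise commute, so $N$ is a homomorphic image of the free abelian group $\bigoplus_{k\in\Z}\Z$. The natural map $G\to\Gamma_1$ sending $a\mapsto(1,0)$ and $t\mapsto(0,1)$ is well-defined (the relations hold in the abelian base of $\Gamma_1$) and surjective, and its restriction to $N$ sends $a^{t^k}\mapsto(x^k,0)$. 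Because $\{x^k\}$ is a $\Z$-basis of $\Z[x,x^{-1}]$, no nontrivial $\Z$-linear relation holds among the $a^{t^k}$, so $N\cong\bigoplus_{k\in\Z}\Z$ and $G\cong N\rtimes\Z\cong\Gamma_1$.

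For (ii), I would set $\lambda:=t$ and $\mu:=at$, recover $a=\mu\lambda^{-1}$ and $t=\lambda$, and write $\alpha:=\mu\lambda^{-1}$. A routine expansion in the free group on $\lambda,\mu$ gives
\[
\lambda^k(\lambda^{-1}\mu\lambda^{-1})^k \,=\, \alpha^{\lambda^{k-1}}\alpha^{\lambda^{k-2}}\cdots\alpha^\lambda\alpha
\quad\text{and}\quad
\mu^k\lambda^{-k} \,=\, \alpha\,\alpha^\lambda\cdots\alpha^{\lambda^{k-1}},
\]
so the relation of (ii) becomes $\alpha^{\lambda^{k-1}}\cdots\alpha=\alpha\cdots\alpha^{\lambda^{k-1}}$. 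One implication is immediate: the (i)-relations make the $\alpha^{\lambda^i}$ pairwise commute, so both sides agree. Conversely, the $k=2$ case of (ii) yields $[\alpha,\alpha^\lambda]=1$; I would proceed by induction on $k$, showing that the (ii)-relations for $2,\ldots,k$ force $[\alpha^{\lambda^i},\alpha^{\lambda^j}]=1$ for $0\le i<j\le k-1$. The induction step also conjugates the $(k-1)$-st relation by $\lambda$ to produce commutativity among $\alpha^\lambda,\ldots,\alpha^{\lambda^{k-1}}$; combined with the inductive hypothesis, every pair commutes except possibly $(\alpha,\alpha^{\lambda^{k-1}})$, and the $k$-th relation then collapses to precisely that commutator.

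For (iii), I would set $\lambda_i:=a^it$, so that $t=\lambda_0$ and $a=\lambda_1\lambda_0^{-1}$. A direct expansion in the free group on $a,t$ gives $\lambda_i^k=a^i(a^t)^i\cdots(a^{t^{k-1}})^i\,t^k$; in $\Gamma_1$ the factors $(a^{t^l})^i$ commute pairwise, so
\[
\lambda_i^k\lambda_j^{-k} \;=\; \prod_{l=0}^{k-1}(a^{t^l})^{i-j} \;=\; \lambda_{-j}^k\lambda_{-i}^{-k},
\]
verifying the (iii)-relations hold in $\Gamma_1$. For the converse, I would observe that the $i=1,j=0$ subfamily of (iii) translates back (in terms of $a,t$) to exactly the family of (ii)-relations, since both sides collapse respectively to $a\cdot a^t\cdots a^{t^{k-1}}$ and its reverse. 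Hence by the argument in (ii), this subfamily alone suffices to derive all of the (i)-relations, and (iii) is again a presentation of $\Gamma_1$.

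The main obstacle will be the inductive argument in (ii). Extracting $[\alpha,\alpha^{\lambda^{k-1}}]=1$ from the $k$-th relation requires enough prior commutativity to freely reorder the middle factors on both sides; the key trick is to invoke the inductive hypothesis twice, once directly and once after conjugating the $(k-1)$-st relation by $\lambda$, which together supply exactly the commutators needed to cancel down to a single commutator.
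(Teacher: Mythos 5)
Your treatments of (i) and (ii) are correct and essentially identical to the paper's: the paper likewise obtains (i) from the standard presentation of a semidirect product, and reduces (ii) to the observation that $t^k(t^{-1}a)^k$ and $(at)^kt^{-k}$ freely equal $a^{t^{k-1}}\cdots a^t a$ and $a\,a^t\cdots a^{t^{k-1}}$, leaving the equivalence of the two relation families to ``a straightforward induction''---which is exactly the induction you spell out. (One small wording issue: in the induction step you should conjugate by $\lambda$ the commutators already derived from relations $2,\ldots,k-1$, rather than ``the $(k-1)$-st relation'' itself; since conjugates of consequences of relators are again consequences, this is legitimate and gives the commutativity of $\alpha^{\lambda},\ldots,\alpha^{\lambda^{k-1}}$ that you need.)

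Part (iii) contains a genuine gap, and it is the same gap as in the paper's own proof, so you should not take the published argument as a safe model here. You verify that the relations of (iii) hold in $\Gamma_1$ under $\lambda_i\mapsto a^it$ and that the $(i,j)=(1,0)$ subfamily reproduces the relations of (ii); but presentation (iii) has infinitely many generators, and nothing in your argument shows that the remaining ones are redundant, i.e.\ that $\lambda_i=(\lambda_1\lambda_0^{-1})^i\lambda_0$ is a consequence of the stated relations. This is not a routine omission: it fails. Abelianizing, each relator $\lambda_i^k\lambda_j^{-k}\lambda_{-i}^{k}\lambda_{-j}^{-k}$ becomes $k(e_i+e_{-i}-e_j-e_{-j})$, so the abelianization of the presented group is $\bigoplus_{i\in\Z}\Z$ modulo the relations $e_i+e_{-i}=2e_0$, which is free abelian of infinite rank, whereas $\Gamma_1^{\textup{ab}}\cong\Z^2$. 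Concretely, $e_2-2e_1+e_0$ does not lie in that relation subgroup, so $\lambda_2=\lambda_1\lambda_0^{-1}\lambda_1$ is not derivable and the group presented by (iii) surjects onto $\Gamma_1$ with nontrivial kernel. To complete the argument one must add relations identifying $\lambda_i\lambda_j^{-1}$ with $\lambda_{i'}\lambda_{j'}^{-1}$ whenever $i-j=i'-j'$ (for instance $\lambda_{i+1}\lambda_i^{-1}=\lambda_1\lambda_0^{-1}$ for all $i$); with those in place the Tietze elimination of the extra generators goes through and your reduction to (ii) finishes the proof.
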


\begin{proof}   
As an abelian group, $$\Z[x,x^{-1}]  \ =  \  \bigoplus_{i \in \Z} \Z \ = \  \langle \,  a_i \  (i \in \Z)  \mid  [a_i, a_j] =1 \ \forall i,j \, \rangle.$$   
So   $\Z[x,x^{-1}]\rtimes \Z = \  \langle \,   t, a_i \  (i \in \Z)  \mid t a_i t^{-1} = a_{i+1}, \  [a_i, a_j] =1 \ \forall i,j \, \rangle$, which simplifies with $a = a_0$ to give   \emph{\ref{i}}.  
 
For  \emph{\ref{ii}}, it suffices to show that
$\left\langle  \ a, t \ \left| \   \left[a,a^{t^k}\right]=1 \ (k \in \Z)  \
 \right. \right\rangle$
can be re--expressed as   
$$\left\langle  \ a, t \ \left| \  t^k(t^{-1}a)^{k} = (at)^k t^{-k}  \ (k \in \Z)  \
 \right. \right\rangle,$$
 since the latter becomes  \emph{\ref{ii}}   via $\lambda=t$ and $\mu=at$.   
 Well, $t^k(t^{-1}a)^{k}$ and $(at)^k t^{-k}$ freely equal
$(t^{k-1}at^{-(k-1)})  \ldots   (tat^{-1}) \, a$ and $a \, (tat^{-1}) \ldots (t^{k-1}at^{-(k-1)})$,   
respectively, and a straight--forward induction shows that the family $\set{a^{t^k}a = a a^{t^k}}_{k \in \Z}$  is equivalent to  $$\set{ \, a^{t^k} \cdots  a^t a = a a^t \cdots a^{t^k}, \ a^{t^{-k}} \cdots  a^{t^{-1}} a = a a^{t^{-1}} \cdots a^{t^{-k}} \, }_{k>0}.$$ 
 
Finally we establish \emph{\ref{iii}}.  If $\lambda_i = a^it$  then $\lambda_i$ must correspond to 
$ \left(\begin{array}{cc}x  &  i   \\0 & 1\end{array}\right)$ and so ${\lambda_i}^k$  to 
$\left(\begin{array}{cc}x^k &  i(1+ \cdots + x^{k-1})  \\0 & 1\end{array}\right)$ and  ${\lambda_i}^{-k}$ to  $\left(\begin{array}{cc}x^{-k} &  -i(x^{-k}+ \cdots + x^{-1})  \\0 & 1\end{array}\right)$.  From there it is easy to check that the relations ${\lambda_i}^k{\lambda_j}^{-k} =  {\lambda_{-j}}^k{\lambda_{-i}}^{-k}$ correspond to valid matrix identities, and so must be consequences of the relations $ \left[a,a^{t^k}\right]=1$  ($k \in \Z$).

Conversely, given that $\lambda_0 = \lambda = t$ and $\lambda_1=\mu=at$, we find that $\lambda_{-1} = a^{-1}t =  \lambda \mu^{-1} \lambda$, and so 
the relations $\lambda^k \left(\lambda^{-1} \mu \lambda^{-1}\right)^k = \mu^k \lambda^{-k}$ of  \emph{\ref{ii}}  are  ${\lambda_i}^k{\lambda_j}^{-k} =  {\lambda_{-j}}^k{\lambda_{-i}}^{-k}$  in the case $i=0$ and $j=-1$.
\end{proof}

On introducing  torsion,  adding the relation $a^m=1$ to   Presentation~\emph{\ref{i}} of Proposition~\ref{the presentations}, we get presentations for $\Gamma_1(m)$.  These can be reorganized in the manner of Presentations~\emph{\ref{ii}} and~\emph{\ref{iii}}, and in the case $m=2$ can be simplified significantly:

\begin{prop}
$$\begin{array}{rl}
\Gamma_1(2) & \!\!\! = \  (\Z/ 2\Z) \wr \Z   \ = \ \left\langle  \ \lambda, \mu \ \left| \   \left(\lambda^k \mu^{-k}\right)^2  = 1 \ (k \in \Z)  \
 \right. \right\rangle, \\
\Gamma_1(m) & \!\!\!  = \   (\Z/ m\Z) \wr \Z  \\  
& \!\!\!   = \ \left\langle  \ \lambda_0,   \ldots, \lambda_{m-1} \ \left| \  {\lambda_i}^k{\lambda_j}^{-k} =  {\lambda_{-j}}^k{\lambda_{-i}}^{-k}, \  \left({\lambda_i}^k{\lambda_j}^{-k}\right)^m=1 \ (i, j \in \Z/m\Z, \  k \in \Z)  \ \right. \right\rangle,
 \end{array}$$
where $m \geq 2$, $\lambda=t$, $\mu=at$, and $\lambda_i = a^it$.   
\end{prop}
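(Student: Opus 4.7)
The plan is to deduce both presentations from Proposition~\ref{the presentations} by adjoining the torsion relation $a^m=1$ to a suitable presentation of $\Gamma_1$, and then to observe a remarkable collapse of the relations when $m=2$.

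For the general $\Gamma_1(m)$ case, I would start from the presentation $\langle a,t \mid a^m=1,\ [a,a^{t^k}]=1\ (k\in\Z)\rangle$ obtained by adjoining $a^m=1$ to Proposition~\ref{the presentations}(i). Under the substitution $\lambda_i = a^i t$, the relation $a^m=1$ gives $\lambda_{i+m}=\lambda_i$, so the generators re-index by $\Z/m\Z$ and $\lambda_0,\ldots,\lambda_{m-1}$ suffice. The commutation-type relations $\lambda_i^k \lambda_j^{-k} = \lambda_{-j}^k \lambda_{-i}^{-k}$ are inherited from Proposition~\ref{the presentations}(iii) since $\Gamma_1(m)$ is a quotient of $\Gamma_1$. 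For the torsion-type relations, the key calculation, carried out via the matrix representation from the proof of Proposition~\ref{the presentations}(iii), is
$$\lambda_i^k \lambda_j^{-k} \ \longleftrightarrow \ \left(\begin{array}{cc}1 & (i-j)(1+x+\cdots+x^{k-1}) \\ 0 & 1\end{array}\right),$$
whose $m$-th power corresponds to $m(i-j)(1+x+\cdots+x^{k-1}) = 0$ in $(\Z/m\Z)[x,x^{-1}]$. Conversely, the instance $(i,j,k)=(0,-1,1)$ reads $(\lambda_0 \lambda_{-1}^{-1})^m=1$, and a direct matrix check gives $\lambda_0\lambda_{-1}^{-1}=a$, so this single torsion relation recovers $a^m=1$. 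Hence adjoining the stated torsion relations to the presentation in Proposition~\ref{the presentations}(iii) yields precisely $\Gamma_1(m)$.

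For $m=2$, the novel feature is that the two families of relations collapse to one. Because $-i\equiv i$ in $\Z/2\Z$, the commutation-type relation $\lambda_i^k\lambda_j^{-k}=\lambda_{-j}^k\lambda_{-i}^{-k}$ is trivial when $i=j$, and writing $\lambda=\lambda_0,\mu=\lambda_1$, it reduces to $\lambda^k\mu^{-k}=\mu^k\lambda^{-k}$ when $\{i,j\}=\{0,1\}$. Since $\mu^k\lambda^{-k}=(\lambda^k\mu^{-k})^{-1}$, this is equivalent to the torsion relation $(\lambda^k\mu^{-k})^2=1$, so both families merge into one. The main step I expect to be subtle is verifying that this single family also recovers $a^2=1$ (so that the presentation really produces $\Gamma_1(2)$ rather than some larger group): taking $k=1$, the relation reads $(\lambda\mu^{-1})^2=1$, and since $\lambda\mu^{-1}=a^{-1}$ by a direct matrix computation, this yields $a^{-2}=1$, i.e., $a^2=1$, as desired.
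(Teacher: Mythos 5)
Your proof is correct. For general $m$ it follows essentially the paper's route: adjoin the torsion relations $\left({\lambda_i}^k{\lambda_j}^{-k}\right)^m=1$ to Presentation~(iii) of Proposition~\ref{the presentations}, observe that one instance recovers $a^m=1$ (you use $(i,j,k)=(0,-1,1)$, giving $\lambda_0\lambda_{-1}^{-1}=a$; the paper uses $(1,0,1)$, giving $\lambda_1\lambda_0^{-1}=a$), check that all the added relations already hold in $\Gamma_1(m)$ so that the quotient is exactly $\Gamma_1/\langle\langle a^m\rangle\rangle$, and then re-index the generators by $\Z/m\Z$. The two points where you genuinely diverge are these. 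First, you verify that the torsion relations hold in $\Gamma_1(m)$ via the matrix identity ${\lambda_i}^k{\lambda_j}^{-k}\leftrightarrow\left(\begin{smallmatrix}1 & (i-j)(1+x+\cdots+x^{k-1})\\ 0 & 1\end{smallmatrix}\right)$, whose $m$-th power dies over $(\Z/m\Z)[x,x^{-1}]$; the paper instead expands the word $\left((a^it)^k(a^jt)^{-k}\right)^m$ as a product of commuting conjugates $a^{t^p}$, each of order dividing $m$. Both are valid; yours is arguably cleaner since it reuses the faithful matrix model already set up in the proof of Proposition~\ref{the presentations}. Second, for $m=2$ you obtain the two-generator presentation by specializing the general $\Z/m\Z$-indexed presentation and noting that, since $-1\equiv 1$, the commutation family $\lambda^k\mu^{-k}=\mu^k\lambda^{-k}$ and the torsion family $(\lambda^k\mu^{-k})^2=1$ coincide; the paper instead derives the $m=2$ case directly from Presentation~(ii), substituting $a^2=1$ in the form $\lambda^{-1}\mu\lambda^{-1}=\mu^{-1}$ into the relators $\lambda^k(\lambda^{-1}\mu\lambda^{-1})^k=\mu^k\lambda^{-k}$. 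Your route has the merit of deducing the $m=2$ statement as a corollary of the general one rather than giving it an independent argument, and your closing check that $k=1$ still yields $a^2=1$ correctly addresses the only point at which such a collapse could lose information.
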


\begin{proof}
The presentation for $\Gamma_1(2)$ comes from simplifying Presentation~\emph{\ref{ii}} of Proposition~\emph{\ref{the presentations}} using the relation $a^2=1$, which is equivalent to $\lambda^{-1}\mu\lambda^{-1}=\mu^{-1}$.  The family  $\lambda^k (\lambda^{-1} \mu \lambda^{-1})^k = \mu^k \lambda^{-k}$ becomes the family $(\lambda^k \mu^{-k})^2 = 1$.  The case $k=1$ provides the relation $a^2=1$.    

For   $\Gamma_1(m)$, consider adding the  family of relations $\left({\lambda_i}^k{\lambda_j}^{-k}\right)^m=1$ for all $i,j,k\in \Z$  to Presentation~\emph{\ref{iii}} of Proposition~\ref{the presentations}.  In particular this adds the relation $a^m=1$, which is the case:   $\left({\lambda_1}{\lambda_0}^{-1}\right)^m= \left( at \, t^{-1} \right)^m =   1$.  In the resulting group $\lambda_i = \lambda_j$ when $i =j$ modulo $m$ since then $a^it = a^jt$ because $a^m=1$.  This group must be $\Gamma_1(m)$ because all the remaining added relations hold in $\Gamma_1(m)$, after all when $k >0$ (and similarly when $k<0$), 
\begin{align*}
\left({\lambda_i}^k{\lambda_j}^{-k}\right)^m \ & = \ \left( (a^it)^k (a^j t)^{-k} \right)^m \\
&  = \  \left( a^i (t a^i t^{-1}) \cdots (t^{k-2} a^i t^{-(k-2)}) (t^{k-1} a^{i-j} t^{-(k-1)}) (t^{k-2} a^{-j} t^{-(k-2)}) \cdots (ta^{-j} t^{-1}) a^{-j} \right)^m \end{align*}
which is $1$ because $\left( a^{t^p} \right)^m =1$ and $a^{t^p}$ and $a^{t^q}$ commute in $\Gamma_1(m)$ for all $p,q \in \Z$.
\end{proof}

\begin{prop} \label{the presentations for gamma 2}
Presentations of     
$$\Gamma_2 \ = \ \set{  \ \left. \left(\begin{array}{cc}x^{k}(1+x)^{l}  & f \\0 & 1\end{array}\right) \  \right| \  k, l  \in \Z, \ f \in \Z\left[x,x^{-1}, (1+x)^{-1}\right] \  }$$  
 include 
\begin{enumerate}[label=(\roman*)]
\item $ \left\langle \ a,s,t \ \left| \   [a,a^t]=1, \ [s,t]=1, \ a^s=aa^t \ \right. \right\rangle$, 
 \label{I}
\item  $ \left\langle \ \mu,\nu,c,d \ \left| \   [\mu,\nu]=1, \ \mu^{-1}c^2\nu=c, \ \nu^{-1}d^2\mu=d \
\right. \right\rangle$,  \label{II}
\item    $ \left\langle \ \lambda_i, \mu_i, \nu_i \  (i \in \Z)   \ \left| \   \lambda_i = \nu_i \mu_i, \  \lambda_{i+j} = \mu_i\nu_j  \  (i,j \in \Z)  \ \right. \right\rangle.$ \label{III}
\end{enumerate}
These are related by  
$$a \mapsto     \left(\begin{array}{cc}1 & 1 \\0 & 1\end{array}\right), \qquad t  \mapsto \left(\begin{array}{cc} x & 0 \\0 & 1\end{array}\right), \qquad s  \mapsto \left(\begin{array}{cc} 1+ x & 0 \\0 & 1\end{array}\right),$$
$\mu=s$, $\nu=t^{-1}s$, $c=at$, and $d=t^{-1}a$, and 
  $\lambda_i = a^i t$ and $\mu_i=a^i s$ (and hence $\nu_i = \lambda_i {\mu_i}^{-1} = a^i t s^{-1} a^{-i}$).  
 \end{prop}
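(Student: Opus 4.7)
The plan is to establish each of (i), (ii), (iii) as a presentation of $\Gamma_2$ by handling (i) directly and relating (ii), (iii) to it via Tietze transformations. Since (i) is closest to the semi-direct product description, I will prove it first. The matrix images specified in the statement satisfy all three relations as immediate identities: $a$ and $a^t = tat^{-1}$ are upper-unitriangular with entries in the abelian ring $A_2(\Z)$ and so commute; $s$ and $t$ are both diagonal; and the off-diagonal entry of $sas^{-1}$ is $1+x$, matching that of $a \cdot tat^{-1}$. This yields a surjective homomorphism $\phi \colon G \twoheadrightarrow \Gamma_2$ from the group $G$ presented by (i). For injectivity, the task is to show that the normal closure $N := \langle\langle a \rangle\rangle \triangleleft G$ is abelian and that, viewed under conjugation as a module over $\Z[s^{\pm 1}, t^{\pm 1}]$, it is isomorphic to $A_2(\Z)$ with $t$ and $s$ acting as multiplication by $x$ and $1+x$; together with the obvious splitting $\Z^2 \hookrightarrow G$, this identifies $G$ with $\Gamma_2$.

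The crux is abelian-ness of $N$. Define $b := a^{s^{-1}}$; applying $s^{-1}$ to $a^s = aa^t$ and using $[s,t]=1$ gives the key identity $a = b \cdot b^t$, while conjugating $[a, a^t]=1$ by $s^{-1}$ gives $[b, b^t]=1$. I would then derive $[a, a^{t^k}]=1$ for $k \geq 2$ by induction, with base case $[a, a^t]=1$ given: rewrite the previously-established $[a, a^{t^{k-1}}]=1$ using $a = bb^t$ and $a^{t^{k-1}} = b^{t^{k-1}}b^{t^k}$ as $[bb^t,\, b^{t^{k-1}}b^{t^k}]=1$, then manipulate using $[b, b^t]=1$, the inductively-known commutators $[b, b^{t^i}]=1$ for $1 \leq i < k$, and their $t$-translates $[b^{t^j}, b^{t^{j+i}}]=1$; a short calculation isolates $[b, b^{t^k}]=1$, and conjugating by $s$ (noting $b^s = a$) gives $[a, a^{t^k}]=1$. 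The case $k<0$ then follows by symmetry. With $[a^{t^i}, a^{t^j}]=1$ in hand for all $i, j$, iterating $a^s = aa^t$ expresses every $a^{s^j t^i}$ as a word in $\{a^{t^i}\}_{i \in \Z}$, so $N$ is abelian and its $\Z[s^{\pm 1}, t^{\pm 1}]$-module structure matches $A_2(\Z)$ exactly.

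For presentations (ii) and (iii), I would verify the specified substitutions directly and exhibit inverse substitutions establishing Tietze equivalence with (i). In (ii), $[\mu, \nu] = [s, t^{-1}s]=1$ follows from $[s,t]=1$, while $\mu^{-1}c^2\nu = c$ unfolds to $s^{-1}(at)(at)(t^{-1}s) = s^{-1}a(ta)s$, which upon pushing $s^{\pm 1}$ through using $b = a^{s^{-1}}$ and $[s,t]=1$ reduces to $btb = at$, equivalent to $bb^t = a$; the remaining relation $\nu^{-1}d^2\mu = d$ is symmetric. The substitution inverts via $s = \mu$, $t = \nu^{-1}\mu$, $a = c\mu^{-1}\nu$, under which the relations of (i) follow. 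For (iii), $\lambda_i = \nu_i\mu_i$ is automatic from $\nu_i := \lambda_i\mu_i^{-1}$, while $\lambda_{i+j} = \mu_i\nu_j$ expands to $a^{i+j}t = (a^is)(a^jts^{-1}a^{-j})$ and simplifies via $sa^js^{-1} = (aa^t)^j = a^j(a^t)^j$ (using only $[a, a^t]=1$) and $(a^t)^jt = ta^j$ to give $a^{i+j}t$. Inverting via $t = \lambda_0$, $s = \mu_0$, $a = \lambda_1\lambda_0^{-1}$ recovers the generators of (i), and the relations are readily checked. The main obstacle throughout is the induction in paragraph two: leveraging the coupling identity $a = bb^t$ to promote the single commutator $[a, a^t]=1$ to the full family $[a, a^{t^k}]=1$, where the presence of the $s$-generator is indispensable.
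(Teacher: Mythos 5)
Your proposal is correct and, at its core, follows the same route as the paper. The engine is identical: the mitosis relation $a^s=aa^t$ together with $[s,t]=1$ promotes the single relation $[a,a^t]=1$ to the whole family $[a,a^{t^k}]=1$, and your induction via $b=a^{s^{-1}}$ and $a=bb^t$ is exactly the paper's computation $1=[a,a^{t^n}]^s=[aa^t,\,a^{t^n}a^{t^{n+1}}]=[a,a^{t^{n+1}}]$ read through a conjugate; likewise your Tietze verifications for (ii) and (iii) (reducing $\mu^{-1}c^2\nu=c$ and $\nu^{-1}d^2\mu=d$ to $a^s=aa^t$ and $a^s=a^ta$, and expanding $a^isa^jts^{-1}a^{-j}$ to $a^{i+j}t$) are the paper's. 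Where you genuinely differ is in concluding injectivity for (i): the paper produces an explicit normal form (its Lemma~\ref{normal form lemma}) and appeals to the linear independence of $\{x^i,(1+x)^j \mid i,j\in\Z,\ j<0\}$ in $\Z[x,x^{-1},(1+x)^{-1}]$, whereas you present $N=\langle\langle a\rangle\rangle$ as a cyclic $\Z[s^{\pm1},t^{\pm1}]$-module with the single defining relation $s\cdot a=(1+t)\cdot a$ and identify it with $A_2(\Z)$. Your finish is a little slicker; the paper's buys the normal form, which it uses elsewhere. One sentence of yours is literally false: iterating $a^s=aa^t$ does \emph{not} express $a^{s^jt^i}$ in terms of the $a^{t^i}$ when $j<0$ (in $\Gamma_2$ that would assert $(1+x)^{-1}\in\Z[x,x^{-1}]$). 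The paper commits the same over-claim; the standard repair---to see that two conjugates $a^{s^ju}$ and $a^{s^{j'}u'}$ commute, conjugate the putative commutator by a high power of $s$ so that both factors land in the subgroup generated by the $a^{t^i}$, which is already known to be abelian, then conjugate back---is routine, so this is terseness rather than a gap.
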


 The generators $\lambda_i$, $\mu_i$, and $\nu_i := \lambda_i {\mu_i}^{-1}$ agree with those employed in  Section~\ref{rank 2}. After all, 
 $$\lambda_i  \ = \  a^it \  \mapsto   \  \left(\begin{array}{cc}x & i \\0 & 1\end{array}\right) \qquad \text{and} \qquad \mu_i \ = \  a^is \  \mapsto  \   \left(\begin{array}{cc}1+x & i \\0 & 1\end{array}\right),$$ which 
   are alternative ways of expressing $(i, \e_0)$ and  $(i, \e_1)$.

 Presentation~\emph{\ref{I}} and the given matrix representation are due to Baumslag in \cite{Baumslag} and our proof below that they agree is an embellishment of the argument in his paper.   Presentation~\emph{\ref{II}} is striking as it shows that $\Gamma_2$ maps onto a free--product with amalgamation of two $\textup{BS}(1,2)$ groups (via identifying $\mu$ and $\nu$).   
   The generators of Presentation~\emph{\ref{III}} are those we used in Sections~\ref{rank 2} and \ref{rank n} to relate $\Gamma_2$ to a horocyclic product of trees. In Section~4 of \cite{GKKL}, presentations of similar matrix groups  are given (e.g.\ in Section~4.3.1) using techniques that are similar to those that follow and are based on ideas in \cite{Baumslag}.

In the course of proving Proposition \ref{the presentations for gamma 2} we will also establish:

\begin{lemma}[\emph{Normal form}] \label{normal form lemma} Elements  $g$ in   $\Gamma_2$,  presented as \textit{\ref{i}}, are represented by a unique word  
\begin{equation} \label{normal form word}
w_g \ =  \ {a^{m_1}}^{ t^{k_1}} \cdots {a^{m_K}}^{ t^{k_K}} \  {a^{n_1}}^{ s^{l_1}} \cdots {a^{n_{L}}}^{ s^{l_{L}}}   \  s^l t^k
\end{equation}
with $k_1, \ldots, k_K,  l_1, \ldots,  l_L, l,k, L, K \in \Z$ and  $m_1,  \ldots, m_K, n_1, \ldots, n_L \in \Z \ssm \set{0}$ satisfying  $k_1 <  \cdots < k_K$ and  $l_1 <  \ldots < l_L < 0$.     
\end{lemma}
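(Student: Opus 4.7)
The plan is to pass through the matrix realization of $\Gamma_2$ given in Proposition \ref{the presentations for gamma 2} and then invoke the $n=2$ case of Lemma \ref{basis lemma}. Under that homomorphism, each factor $t^{k_i} a^{m_i} t^{-k_i}$ is sent to the upper-unitriangular matrix with $(1,2)$-entry $m_i x^{k_i}$, each factor $s^{l_j} a^{n_j} s^{-l_j}$ to the upper-unitriangular matrix with $(1,2)$-entry $n_j(1+x)^{l_j}$, and $s^l t^k$ to the diagonal matrix whose $(1,1)$-entry is $(1+x)^l x^k$. Since upper-unitriangular $2 \times 2$ matrices over $A_2(\Z)$ commute and add in their $(1,2)$-entries, the word \eqref{normal form word} is sent to
$$\begin{pmatrix} (1+x)^l x^k & f \\ 0 & 1 \end{pmatrix}, \qquad \text{where } \ \ f \ = \ \sum_{i=1}^K m_i x^{k_i}  \ + \  \sum_{j=1}^L n_j (1+x)^{l_j}.$$

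For existence, take any $g \in \Gamma_2$ represented by such a matrix. Lemma \ref{basis lemma} with $n=2$ asserts that $\{1\} \cup \{x^{\pm j} : j \geq 1\} \cup \{(1+x)^{-j} : j \geq 1\}$ is a $\Z$-basis of $A_2(\Z)$, so $f$ admits a unique expansion of the displayed shape with $m_i, n_j \in \Z \ssm \{0\}$, with $k_1 < \cdots < k_K$ in $\Z$, and with $l_1 < \cdots < l_L < 0$. Reading these data off, together with the $s^l t^k$ forced by the $(1,1)$-entry, yields a word of the form \eqref{normal form word} representing $g$. For uniqueness, suppose two normal-form words represent the same matrix: comparing $(1,1)$-entries gives $(1+x)^l x^k = (1+x)^{l'} x^{k'}$ in the integral domain $A_2(\Z) \subset \Q(x)$, and reading off orders of vanishing at $x = 0$ and $x = -1$ forces $(k, l) = (k', l')$; comparing $(1,2)$-entries then gives $f = f'$, whereupon the uniqueness half of Lemma \ref{basis lemma} matches up the remaining data $(K, m_i, k_i)$ and $(L, n_j, l_j)$.

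The one non-bookkeeping point is that all the upper-unitriangular factors in $w_g$ really do commute; this is immediate from the entrywise product formula for upper-unitriangular $2 \times 2$ matrices, or, more intrinsically, from the observation that these factors lie in the abelian derived subgroup $A_2(\Z)$ of $\Gamma_2$. Once commutativity is in hand, the entire lemma reduces to invoking Lemma \ref{basis lemma}.
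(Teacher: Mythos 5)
Your uniqueness argument is essentially the paper's: map to the matrix group and invoke the fact that $\set{x^i \mid i \in \Z} \cup \set{(1+x)^j \mid j<0}$ is a basis of $\Z[x,x^{-1},(1+x)^{-1}]$ over $\Z$. The gap is in the existence half. The lemma is a statement about the group \emph{presented by} (i) of Proposition~\ref{the presentations for gamma 2}: every word in $a,s,t$ must be convertible to the normal form \eqref{normal form word} \emph{using only the relations} $[a,a^t]=1$, $[s,t]=1$, $a^s=aa^t$. Your existence argument instead starts from a matrix, expands its $(1,2)$-entry in the basis, and reads off a word mapping to that matrix. That only shows the normal-form words surject onto the matrix group. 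To conclude that such a word equals $g$ \emph{in the presented group} you need the homomorphism $\phi$ from the presented group to the matrix group to be injective --- but in this paper that injectivity is itself deduced from the existence of normal forms (given two words with the same image, rewrite both into normal form and compare), so your argument is circular as a proof of the lemma in the form the paper needs it. Likewise, your appeal to ``the abelian derived subgroup $A_2(\Z)$ of $\Gamma_2$'' to justify commuting the factors presupposes the very isomorphism being established.

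The missing content is the rewriting argument: (1) use $[s,t]=1$ to collect a word into the shape $\prod_i a^{s^{p_i}t^{q_i}}\, s^l t^k$; (2) eliminate positive powers $a^{s^{p}}$ by repeatedly applying the mitosis relation $a^s=aa^t$ to expand them as products of conjugates $a^{t^j}$; and (3) prove from the given relations that the conjugates $a^{s^i}$ and $a^{t^j}$ pairwise commute, which requires showing $[a,a^{t^n}]=1$ for \emph{all} $n$, via the induction $1=[a,a^{t^n}]^s=[aa^t,\,a^{t^n}a^{t^{n+1}}]=[a,a^{t^{n+1}}]$. Only after this can the factors be reordered and merged into the form \eqref{normal form word}. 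This step is the heart of why the finite presentation works, and it cannot be replaced by a computation in the matrix group.
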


\begin{proof}[Proof of  Proposition~\ref{the presentations for gamma 2} and Lemma~\ref{normal form lemma}.]
Let us establish the existence part of Lemma~\ref{normal form lemma}.  Suppose $w$ is any word on $a, s,t$ representing $g$.  
First convert $w$ to a word of the form $\prod_i a^{s^{p_i} t^{q_i}} s^l t^k$ by inserting suitable words on $\{s^{\pm 1}, t^{\pm 1}\}$   after each $a$ and then using the relation $[s,t]=1$.      
Then eliminate all the positive $p_i$ by expressing $a^{s^{p_i}}$ as a product of terms like $a^{t^j}$ using the relation $a^s = a a^t$.  
In $\Gamma_2$,  $[a, a^{t^n}] =1$ for all $n \geq 0$ as can be seen by an induction via $$1 \ = \ \left[a, a^{t^n}\right]^s \ = \ \left[a^s, (a^s)^{t^n}\right] \ = \  \left[aa^t, a^{t^n} a^{t^{n+1}}\right] \ = \ \left[a ,  a^{t^{n+1}}\right].$$ 
(We see here that the relation $a^s = a a^t$, which Baumslag calls \emph{mitosis}, is the key to coding the infinite family of defining relators  $\left[a,a^{t^n}\right]=1$ ($n \in \Z$) in a finite presentation.)
So, as $a^{s^i}$ can be expressed as a product of  terms of the form $a^{t^j}$ ($j \in \Z$), elements of the set  $\set{a^{s^i},  a^{t^j},   \mid i, j \in \Z }$ pairwise commute in $\Gamma_2$.   So we can rearrange terms to get the form of $w_g$.  
 
Next we observe that the map $\phi$ from the group presented by  \emph{\ref{i}} to the given matrix group, defined for $a$, $s$ and $t$ as indicated in the proposition,   is well-defined and is a homomorphism: the defining relations correspond to identities which hold in the matrix group.    It maps a group element $g$ represented by the word $w_g$ of Lemma~\ref{normal form lemma}   to 
 \begin{equation} \label{semi-direct}
 \left(\begin{array}{cc}1 &  f  \\0 & 1\end{array}\right) \left(\begin{array}{cc}x^k (1+x)^l & 0 \\0 & 1\end{array}\right)  \ =  \   \left(\begin{array}{cc}x^k (1+x)^l & f \\0 & 1\end{array}\right) ,
 \end{equation}  
 where $$f  \ =  \ m_1x^{k_1} + \cdots +  m_K x^{k_K} +  n_1(1+x)^{l_1} + \cdots +  n_L (1+x)^{l_L}.$$
So $\phi$ is surjective.  Now $\set{x^i, (1+x)^j \mid i,j \in \Z,  \ j <0 }$ is a basis for $\Z[x,x^{-1}, (1+x)^{-1}]$ as we saw in Section~\ref{Gamma_2 lamplighter}.  So $\phi$ is also injective and   the normal form words of Lemma~\ref{normal form lemma} each represent different group elements.  So  \emph{\ref{i}} is a presentation of $\Gamma_2$.

The translation between Presentations \emph{\ref{i}} and \emph{\ref{ii}} comes from that the relations $[s,t]=1$ and $[\mu,\nu]=1$ are  equivalent, and, in the presence of that commutator,  $\mu^{-1}c^2\nu=c$ and $\nu^{-1}d^2\mu=d$ are equivalent to $a^s=aa^t$ and $a^s=a^ta$, respectively.

Presentations \emph{\ref{i}} and \emph{\ref{iii}} agree as follows.   When $i=j=0$, the relation  $\lambda_{i+j} = \mu_i\nu_j$ becomes $[s,t]=1$, and, in the presence of $[s,t]=1$, when  $i=-j=1$,  it gives $a^s=a^t a$, and when    $-i=j=1$,  it gives $a^s=aa^t$.  Moreover, in terms of $a,s,t$ the relation $\lambda_{i+j} = \mu_i\nu_j$ is $a^{i+j}t = a^i s a^j t s^{-1} a^{-j}$, which holds in $\Gamma_2$ because  $a^i s a^j t s^{-1} a^{-j} t^{-1} a^{-i-j}   = a^i (s a  s^{-1})^j t a^{-j} t^{-1} a^{-i-j}  =  a^i (  a  a^t)^j  a^{-jt}   a^{-i-j}  =  a^{i+j}  a^{jt}  a^{-jt}   a^{-i-j}  = 1$.   
\end{proof}

The normal-form words of Lemma~\ref{normal form lemma} read off  lamplighter descriptions of group elements in which the configurations are supported on $L_{0,0}$ (that is, the $t$-axis and the negative half  of  the $s$-axis).   If a group  element $g$ positions the lamplighter far from $L_{0,0}$, then the    configuration supported on $L_{0,0}$ representing $g$ will differ dramatically from that representing $ga^{\pm 1}$, since the effect of propagating $\pm 1$ towards $L_{0,0}$ compounds in the manner of  Pascal's triangle.  
 
 A word on $a, s, t$ as per Presentation \emph{\ref{i}} for $\Gamma_2$ represents a group element whose   lamplighter description can be found as follows.  Start with the lamplighter located at $(0,0)$ and the configuration entirely zeroes.  Working through $w$ from left to right, increment the integer at the lamplighter's location by $\pm 1$ on reading an $a^{\pm 1}$, move the lamplighter one step to the right or left (the $t$-  or $t^{-1}$-direction) on reading a $t$ or $t^{-1}$, respectively, and move  the  lamplighter one step to the adjacent vertex in the $s$-  or $s^{-1}$-direction   on reading an $s$ or $s^{-1}$, respectively.
  
 For presentations of the groups $\Gamma_n(m)$ in general see Theorem~4.7 in \cite{BNW}.

\section{$\mathcal{H}_2(\Z)$ as a Cayley 2--complex}   \label{Cayley complex section}
  
In this section we  show that $\mathcal{H}_2(\Z)$ is the Cayley 2-complex of  
$$\Gamma_2 \ = \ \left\langle \ \lambda_i, \mu_i, \nu_i \  (i \in \Z)   \ \left| \   \lambda_i = \nu_i \mu_i, \  \lambda_{i+j} = \mu_i\nu_j  \  (i,j \in \Z)  \ \right. \right\rangle,$$ proving Theorem~\ref{Cayley complex}. 

Identify the Cayley graph (the 1-skeleton of the Cayley 2-complex)  with the 1-skeleton of  $\mathcal{H}_2(\Z)$  as per the  $n=2$ case of Theorem~\ref{main} (proved in Section~\ref{rank 2}).

First  we show that every $2$-cell in $\mathcal{H}_2(\Z)$ is bounded by an edge-loop which corresponds to a defining relation of $\Gamma_2$.
Suppose a point $\mathbf{p} = (p_0, p_1, p_2) \in  \mathcal{H}_2(\Z)$ is in the interior of a  2-cell $X$.  Then each $p_j$ is in the interior of an edge $I_j$ of the tree $\mathcal{T}_{\Z}$.   Let $\ell_j =   \min_{u \, \in \, I_j }  h(u)$ and  $x_j = h(p_j) - \ell_j$ for $j=0,1,2$.   It follows from  $h(p_0) + h(p_1) + h(p_2) = 0$  and $0<x_j<1$  that  $\ell_0 + \ell_1 + \ell_2$ is  either $-1$ or $-2$.  So $x_0 + x_1 + x_2$  is $1$ or $2$.  Say $X$ is of ``type 1'' or ``2'' accordingly.   Examples are shown in Figure~\ref{Cells fig} (with the vertices of the triangles  labeled by $(x_0, x_1, x_2)$-coordinates).

\begin{figure}[ht]
  \psfrag{S}{\small{Type $1$ example  with $(\ell_0, \ell_1, \ell_2) = (-1,0,0)$}}
    \psfrag{T}{\small{Type $2$ example with $(\ell_0, \ell_1, \ell_2) = (0,-1,-1)$}}
  \psfrag{p}{\small{$p$}}
  \psfrag{1}{\small{$1$}}
  \psfrag{0}{\small{$0$}}
    \psfrag{m}{\small{$-1$}}
  \psfrag{a}{\small{$p_0 =  - \frac{3}{4}$}}
  \psfrag{b}{\small{$p_1 =  \frac{1}{4}$}}
  \psfrag{c}{\small{$p_2 = \frac{1}{2}$}}
  \psfrag{x}{\small{$p_0 = \frac{3}{4} $}}
  \psfrag{y}{\small{$p_1=  -\frac{1}{4}$}}
  \psfrag{z}{\small{$p_2 = -\frac{1}{2}$}}
  \psfrag{i}{\small{$(1,0,0)$}}
  \psfrag{j}{\small{$(0,1,0)$}}
  \psfrag{k}{\small{$(0,0,1)$}}
    \psfrag{I}{\small{$(0,1,1)$}}
  \psfrag{J}{\small{$(1,0,1)$}}
  \psfrag{K}{\small{$(1,1,0)$}}
  \psfrag{L}{\small{$\lambda_{i+j}$}}
  \psfrag{M}{\small{$\mu_i$}}
  \psfrag{N}{\small{$\nu_j$}}
    \psfrag{B}{\small{$\nu_i$}}

  \psfrag{l}{$\lambda_{i}$}

 \centerline{\epsfig{file=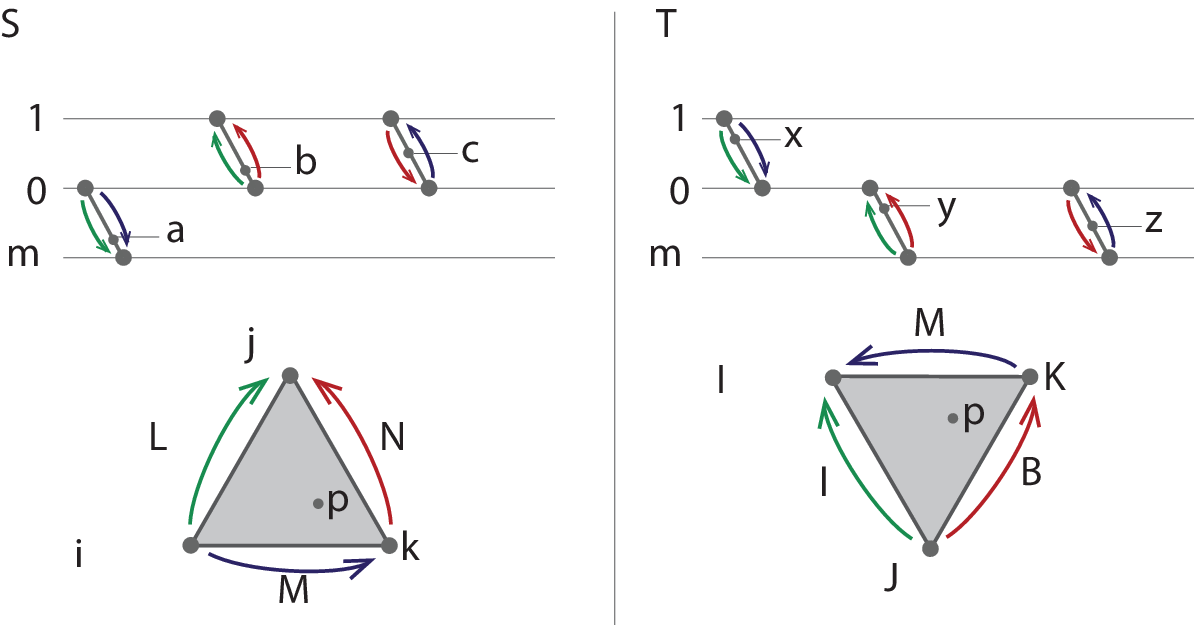}} \caption{Examples of 2-cells of type 1 and 2} \label{Cells fig}
\end{figure}

Consider moving  $\mathbf{p}$ within $X$ as parametrized by $(x_0,  x_1,  x_2)$.  It is on an edge in $\partial X$ when one of the $p_i$ is at an end of $I_i$ and is on a vertex when two (and hence all three) are at an end of $I_i$. So if $X$ is of type $1$, it   has  vertices, 
$(x_0,x_1,x_2)= (1,0,0)$, $(0,1,0)$, and  $(0,0,1)$, 
and  $\partial X$ is traversed by following the  edges  $(1-r,r,0)_{0 \leq r \leq 1}$, then $(0,1-r,r)_{0 \leq r \leq 1}$, and then $(r,0,1-r)_{0 \leq r \leq 1}$.    If $X$ is of type $2$, it   has  vertices, 
$(x_0,x_1,x_2)= (0,1,1)$, $(1,0,1)$, and  $(1,1,0)$, 
and  $\partial X$ is traversed by following  $(r,1-r,1)_{0 \leq r \leq 1}$, then $(1,r,1-r)_{0 \leq r \leq 1}$,  and then $(1-r,1,r)_{0 \leq r \leq 1}$.

Now, $\partial X$ corresponds to a length-$3$ relator in $\Gamma_2$, and matching the changes in heights as $\partial X$ is traversed with the height-changes indicated in the family of six displayed equations in our  proof of the $n=2$ case of Theorem~\ref{main} in Section~\ref{rank 2}, that relator must be $\lambda_k {\nu_j}^{-1}  {\mu_i}^{-1}$ for type 1, and  $\lambda_i^{-1} \nu_j  \mu_k$ for type 2,   for some $i, j, k \in \Z$.     

The workings of lamplighter model   illustrated in Figure~\ref{multiply figure}  allow us to see that  $\lambda_k \nu_j^{-1}  \mu_i^{-1}=1$ in $\Gamma_2$   if and only if $k-j-i=0$ since $\lambda_k \nu_j^{-1}  \mu_i^{-1}$ does not move the  lamplighter and  increments the lamp at the  lamplighter's location by  $k-j-i$.  That is, the relation is $\lambda_{i+j} = \mu_i \nu_j$ for some $i,j \in \Z$.      Similarly,  $\lambda_i^{-1} \nu_j  \mu_k =1$ in $\Gamma_2$   if and only if $i=j=k$ since $\lambda_i^{-1} \nu_j  \mu_k$ does not move the  lamplighter and  transforms a triangle of numbers $\T{0}{0}{0} \mapsto \T{-i}{j}{k}$ (with the lamplighter being located to the right of the $-i$).  That is, the relation is $\lambda_i =  \nu_i  \mu_i$ for some $i \in \Z$.  So around $\partial X$ we read one of the defining relations in the presentation given in the theorem.  

Finally, we show that every edge-loop in $\mathcal{H}_2(\Z)$ which corresponds to  a defining relation bounds a $2$-cell.  So suppose $\rho: S^1 \to  \mathcal{H}_2(\Z)$, given by  $r \mapsto \rho(r) = (p_0(r), p_1(r), p_2(r))$, is a loop in the 1-skeleton of $\mathcal{H}_2(\Z)$ and around $\rho$ we read one of the defining relations.  Then for each $j$, such are the defining relations,  the image of the loop $r \mapsto p_j(r)$ is in a single edge $I_j$ of $\mathcal{T}_{\Z}$  and, by a similar analysis to that above, $$\set{ \left.  \, (u_0, u_1, u_2) \in \mathcal{T}_{\Z}^3 \   \right| \   u_j \in I_j \textup{ and } h(u_0) + h(u_1) + h(u_2)=0 \, }$$ is a 2-cell of $\mathcal{H}_2(\Z)$  with boundary circuit $\rho$. 
    
So, as no edge-loop in either $\mathcal{H}_2(\Z)$ or in the Cayley 2-complex is the boundary of two 2-cells, the result it proved.

\bibliographystyle{plain}
\bibliography{$HOME/Dropbox/Bibliographies/bibli}

\small{ 
\ni  \textsc{Margarita Amchislavska } \rule{0mm}{6mm} \\
Department of Mathematics,
Cornell University, 310 Malott Hall, Ithaca, NY 14850, USA \\ \texttt{ma569@cornell.edu}}

\small{ 
\ni  \textsc{Timothy R.\ Riley} \rule{0mm}{6mm} \\
Department of Mathematics,
Cornell University, 310 Malott Hall, Ithaca, NY 14850, USA \\ \texttt{tim.riley@math.cornell.edu}, \
\href{http://www.math.cornell.edu/~riley/}{http://www.math.cornell.edu/$\sim$riley/}}

\end{document}